\newtheorem{thm}[theorem]{Theorem}
\newtheorem{propos}[theorem]{Proposition}
\newtheorem{corol}[theorem]{Corollary}
\newtheorem{coroll}[theorem]{Corollary}
\theoremstyle{definition}
\newtheorem{defin}[theorem]{Definition}
\newcommand{\whp}{{\bf whp}\ }
\newcommand{\VOL}{\mathrm{vol}}
\newcommand{\girth}{\mathrm{girth}}
\newcommand{\DIAG}{\mathrm{diag}}
\newcommand{\diam}{\mathrm{diam}}
\newcommand{\Bin}{\mathrm{Bin}}
\begin{document}
\bibliographystyle{amsplain}

\makebcctitle

\begin{abstract}
A graph $G=(V,E)$ is called an expander if every vertex subset $U$ of size up
to $|V|/2$ has an external neighborhood whose size is comparable to $|U|$. Expanders have been a subject of intensive research for more than three decades and have become one of the central notions of modern graph theory.

We first discuss the above definition of an expander and its alternatives. Then we present examples of families of expanding graphs and state basic properties of expanders. Next, we introduce a way to argue that a given graph contains a large expanding subgraph. Finally we  research properties of expanding graphs, such as existence of small separators, of cycles (including cycle lengths),  and embedding of large minors.
\end{abstract}

\section{Introduction}\label{s-intro}
Putting it somewhat informally, a graph $G$ is an expander if every vertex subset $U$ expands outside substantially, i.e.,  has an external neighborhood whose size is comparable to $|U|$. Since their introduction in the seventies, expanders have become one of the most central, and also one of the most applicable, notions of modern combinatorics. Their uses are numerous and wide-ranging, from extremal problems in graph theory and explicit constructions of graphs with desired properties to design and implementation of lean yet reliable communication networks. The reader is encouraged to consult the comprehensive and very readable survey of Hoory, Linial and Wigderson~\cite{HLW06}, devoted entirely to expanding graphs and covering many aspects of this subject; the very concise presentation of Sarnak~\cite{Sar04} would be a (somewhat\ldots) shorter introduction.

Ever since the seminal paper of Pinsker~\cite{Pin73}, randomness has been used to argue about expanders and their existence. In many cases, random graphs, sometimes after minor alterations, serve as typical examples of expanders with desired parameters. In fact, expanders are frequently viewed as synonymous to pseudo-random graphs, the latter being graphs whose edge distribution, and other properties, resemble closely those of truly random graphs with the same density. Pseudo-random graphs are covered in survey~\cite{KS06}. The notions of expanders and of pseudo-random graphs,  while being close, are quite distinct --- for example, a bipartite graph can be an excellent expander, but cannot quite model a random graph. Ideology and tools can be rather similar; in particular, spectral methods are used frequently to argue about both expansion and pseudo-randomness.

In this survey we undertake a (more) qualitative study of expansion and expanding graphs. Instead of looking for best possible quantitative results on expansion, strongest expanders possible, largest eigenvalue ratio etc., we aim here to study inherent properties of expanders, their derivation and consequences. We will rely less on the strength of expansion to derive our results; in a sense our assumption about expansion is fairly weak (we will sometimes even call our expanders weak explicitly). As it turns out, even the rather relaxed definition of expanders we embrace is deep enough to allow for meaningful study and quite interesting consequences.

The text is organized as follows. In the next section, we introduce a formal definition of expanders we use in this survey, and compare it with alternative but similar definitions. In Section~\ref{s2} we  discuss basic properties of expanding graphs. Examples of families of expanding graphs are presented in Section~\ref{s-examples}; we use the examples presented also to adjust our expectations, and to see which graph structures can and cannot be hoped to be found in weakly expanding graphs. The connection between expanders and separators is presented in Section~\ref{s-separators}. In Section~\ref{s-subgraphs} we introduce a general tool for proving the existence of a large expanding subgraph in a given graph, this tool is based on the notion of local sparseness; we discuss this notion in the context of sparse random graphs. Amongst the structures one can hope to find in expanders are long paths and cycles, and in Section~\ref{s-long_pc} we show that indeed weak expanders are rich in cycles,  in several well defined quantitative senses. Finally, in Section~\ref{s-minors} we present results about embedding large minors in expanding graphs.

\section{Definition(s) of an expander}\label{s-defs}
Let us start with introducing (the rather standard) notation used throughout this survey.
For a graph $G=(V,E)$ and a vertex set $U\subset V$ we denote by $N_G(U)$ the external neighborhood of $U$ in $G$, i.e.,
$$
N_G(U)=\{v\in V\setminus U: v\mbox{ has a neighbor in }U\}\,.
$$
We also write, for two disjoint subsets $U,W\subset V$,
$$
N_G(U,W)=\{v\in W: v\mbox{ has a neighbor in }U\}\,,
$$
thus $N_G(U)=N_G(U,V\setminus U)$. Also, $e_G(U,W)$ stands for the number of edges of $G$ between $U$ and $W$. Sometimes when the relevant graph $G$ is clear from the context, we will omit the subscript in the above notation.

We can now give the most basic definition of this survey, that of an $\alpha$-expander.
\begin{defin}\label{def-expander}
Let $G=(V,E)$ be a graph on $n$ vertices, and let $\alpha>0$. The graph $G$ is an {\em $\alpha$-expander} if $|N_G(U)|\ge \alpha |U|$ for every vertex subset $U\subset V$ with $|U|\le n/2$.
\end{defin}

It is important to note that although the above definition appears to cover only expansion of sets $W$ up to half the order of $G$, it extends further to sets whose size exceeds $n/2$. Indeed, if $|U|>n/2$, then the set $W=V-(U\cup N_G(U))$ has size $|W|<n/2$, and satisfies $N_G(W)\subseteq N_G(U)$. This allows to bound $|N_G(U)|$ from below as a function of $|U|$, when assuming that $G$ is an $\alpha$-expander.

Our primary concern in this survey is neither getting strongest or best possible expanders, nor pushing the quantitative notion of expansion to the limit. Our goal is rather different --- we mostly assume that we have a decent (one could even say weak) expander $G$, with $\alpha$ in the definition of an $\alpha$-expander being a small constant, say, $\alpha=0.01$, and aim to derive some nice properties of $G$. In principle, the expansion factor $\alpha$ in the above definition can even be allowed to be a vanishing function of the order $n$ of $G$, however here we mostly stick to the assumption $\alpha=\Theta(1)$. We should add here that throughout this survey we allow ourselves to use expressions such as ``weak expander", ``strong expander", etc. rather informally, aiming to indicate the extent of expansion of the corresponding graph; of course formal definitions such as $\alpha$-expansion should be used to measure the expansion formally and quantitatively.

The choice to stop the expansion assumption at $n/2$ in Definition~\ref{def-expander} may appear somewhat arbitrary. As we will see shortly, this is certainly a convenient compromise, as it allows us to derive easily some basic properties of $G$, for example its connectedness. It is quite natural though to give also more general definitions, like the ones below.
\begin{defin}\label{def-expander2}
Let $G=(V,E)$ be a graph, let $I$ be a set of positive integers, and let $\alpha>0$. The graph $G$ is an {\em $(I,\alpha)$-expander} if $|N_G(U)|\ge \alpha |U|$ for every vertex subset $U\subset V$ satisfying $|U|\in I$.
\end{defin}
\begin{defin}\label{def-expander3}
Let $G=(V,E)$ be a graph, let $k$ be a positive integer, and let $\alpha>0$. The graph $G$ is a {\em $(k,\alpha)$-expander} if $|N_G(U)|\ge \alpha |U|$ for every vertex subset $U\subset V$ satisfying $|U|\le k$.
\end{defin}
Notice, mostly to exercise the above defined notions, that an $\alpha$-expander $G$ on $n$ vertices is (exactly) an $(I,\alpha)$-expander for $I=\{1,\ldots,n/2\}$, and a $(k,\alpha)$-expander for $k=n/2$\footnote{Here and later we allow ourselves to treat rounding issues somewhat casually.}.

Are the above three notions of expansion radically different? Does an $(I,\alpha)$-expander on $n$ vertices with $I$ being just a single value, say, $I=\{n/100\}$ have properties much different from an $(n/300,\alpha)$-expander, or from an $\alpha$-expander? Well, yes in some senses (for example, the former two are easily seen not to guarantee connectedness, while the latter one does). However, since in this survey we will mostly research qualitative properties and will frequently settle for finding substructures in $G$ of size linear in $n$, without trying too much to optimize the constants involved, in many other senses the above three definitions are morally equivalent. We will provide a formal statement supporting this paradigm shortly.

Definition~\ref{def-expander} is a fairly commonly used notion of an expander, see, e.g.,~\cite{Alo86}, or~\cite[Chapter 9]{AS}. For the case of $d$-regular graphs with constant $d$, or more generally, for graphs with bounded maximum degree, being an $\alpha$-expander is essentially equivalent (qualitatively) to having edge expansion bounded away from 0. The latter notion can be quantified through the so-called Cheeger constant of a graph $G$.  For a graph $G=(V,E)$ on $n$ vertices and a subset $U\subseteq V$ set $\VOL_G(U)=\sum_{v\in U}\deg_G(v)$.
Define now
$$
h(G)= \min_{\emptyset\ne U\subsetneq V}\, \frac{e_G(U,V\setminus U)}{\min(\VOL_G(U),\VOL_G(V\setminus U))}\,,
$$
the quantity $h(G)$ is called the {\em Cheeger constant} of $G$. (See, e.g.,~\cite{Chung-book} for a general discussion.) Having $h(G)$ bounded away from 0 means the graph is a decent edge expander, and --- assuming its maximum degree is bounded  --- is a decent vertex expander as well.

Let us now address the quantitative comparison between Definitions~\ref{def-expander},~\ref{def-expander2} and~\ref{def-expander3}. As announced, we will see that they are essentially equivalent.

\begin{lemma}\label{lem1}
Let $G$ be an $(I,\alpha)$-expander on $n$ vertices with $I=\left[\frac{n}{3}, \frac{2n}{3}\right]$. Then there is a vertex subset $Z\subset V(G)$ of size $|Z|< n/3$ such that the graph $G'=G[V\setminus Z]$ is an $\alpha$-expander.
\end{lemma}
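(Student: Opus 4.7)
The plan is to remove the obstructions to $\alpha$-expansion in $G$ in one shot. Concretely, I would let $Z\subseteq V$ be a subset of maximum cardinality subject to $|Z|<n/3$ and $|N_G(Z)|<\alpha|Z|$; if no non-empty such set exists, take $Z=\emptyset$. This ``maximum witness of non-expansion below the threshold $n/3$'' is the natural candidate: it absorbs every small bad set, and its own size is automatically bounded by $n/3$.

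Next I would verify that $G'=G[V\setminus Z]$ is an $\alpha$-expander. Let $n'=n-|Z|$, and pick any non-empty $U\subseteq V\setminus Z$ with $|U|\le n'/2$. The main trick is to look at $W=Z\cup U$ back in $G$. A direct computation gives
\[
|W|=|Z|+|U|\le |Z|+\tfrac{n-|Z|}{2}=\tfrac{n+|Z|}{2}<\tfrac{n+n/3}{2}=\tfrac{2n}{3},
\]
so $|W|<2n/3$ automatically. I would then split on the size of $W$. If $|W|\ge n/3$, then $|W|\in I$ and the $(I,\alpha)$-expansion hypothesis yields $|N_G(W)|\ge \alpha|W|$. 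If instead $|W|<n/3$, then $W$ is a candidate for the defining property of $Z$, and since $|W|>|Z|$ the maximality of $Z$ forces $|N_G(W)|\ge \alpha|W|$ as well. Either way, $|N_G(W)|\ge\alpha|W|=\alpha(|Z|+|U|)$.

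To conclude, I would use the containment
\[
N_G(Z\cup U)\subseteq (N_G(Z)\setminus U)\cup N_{G'}(U),
\]
which holds because any external neighbor of $Z\cup U$ lies outside $Z\cup U$ and is either in $N_G(Z)$ or is an external neighbor of $U$ inside $V\setminus Z$. Thus
\[
|N_{G'}(U)|\ge |N_G(Z\cup U)|-|N_G(Z)|\ge \alpha(|Z|+|U|)-\alpha|Z|=\alpha|U|,
\]
using $|N_G(Z)|\le\alpha|Z|$ (strict if $Z\ne\emptyset$, trivially true if $Z=\emptyset$). This gives the desired expansion of $G'$.

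The only delicate point I anticipate is the bookkeeping around the two edge cases $Z=\emptyset$ and $|U|+|Z|<n/3$; the former needs the convention $|N_G(\emptyset)|=\alpha\cdot 0$ so the subtraction step still works, and the latter needs strict inequality $|W|>|Z|$, which is guaranteed as soon as $U$ is non-empty. Everything else is just the inequality $(n+|Z|)/2<2n/3$, which is precisely what makes the choice of threshold $n/3$ (and interval $I=[n/3,2n/3]$) fit together.
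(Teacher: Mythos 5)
Your proof is correct, and it takes a genuinely different route from the paper's. The paper runs an iterative deletion process: repeatedly remove from the current graph a set $A$ that fails to $\alpha$-expand, let $Z$ be the accumulated union, observe that $|N_G(Z)|<\alpha|Z|$ and hence (by the $(I,\alpha)$-hypothesis) $|Z|$ can never lie in $[n/3,2n/3]$, and then show via a short size computation that the process cannot jump from below $n/3$ to above $2n/3$, so it terminates with $|Z|<n/3$. You instead choose $Z$ in one shot as a maximum-cardinality witness of non-expansion among sets of size below $n/3$, and then verify directly that $G'=G[V\setminus Z]$ expands, using the containment $N_G(Z\cup U)\subseteq (N_G(Z)\setminus U)\cup N_{G'}(U)$ together with a case split on whether $|Z\cup U|$ lands in $I$ (use the hypothesis) or below $n/3$ (use maximality of $Z$); the computation $|Z\cup U|\le (n+|Z|)/2<2n/3$ ensures the first case is always covered by $I$. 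The two arguments need not produce the same set $Z$, but both establish the lemma. Your extremal-witness formulation is arguably cleaner as a pure existence proof; the paper's iterative scheme is, however, a reusable template that reappears almost verbatim in Lemma~\ref{lem3} and Proposition~\ref{prop-sep2}, and its step-by-step flavor aligns with the algorithmic version of the expander-finding result in Theorem~\ref{t-sparse2}.
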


\begin{proof}
Start with $G'=G$, and as long as there is a vertex subset $A\subset V(G')$ of size $|A|\le |V(G')|/2$ satisfying $|N_{G'}(A)|<\alpha |A|$, delete $A$ and update $G':=G'[V(G')\setminus A]$. Observe that the disjoint union of the deleted sets $Z$ obviously satisfies $|N_G(Z)|<\alpha |Z|$. Hence by our assumption on the expansion of $G$ we derive that the size of $Z$ is never in $I$ during the execution of the procedure, so either $|Z|<n/3$ or $|Z|> 2n/3$. Suppose that after some iteration the size of $Z$ exceeds $2n/3$ for the first time. Let $A$ be the set deleted at this iteration, and let $Z'=Z\setminus A$. Then $|Z'|<n/3$, and $|A|\le (n-|Z'|)/2$, implying $|Z|=|Z'|+|A|\le |Z'|+(n-|Z'|)/2= n/2+|Z'|/2<n/2+n/6=2n/3$ -- a contradiction. This indicates that the deletion procedure halts with $|Z|<n/3$, and the subgraph $G[V\setminus Z]$ has the required expansion property.
\end{proof}

\begin{lemma}\label{lem2}
Let $G$ be a $(k,\alpha)$-expander. Then $G$ is a $\left(\frac{3k}{2},\frac{\alpha}{6}\right)$-expander, or there is a subset $V_0\subset V(G)$ of size $|V_0|\ge \frac{2k}{3}$  such that the induced subgraph $G[V_0]$ is an $(\alpha/2)$-expander.
\end{lemma}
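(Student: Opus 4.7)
The plan is to imitate the deletion argument from Lemma 1, but now applied to a carefully chosen small subgraph rather than to $G$ itself, because the hypothesis only gives $(k,\alpha)$-expansion and not expansion on any interval of set sizes. Suppose $G$ is not a $(3k/2,\alpha/6)$-expander, so that there exists a witness $U \subseteq V(G)$ with $|U|\le 3k/2$ and $|N_G(U)|<(\alpha/6)|U|$. The $(k,\alpha)$-expansion of $G$ forces $|U|>k$, since otherwise $|N_G(U)|\ge\alpha|U|>(\alpha/6)|U|$. The goal is to find the desired $V_0$ inside this witness $U$.

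The procedure is as follows: start with $G'=G[U]$, and while there exists $A\subset V(G')$ with $|A|\le |V(G')|/2$ and $|N_{G'}(A)|<(\alpha/2)|A|$, delete $A$ and update $G'$. Let $Z$ denote the union of all deleted sets, and set $V_0=U\setminus Z$. By construction $G[V_0]$ is an $(\alpha/2)$-expander, so everything reduces to showing $|V_0|\ge 2k/3$, equivalently $|Z|\le |U|/3$.

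The heart of the argument is the bound on $|Z|$, and this is where the witness $U$ is genuinely exploited. Just as in Lemma 1, $|N_{G[U]}(Z)|<(\alpha/2)|Z|$. Decomposing $N_G(Z)$ according to whether a neighbor lies in $U$ or outside $U$, and noting that any neighbor of $Z\subseteq U$ lying outside $U$ automatically belongs to $N_G(U)$, one obtains
$$
|N_G(Z)|\le |N_{G[U]}(Z)|+|N_G(U)| < (\alpha/2)|Z|+(\alpha/6)|U|.
$$
Suppose for contradiction that at some moment $|Z|$ exceeds $|U|/3$, and consider the first such step, right after deleting some set $A$. Writing $Z'$ for the value of $Z$ just before this deletion, $|Z'|\le |U|/3$ and $|A|\le (|U|-|Z'|)/2$, so
$$
|Z|=|Z'|+|A|\le (|U|+|Z'|)/2 \le 2|U|/3 \le k,
$$
where the last inequality uses $|U|\le 3k/2$. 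Hence $|Z|\le k$, so the $(k,\alpha)$-expansion of $G$ yields $|N_G(Z)|\ge\alpha|Z|$. Combining this with the upper bound displayed above gives $(\alpha/2)|Z|<(\alpha/6)|U|$, i.e., $|Z|<|U|/3$, contradicting the assumption $|Z|>|U|/3$. Therefore $|Z|\le |U|/3$ throughout, and $|V_0|\ge 2|U|/3>2k/3$.

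The main obstacle is the pinpoint ``first-moment'' calculation: one must verify that at the exact instant $|Z|$ would first cross the threshold $|U|/3$, the value of $|Z|$ is still at most $k$, so that the hypothesized $(k,\alpha)$-expansion can actually be invoked. The numerical choice of $3k/2$ and $\alpha/6$ in the statement is precisely what makes this closure work; with a slightly larger threshold than $3k/2$, the bound $|Z|\le 2|U|/3$ would exceed $k$ and the deletion procedure could run out of control.
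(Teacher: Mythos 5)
Your proof is correct, and it hinges on the same key observation as the paper's: once a witness set $U$ with $k<|U|\le 3k/2$ and $|N_G(U)|<(\alpha/6)|U|$ is found, the external neighbors of any $Z\subseteq U$ split into those staying inside $U$ and those escaping $U$, and the latter are controlled by $|N_G(U)|$. The difference is in how this observation is packaged. The paper shows directly that every $Y\subset X$ with $|X|/3\le |Y|\le 2|X|/3$ satisfies $|N_{G[X]}(Y)|\ge(\alpha/2)|Y|$ (using $(k,\alpha)$-expansion, applicable since $|Y|\le 2|X|/3\le k$, together with $|N_G(X)|<\alpha|X|/6\le \alpha|Y|/2$), so that $G[X]$ is an $\bigl([|X|/3,2|X|/3],\alpha/2\bigr)$-expander, and then applies Lemma~\ref{lem1} as a black box. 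You instead re-run the deletion procedure of Lemma~\ref{lem1} inside $G[U]$ from scratch and derive the contradiction by invoking the $(k,\alpha)$-expansion of $G$ on the accumulated deleted set $Z$, after a first-moment analysis showing the first jump of $|Z|$ past $|U|/3$ lands at size at most $2|U|/3\le k$. Both routes work, and both use the same arithmetic; the paper's version is more modular and avoids re-deriving the stopping argument, while yours makes the role of the $(k,\alpha)$-hypothesis and the choice of the constants $3k/2,\alpha/6$ more transparent. One small remark: the paper first dispatches the trivial case $|V(G)|\le 2k$ (then $V_0=V(G)$ already works); you omit this, but it does not affect your main argument.
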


\begin{proof}
We can assume that $G$ has more than $2k$ vertices, as otherwise the second alternative of the lemma is satisfied with $V_0=V(G)$. If $G$ does not satisfy the first alternative, then there exists a set $X$, $k<|X|\le \frac{3k}{2}$ with $|N_G(X)|<\frac{\alpha |X|}{6}$. But then every subset $Y\subset X$ with $\frac{|X|}{3}\le |Y|\le \frac{2|X|}{3}$, satisfies:
$$
|N_G(Y,X\setminus Y)|\ge |N_G(Y)|-|N_G(X)| \ge \alpha |Y|-\frac{\alpha |X|}{6}\ge \alpha|Y|-\frac{\alpha|Y|}{2}=\frac{\alpha|Y|}{2}\,.
$$
Feeding the induced subgraph $G[X]$ to Lemma~\ref{lem1} produces the required $(\alpha/2)$-expander on at least $2|X|/3\ge 2k/3$ vertices.
\end{proof}

\begin{lemma}\label{lem3}
Let $G=(V,E)$ be a $(\{k\},\alpha)$-expander with $0<\alpha\le 1$. Then there is a vertex subset $Z\subset V$ of size $|Z|< k$ such that the graph $G'=G[V\setminus Z]$ is an $\left(\frac{\alpha k}{3},\frac{\alpha}{3}\right)$-expander.
\end{lemma}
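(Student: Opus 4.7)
The plan is to mimic the iterative ``peeling'' strategy used in Lemma~\ref{lem1} but with parameters tuned to leverage only the single-size expansion at $k$. Starting from $G':=G$ and $Z:=\emptyset$, repeat the following step: while there exists a set $A\subseteq V(G')$ with $|A|\le \alpha k/3$ and $|N_{G'}(A)| < (\alpha/3)|A|$, add $A$ to $Z$ and delete it from $G'$. If the process terminates with $|Z|<k$, then by construction no ``bad'' small set remains in $G[V\setminus Z]$, which is exactly the $\left(\alpha k/3,\alpha/3\right)$-expansion we want.

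The heart of the argument is controlling the cumulative deleted set $Z$. The first easy observation is a union-of-neighborhoods bound: at each stage, any vertex in $N_G(Z)$ lies outside $Z$ yet has a neighbor in some deleted $A_j$; since that vertex was still present in $G'$ just before the $j$-th deletion, it was counted in $N_{G'_{j-1}}(A_j)$. Summing the ``bad'' bounds gives $|N_G(Z)| < (\alpha/3)|Z|$ at every moment.

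The main obstacle (and the only place where the hypothesis is used) is verifying the invariant $|Z|<k$. Suppose for contradiction that some step would push $|Z|$ past $k$ for the first time; let $Z_{\mathrm{old}}$ be the set just before this step and $A$ the set being added, so $|Z_{\mathrm{old}}|<k\le |Z_{\mathrm{old}}|+|A|$. Since $|A|\le \alpha k/3\le k/3$, we get $|Z_{\mathrm{old}}|\ge 2k/3$ and $|Z_{\mathrm{new}}|:=|Z_{\mathrm{old}}|+|A| < k + \alpha k/3$. Now select $Z^*\subseteq Z_{\mathrm{new}}$ of size exactly $k$. Each vertex of $N_G(Z^*)$ belongs either to $Z_{\mathrm{new}}\setminus Z^*$ (of size less than $\alpha k/3$) or to $N_G(Z_{\mathrm{new}})$ (of size less than $(\alpha/3)|Z_{\mathrm{new}}|$), so a quick estimate yields
$$
|N_G(Z^*)| < \frac{\alpha k}{3} + \frac{\alpha}{3}\left(k+\frac{\alpha k}{3}\right) \le \frac{7\alpha k}{9} < \alpha k,
$$
contradicting the $(\{k\},\alpha)$-expansion applied to the size-$k$ set $Z^*$. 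Hence the procedure maintains $|Z|<k$ and must terminate, delivering the desired $G'=G[V\setminus Z]$. The subtle point here is keeping both contributions to $|N_G(Z^*)|$ strictly bounded by $\alpha k$, which is precisely what motivates the choice of thresholds $\alpha k/3$ and $\alpha/3$ in the statement.
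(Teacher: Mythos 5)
Your proof is correct and follows essentially the same strategy as the paper's: the same peeling procedure, the same cumulative bound $|N_G(Z)|<(\alpha/3)|Z|$, and the same key move of extracting a size-$k$ subset of $Z$ at the moment it would first reach $k$ in order to contradict the $(\{k\},\alpha)$-expansion. The only minor difference is bookkeeping at the contradiction step: you bound $N_G(Z^*)$ via $(Z_{\mathrm{new}}\setminus Z^*)\cup N_G(Z_{\mathrm{new}})$ using the cumulative neighborhood bound, whereas the paper picks a specific $A'\subset A$ with $|Z'\cup A'|=k$ and bounds $N_G(Z'\cup A')$ piecewise; both routes land comfortably below $\alpha k$.
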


\begin{proof}
Start with $G'=G$, and as long as there is a vertex subset $A\subset V(G')$ of size $|A|\le \frac{\alpha k}{3}$ with $|N_{G'}(A)|<\alpha |A|/3$, delete $A$ and update $G':=G'[V(G')\setminus A]$. Observe that the union $Z$ of deleted sets obviously satisfies  $|N_G(Z)|<\alpha |Z|/3$. Assume $|Z|$ reaches $k$ after some iteration.  Let $A$ be the set deleted at this iteration, and let $Z'=Z-A$. Then $|Z'|<k$, and $|A|\le \alpha k/3$ with $|N_G(A,V-Z')|<\alpha |A|/3$. Choose an arbitrary subset $A'\subset A$ so that $|Z'\cup A'|=k$. Then
\begin{align*}
\alpha k&\le |N_G(Z'\cup A')|\le |N_G(Z')|+|N_G(A',V\setminus Z')|\le |N_G(Z')|+|N_G(A,V\setminus Z')|+|A\setminus A'|\\
&<\frac{\alpha k}{3}+\frac{\alpha |A|}{3}+|A|
<\frac{\alpha k}{3}+\frac{\alpha^2 k}{3}+\frac{\alpha k}{3}\le \alpha k
\end{align*}
-- a contradiction.
\end{proof}

We arrive at the following conclusion:
\begin{thm}\label{t-defs}
For every $0<c,\alpha\le 1$ there are $c',\alpha'>0$ such that the following holds. Let $G$ be a $(\{k\},\alpha)$-expander on $n$ vertices for $k=\lfloor cn\rfloor$. Then $G$ contains an induced subgraph $G'$ on at least $c'n$ vertices which is an $\alpha'$-expander.
\end{thm}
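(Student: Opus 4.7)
The plan is to combine the three preceding lemmas in a short bootstrap. Lemma~\ref{lem3} converts the single-size $(\{k\},\alpha)$-expansion hypothesis into small-set $(k_0,\alpha_0)$-expansion of an induced subgraph $G_1$, after which repeated applications of Lemma~\ref{lem2} will either directly produce the desired $\alpha'$-expander (via the second alternative) or geometrically enlarge the range of set sizes being expanded, until that range already covers half of $V(G_1)$ and $G_1$ itself is an expander in the sense of Definition~\ref{def-expander}.

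In detail, I would first invoke Lemma~\ref{lem3} to delete a set $Z$ of size less than $k$ from $G$, obtaining $G_1=G[V\setminus Z]$ that is a $(k_0,\alpha_0)$-expander with $k_0=\lfloor \alpha k/3\rfloor=\Theta_{c,\alpha}(n)$ and $\alpha_0=\alpha/3$. Note $|V(G_1)|\ge n-k\ge(1-c)n$, so the ambient host is already linear in $n$.

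I would then define recursively $k_{t+1}=\lceil 3k_t/2\rceil$ and $\alpha_{t+1}=\alpha_t/6$, and iterate Lemma~\ref{lem2} on $G_1$. At step $t$: either $G_1$ is a $(k_{t+1},\alpha_{t+1})$-expander and I continue, or Lemma~\ref{lem2} yields an induced subgraph $G'\subseteq G_1$ on at least $2k_t/3\ge 2k_0/3=\Theta(cn)$ vertices that is an $(\alpha_t/2)$-expander, and I stop with the conclusion. Since $k_t$ grows by a factor $3/2$ per step while $|V(G_1)|\le n$ is fixed, after some $T$ bounded in terms of $c$ and $\alpha$ alone we must reach $k_T\ge |V(G_1)|/2$. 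At that moment the $(k_T,\alpha_T)$-expansion of $G_1$ is exactly the $\alpha_T$-expander property of Definition~\ref{def-expander}, and $G_1$ itself is the sought subgraph. In either outcome, the parameters $c'$ and $\alpha'\ge \alpha/(3\cdot 6^T)$ depend only on $c$ and $\alpha$.

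The main thing to be careful about is the boundary case where $|V(G_1)|\le 2k_t$ at some intermediate iteration: Lemma~\ref{lem2} then falls back to $V_0=V(G_1)$, and one must verify that $G_1$ really is an $(\alpha_t/2)$-expander in the sense of Definition~\ref{def-expander}. This check is immediate, since $|V(G_1)|/2\le k_t$ means the $(k_t,\alpha_t)$-expansion already constrains every subset of size at most $|V(G_1)|/2$. With this sanity check in hand, the finite recursion closes cleanly; beyond it, the argument is routine bookkeeping.
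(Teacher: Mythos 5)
Your proposal is correct and takes essentially the approach the paper intends: Theorem~\ref{t-defs} is stated immediately after Lemmas~\ref{lem1}--\ref{lem3} with no separate proof, precisely because it is meant to be read off by first applying Lemma~\ref{lem3} to pass from single-size expansion to small-set expansion of an induced subgraph, and then iterating Lemma~\ref{lem2} (with Lemma~\ref{lem1} already baked into its proof) a bounded number of times until either the second alternative produces a linear-sized expander or the range of expanding sets covers half the subgraph. Your treatment of the boundary case $|V(G_1)|\le 2k_t$ is also exactly the observation the paper makes at the start of the proof of Lemma~\ref{lem2}.
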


The above theorem indicates clearly that the alternative definitions of expanders, as given by Definitions~\ref{def-expander}--\ref{def-expander3}, are basically equivalent on the qualitative level: assuming a one-point expansion of Definition~\ref{def-expander2} with $I=\{k\}$ for $k=\Theta(n)$ delivers an $\alpha$-expander of Definition~\ref{def-expander} on linearly many vertices. Hence in the rest of this survey we will mostly work with Definition~\ref{def-expander}.

Observing Theorem~\ref{t-defs}, one may wonder whether assuming expansion of sublinear sets in a graph $G$, or even more ambitiously of sets of size up to $k$, where $k$ is much smaller than $n=|V(G)|$, can guarantee the existence of a substantially sized expanding subgraph $G'$ of $G$. For one, a $(k,\alpha)$-expander on $n$ vertices can be just a collection of disjoint cliques of size about $(1+\alpha)k$ each, and so if $k=o(n)$, such $G$ does not contain a linearly sized expanding subgraph. Thus, the assumption $k=\Theta(n)$ in Theorem~\ref{t-defs} is essential. A much more striking example is provided by Moshkovitz and Shapira~\cite{MS18}. They prove  (see Theorem 1 of~\cite{MS18}) that for every $n$ and  $k=O(\log\log n)$, there is an $n$-vertex graph $G$ with all degrees of order $k$ and $\girth(G)=\Omega(\log n/k^2)$, in which every induced subgraph $H$ on $t\ge \girth(G)$ vertices has a set of edges $E_0\subseteq E(H)$ of size $|E_0|=o(t)$, whose removal from $H$ creates a graph $H'$ with all connected components having at most $2t/3$ vertices. (Rephrasing it, every induced subgraph $H$ of $G$ with less than $\girth(G)$ vertices is a tree, while every induced $H$ with at least $\girth(G)$ vertices has a small edge-separator and thus cannot be an expander.) Such a graph $G$ can easily be argued to be a good local expander; for example, every set $U\subset V(G)$ of size $|U|\le (ck)^{(\girth(G)-1)/2}$ satisfies $|N_G(U)|\ge 2|U|$, for an appropriately chosen constant $c>0$, see Lemma 2.1 of~\cite{SV08}. Hence local expansion in a graph cannot always be traded for global expansion
in a subgraph.

\section{Basic properties of $\alpha$-expanders}\label{s2}
This short section discusses the most basic properties of $\alpha$-expanders. It can serve as a vindication of our choice to stick to Definition~\ref{def-expander} as the definition of choice for expanders.

For a graph $G=(V,E)$, a vertex $v\in V$ and a natural number $t$, denote by $B_t(v)$ the ball of radius $t$ around $v$ in $G$, i.e., $B_t(v)$ is the set of vertices of $G$ at distance at most $t$ from $v$.

\begin{propos}\label{p1}
Let $G=(V,E)$ be an $\alpha$-expander on $n$ vertices. Then for every $v\in V$ and every natural $t$, we have: $|B_t(v)|\ge \min\left\{\frac{n}{2},(1+\alpha)^t\right\}$.
\end{propos}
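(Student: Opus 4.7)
The plan is to prove this by induction on the radius $t$, with the expander hypothesis feeding the inductive step directly. The base case $t=0$ is immediate since $B_0(v)=\{v\}$ has size $1=(1+\alpha)^0$, and also $1\le n/2$ for $n\ge 2$ (the case $n=1$ is trivial).

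For the inductive step, I would fix $v$, set $U=B_t(v)$, and split into two cases. If $|U|\ge n/2$, then $B_{t+1}(v)\supseteq U$ already has size at least $n/2$, and we are done for this step. Otherwise $|U|<n/2$, which is exactly the regime where Definition~\ref{def-expander} applies, so $|N_G(U)|\ge\alpha|U|$. The key combinatorial observation is that every vertex of $N_G(U)$ lies at distance exactly $t+1$ from $v$ (since it has a neighbor in $U$ but is not itself in $U$), so $B_{t+1}(v)=U\cup N_G(U)$ as a disjoint union, giving
\[
|B_{t+1}(v)|\ge (1+\alpha)|U|.
\]
Now I would argue that in this second case the inductive hypothesis must be supplying the $(1+\alpha)^t$ alternative rather than the $n/2$ alternative: if the min were attained at $n/2$, the hypothesis would force $|U|\ge n/2$, contradicting the case assumption. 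Hence $|U|\ge (1+\alpha)^t$, and therefore $|B_{t+1}(v)|\ge (1+\alpha)^{t+1}\ge \min\{n/2,(1+\alpha)^{t+1}\}$.

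There is no real obstacle here; the only thing to be slightly careful about is the bookkeeping of the two branches of the $\min$ in the inductive hypothesis versus the conclusion, i.e.\ making sure one does not accidentally apply the expander inequality to a set of size greater than $n/2$. Once that case split is laid out cleanly the two cases fit together with no further work, and the proof is essentially a one-paragraph induction.
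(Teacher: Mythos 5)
Your proof is correct and follows essentially the same inductive argument as the paper: split on whether $|B_{t}(v)|$ has already reached $n/2$, and otherwise invoke the expander inequality on $U=B_t(v)$ to get $|B_{t+1}(v)|\ge(1+\alpha)|U|\ge(1+\alpha)^{t+1}$. The only difference is that you spell out the disjointness of $U$ and $N_G(U)$ and the case bookkeeping a bit more explicitly, but the underlying argument is identical.
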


\begin{proof} By induction on $t$. Obvious for $t=0$. For the induction step, observe that $B_t(v)\supseteq B_{t-1}(v)$. If $|B_{t-1}(v)|\ge \frac{n}{2}$, we are done. Otherwise, by the induction hypothesis we have $|B_{t-1}|\ge (1+\alpha)^{t-1}$. Denoting $U=B_{t-1}(v)$, we have: $B_{t}(v)=U\cup N(U)$, and $|N(U)|\ge\alpha |U|$. It follows that $|B_t(v)|\ge (1+\alpha)|U|\ge (1+\alpha)^t$, as required.
\end{proof}

 \begin{coroll}\label{coroll2}
Let $G$ be an $\alpha$-expander on $n$ vertices. Then the diameter $diam(G)$ satisfies:
 $\diam(G)\le \left\lceil\frac{2(\log n-1)}{\log(1+\alpha)}\right\rceil+1=O_{\alpha}(\log n)$.
\end{coroll}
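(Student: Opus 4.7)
The plan is to derive the diameter bound as an immediate consequence of the ball-growth estimate in Proposition~\ref{p1} via a standard two-sided ball-meeting argument. Fix any pair of vertices $u,v\in V(G)$, and set $t_0=\left\lceil(\log n-1)/\log(1+\alpha)\right\rceil$, so that $(1+\alpha)^{t_0}\ge n/2$. By Proposition~\ref{p1}, both balls $B_{t_0}(u)$ and $B_{t_0}(v)$ have size at least $n/2$.

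Now I would split into two cases. If $B_{t_0}(u)\cap B_{t_0}(v)\ne\emptyset$, then concatenating a shortest path from $u$ to a common vertex $w$ with a shortest path from $w$ to $v$ yields a $u$--$v$ walk of length at most $2t_0$, and hence $\mathrm{dist}_G(u,v)\le 2t_0$. Otherwise the two balls are disjoint, and since each has size at least $n/2$, they must both have size exactly $n/2$ and together partition $V$. Apply $\alpha$-expansion to $U=B_{t_0}(u)$: since $|U|=n/2$, there is some vertex $v'\in N_G(U)\subseteq V\setminus U=B_{t_0}(v)$, so there is an edge $u'v'\in E(G)$ with $u'\in B_{t_0}(u)$ and $v'\in B_{t_0}(v)$. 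Gluing a $u$--$u'$ path of length $\le t_0$, the edge $u'v'$, and a $v'$--$v$ path of length $\le t_0$ gives $\mathrm{dist}_G(u,v)\le 2t_0+1$.

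In either case $\mathrm{dist}_G(u,v)\le 2t_0+1$, and since $u,v$ were arbitrary we conclude $\diam(G)\le 2t_0+1$, which is of the form stated in the corollary, and $O_\alpha(\log n)$ because $\log(1+\alpha)=\Theta(\alpha)$ for small constant $\alpha>0$.

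I do not expect any genuine obstacle here; the only mildly delicate point is the handling of the borderline case in which the two balls are disjoint but each exhausts exactly half of $V$, and that is precisely where the extra ``$+1$'' in the stated bound comes from — one uses the expansion hypothesis once more to produce the connecting edge between the two halves. Matching the exact form $\left\lceil 2(\log n-1)/\log(1+\alpha)\right\rceil+1$ (rather than $2\lceil(\log n-1)/\log(1+\alpha)\rceil+1$) is a minor rounding issue that can be absorbed by choosing the two radii slightly asymmetrically, but it does not affect the qualitative $O_\alpha(\log n)$ conclusion.
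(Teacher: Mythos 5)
Your argument is correct and is essentially the paper's own proof: grow balls of radius $t=\lceil\log(n/2)/\log(1+\alpha)\rceil$ around $u$ and $v$, observe via Proposition~\ref{p1} that each has size at least $n/2$, and conclude that they either intersect or (using expansion once more on a set of size exactly $n/2$) are joined by an edge. The rounding mismatch you flag between $2\lceil x\rceil+1$ and $\lceil 2x\rceil+1$ is also present in the paper's own proof and is covered by the paper's declared casual treatment of rounding, so it is not a defect of your argument.
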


\begin{proof} Let $u\ne v\in V(G)$. Growing balls around $u,v$, we obtain by Proposition~\ref{p1} that for
 $t=\left\lceil\frac{\log (n/2)}{\log(1+\alpha)}\right\rceil$ the balls $B_t(u), B_t(v)$ have at least $n/2$ vertices each. Then they intersect each other, or are connected by an edge.
\end{proof}

In particular, an $\alpha$-expander $G$ is connected.

Turning now to the Cheeger constant, observe that an $\alpha$-expander $G$ of maximum degree $\Delta=\Delta(G)$ satisfies trivially: $h(G)\ge \frac{\alpha}{\Delta}$. Thus, assuming that both $\alpha,\Delta=\Theta(1)$, we derive that in this case the Cheeger constant of $G$ is bounded away from zero.

Going from the Cheeger constant to random walks, we derive, using the classical result of Jerrum and Sinclair~\cite{JS88}, that the mixing time $T_{mix}$ of a (lazy) random walk on an $\alpha$-expander $G$ on $n$ vertices with $\alpha=\Theta(1)$ and bounded maximum degree satisfies $T_{mix}=O(\log n)$.
The assumption that the maximum degree of $G$ is bounded is essential here, as for example taking $G$ to be two disjoint cliques of size $n/2$ connected by a nice bounded degree expander, we observe that $G$ is an $\alpha$-expander with $\alpha=\Theta(1)$, and yet it takes a random walk a linear time in $n$ on average to cross from one clique to the other. Since random walks are essentially outside the scope of this survey, we will not dwell on their aspects anymore, instead referring the interested reader to standard sources on random walks (say,~\cite{LPW09}) for background, definitions and discussion.

We can already conclude that the definition of $\alpha$-expanders we have adopted is capable of guaranteeing easily several basic and desirable properties of the graph. We aim however to set our goals much higher than that, and to derive further properties of $\alpha$-expanders, such as the existence of linearly long paths and cycles, non-existence of sublinear separators, and embedding of large minors. These properties are to be discussed later in this survey.

\section{Examples of $\alpha$-expanders}\label{s-examples}

In this section we present some archetypal (families of) examples of weak expanders. The aim here is not just to showcase concrete instances of $\alpha$-expanders, but also to adjust our level of expectations of what can be found in $\alpha$-expanders upon observing some concrete examples.

\medskip

\noindent{\bf 1. Bipartite graphs.} The complete bipartite graph $G$ with parts of size  $\alpha n$ and $(1-\alpha)n$ for $\alpha\le 1/2$ is obviously an $\alpha$-expander. This example can be sparsified by taking $G$ to be a good expander with linearly many edges between parts of size $2\alpha n$ and $(1-2\alpha)n$ (for $\alpha\le 1/4$).

This example shows us that we cannot always expect to find an odd cycle in an expander.

\medskip

\noindent{\bf 2. Spectral approach.} This is a very powerful and appealing approach, allowing to connect between expansion properties of a graph and its eigenvalues. (See, e.g.,~\cite{Chung-book,BH} for a general background on spectral graph theory.) For a graph $G$ with vertex set $V(G)=[n]$, the adjacency matrix $A(G)$ is an $n$-by-$n$ matrix whose entry $a_{ij}$ is 1 if $(i,j)\in E(G)$ and $a_{ij}=0$ otherwise. This is a symmetric matrix with $n$ real eigenvalues, usually sorted in the non-increasing order $\lambda_1\ge\lambda_2\ge\ldots\ge \lambda_n$; they are usually called the eigenvalues of the graph $G$ itself. For the simplest case of $d$-regular graphs, the first (or trivial) eigenvalue is $\lambda_1=d$, and it is the second eigenvalue that governs the graph expansion. Specifically, the following is true (see, e.g., Corollary 9.2.2 of~\cite{AS}):

\begin{propos}\label{p-eigenvalues}
Let $G$ be a $d$-regular graph on $n$ vertices with the second largest eigenvalue $\lambda$. Then $G$ is an $\alpha$-expander, with $\alpha=\frac{d-\lambda}{2d}$.
\end{propos}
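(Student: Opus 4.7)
The plan is to prove this via the standard spectral argument: apply the spectral decomposition to the indicator vector of $U$, control $\|A\mathbb{1}_U\|^2$ by the eigenvalue gap, and convert this via Cauchy--Schwarz into a bound on $|U\cup N(U)|$.

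First I would fix an orthonormal eigenbasis $v_1,\ldots,v_n$ of the adjacency matrix $A=A(G)$ with corresponding eigenvalues $d=\lambda_1\ge\lambda_2\ge\ldots\ge \lambda_n$; since $G$ is $d$-regular, the top eigenvector is $v_1=\mathbf{1}/\sqrt{n}$. Given $U\subseteq V$ with $|U|\le n/2$, set $x=\mathbb{1}_U$ and expand $x=\sum_i \alpha_i v_i$. Then $\alpha_1=\langle x,v_1\rangle=|U|/\sqrt{n}$ and $\sum_i \alpha_i^2=\|x\|^2=|U|$. The key quantity is $Ax$, whose $v$-th coordinate equals $|N(v)\cap U|$; in particular $Ax$ is supported on $U\cup N(U)$, and $\mathbf{1}^{T}Ax = d|U|$.

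Next I would estimate the norm of $Ax$ from above using the assumption on the eigenvalues. Since $|\lambda_i|\le \lambda$ for every $i\ge 2$ (treating $\lambda$ as $\max_{i\ge 2}|\lambda_i|$, which the convention in this survey tacitly covers), one gets
\[
\|Ax\|^2 = d^2\alpha_1^2+\sum_{i\ge 2}\lambda_i^2\alpha_i^2 \le \frac{d^2|U|^2}{n}+\lambda^2\left(|U|-\frac{|U|^2}{n}\right)=\frac{|U|}{n}\bigl(d^2|U|+\lambda^2(n-|U|)\bigr).
\]
Then I would exploit the support restriction via Cauchy--Schwarz applied to $Ax$ and the indicator of $U\cup N(U)$:
\[
d^2|U|^2=(\mathbf{1}^{T}Ax)^2\le |U\cup N(U)|\cdot\|Ax\|^2=(|U|+|N(U)|)\,\|Ax\|^2.
\]
Combining the last two displayed lines and solving for $|N(U)|$ yields
\[
|N(U)|\ \ge\ \frac{|U|(n-|U|)(d^2-\lambda^2)}{d^2|U|+\lambda^2(n-|U|)}.
\]

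Finally I would convert this to the claimed linear expansion. Using $|U|\le n/2\le n-|U|$, the denominator is bounded above by $(d^2+\lambda^2)(n-|U|)\le 2d^2(n-|U|)$, giving
\[
|N(U)|\ \ge\ \frac{(d^2-\lambda^2)|U|}{2d^2}=\frac{(d-\lambda)(d+\lambda)}{2d^2}\,|U|\ \ge\ \frac{d-\lambda}{2d}\,|U|,
\]
where the last inequality uses $\lambda\ge 0$ (if $\lambda<0$, the bound is already stronger, since $|N(U)|\ge |U|/2$ follows directly from the penultimate display). The main obstacle, as usual in these arguments, is arranging the final algebra so that the factor $2d$ in the denominator of $\alpha$ appears cleanly; the step of replacing $d^2|U|+\lambda^2(n-|U|)$ by $2d^2(n-|U|)$ is what forces the loss of a factor of two and is the reason the conclusion is $\alpha=(d-\lambda)/(2d)$ rather than the sharper Cheeger-type bound $(d-\lambda)/d$.
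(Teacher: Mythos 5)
Your argument is a clean implementation of the Tanner-type bound, but as written it proves a weaker statement than the one asserted. The survey fixes the ordering $\lambda_1\ge\lambda_2\ge\cdots\ge\lambda_n$ and takes $\lambda$ to be $\lambda_2$, the second largest eigenvalue. The step
$$
\|Ax\|^2 = d^2\alpha_1^2+\sum_{i\ge 2}\lambda_i^2\alpha_i^2 \le d^2\alpha_1^2+\lambda^2\sum_{i\ge 2}\alpha_i^2
$$
requires $\lambda_i^2\le\lambda^2$ for \emph{every} $i\ge 2$, i.e.\ $\lambda\ge\max_{i\ge 2}|\lambda_i|$; in particular it must dominate $|\lambda_n|$, which can be far larger than $\lambda_2$. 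You flagged this and silently replaced $\lambda_2$ by $\max_{i\ge2}|\lambda_i|$, but that is not the survey's convention and the two statements are genuinely different. For a $d$-regular bipartite graph --- one of the survey's own canonical examples of an expander, Section~\ref{s-examples}, Example 1 --- one has $\lambda_n=-d$, so $\max_{i\ge 2}|\lambda_i|=d$ and your $\alpha$ equals $0$, a vacuous conclusion, while $\lambda_2$ can be as small as $2\sqrt{d-1}$, and the stated proposition then gives $\alpha$ close to $1/2$. (The parenthetical remark at the end about $\lambda<0$ is also off: from your penultimate display one gets $|N(U)|\ge\frac{(d^2-\lambda^2)|U|}{2d^2}$, which is \emph{strictly less} than $|U|/2$ when $\lambda\ne 0$, so it does not yield the claimed $\frac{d-\lambda}{2d}|U|>|U|/2$ for negative $\lambda$.)

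The proof the paper points to (Alon--Spencer, Corollary 9.2.2; see also Alon's ``Eigenvalues and expanders'') uses a signed test vector rather than $\|Ax\|^2$, and therefore never sees $\lambda_n$. Put $V_1=U$, $V_0=N(U)$, $V_2=V\setminus(U\cup N(U))$, and let $g$ take the value $|V_2|$ on $V_1$, $-|V_1|$ on $V_2$, and $0$ on $V_0$; then $g\perp\mathbf{1}$, so the Laplacian quadratic form gives
$$
\sum_{uv\in E}(g(u)-g(v))^2 \;=\; g^T(dI-A)\,g \;\ge\; (d-\lambda_2)\,\|g\|^2,
$$
a one-sided Rayleigh bound involving only $\lambda_2$. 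As there are no $V_1$--$V_2$ edges, the left side equals $e(V_1,V_0)|V_2|^2+e(V_2,V_0)|V_1|^2\le d\,|V_0|\max(|V_1|,|V_2|)^2$, while $\|g\|^2=|V_1|\,|V_2|\,(|V_1|+|V_2|)$. This gives $|N(U)|=|V_0|\ge\frac{d-\lambda_2}{d}\min(|V_1|,|V_2|)$, and a short case analysis using $|U|\le n/2$ and $d-\lambda_2\le d$ yields $|N(U)|\ge\frac{d-\lambda_2}{2d}|U|$. The essential structural difference is that $\|Ax\|^2$ is a two-sided $\ell^2$ quantity controlled by $\max_{i\ge 2}|\lambda_i|$, whereas $g^T(dI-A)g$ is a one-sided quantity blind to the bottom of the spectrum. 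To repair your write-up, either weaken the proposition to $(n,d,\lambda)$-graphs (with $\lambda=\max_{i\ge 2}|\lambda_i|$), or switch to the signed-vector argument.
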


Hence a $d$-regular graph with eigenvalue gap $d-\lambda_2=\Theta(1)$ and $d=O(1)$ is a weak expander. This implication is reversible -- a regular expander has its two first eigenvalues well separated, see~\cite{Alo86}.

The spectral approach extends to the general (non-regular) case.  (See~\cite{Chung-book} for an extensive discussion of the subject and all missing definitions.) Let $G$ be a graph on $n$ vertices, and let $0=\mu_0\le \mu_1\le\ldots\le \mu_{n-1}$ be the eigenvalues of the normalized Laplacian ${\cal L}(G)$ of $G$, defined by ${\cal L}=I_n-D^{-1/2}AD^{1/2}$, where $D=D(G)=\DIAG({\deg(v_1),\ldots,\deg(v_n)})$ is the degree matrix of $G$ (we use the convention $D^{-1}_{ii}=0$ when $\deg(v_i)=0$).  We set $\mu(G)=\mu_1$. Let $h(G)$ be the Cheeger constant of $G$.  The famous {\em Cheeger inequality for graphs}~\cite{Dod84, AM85, Alo86} states that
\begin{equation}\label{lb-Cheeger}
\frac{h^2(G)}{2}\le \mu(G)\le 2h(G)\,.
\end{equation}
In particular, assuming that $G$ has its degrees bounded, and recalling the prior discussion about expansion and the Cheeger constant, we conclude that having the first non-trivial eigenvalue of the normalized Laplacian $\mu(G)$ bounded away from 0 guarantees expansion. (Here we use the second part of the Cheeger inequality (\ref{lb-Cheeger}), the first (and more involved) part will be utilized later, when discussing algorithmic issues.)

\medskip

\noindent{\bf 3. Stretching edges of a bounded degree expander.} This is a systematic way to obtain a (weaker) expander from a given expander $G$ by stretching, or subdividing, the edges of $G$.
\begin{propos}\label{p-stretching}
Let $G$ be an $\alpha$-expander of maximum degree $\Delta=O(1)$, and let $\ell$ be a positive integer. Subdividing each edge of $G$ $\ell$ times produces an $\Omega_{\Delta}(\alpha/\ell)$-expander $G'$.
\end{propos}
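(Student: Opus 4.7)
My plan is to reduce the expansion of $G'$ to the $\alpha$-expansion of $G$ via a contraction-type map. Write $V_0 = V(G) \subseteq V(G')$ for the set of original vertices, and for each edge $e = uv \in E(G)$ let $I_e$ be the set of $\ell$ internal vertices of the subdivided path $P_e$, so that $V(G') = V_0 \sqcup \bigsqcup_{e} I_e$. Given $U \subseteq V(G')$ with $|U| \le |V(G')|/2$, define $U_0 = U \cap V_0$, the ``touched'' edges $F = \{e \in E(G) : U \cap I_e \ne \emptyset\}$, and the shadow $T = U_0 \cup V(F) \subseteq V_0$, where $V(F)$ is the set of endpoints of edges in $F$. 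Two elementary inequalities, $|U| \le |U_0| + \ell|F|$ (at most $\ell$ internal $U$-vertices per touched path) and $|V(F)| \ge 2|F|/\Delta$ (handshake on the $F$-subgraph, whose maximum degree is at most $\Delta$), combine with $|T| \ge \max(|U_0|, |V(F)|)$ to give $|T| \ge 2|U|/(\ell\Delta + 2)$; so $|T|$ has order $|U|/\ell$ up to a factor depending only on $\Delta$.

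In the main case $|T| \le n/2$, the $\alpha$-expansion of $G$ yields $|N_G(T)| \ge \alpha|T|$, and I construct a map $\phi \colon N_G(T) \to N_{G'}(U)$ with multiplicity at most $\Delta$. For each $v \in N_G(T)$ fix a canonical edge $uv$ with $u \in T$; since $v \notin T$, the edge $uv$ satisfies $|U \cap I_{uv}| = 0$. If $u \in U_0$, the unique internal vertex of $P_{uv}$ adjacent to $u$ is a proper external neighbor of $U$ determined by the pair $(u,v)$. If instead $u \in V(F) \setminus U_0$, pick a canonical $F$-edge $e' \ni u$ and let $\phi(v)$ be the vertex just preceding the first $U$-vertex encountered when walking along $P_{e'}$ from $u$; this vertex depends only on $u$, so at most $\deg_G(u) \le \Delta$ different $v$'s share it. Hence $|N_{G'}(U)| \ge |N_G(T)|/\Delta \ge \alpha|T|/\Delta = \Omega_\Delta(\alpha|U|/\ell)$, as required.

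The complementary case $|T| > n/2$ I would handle by running the same scheme on $S := V_0 \setminus T$, of size below $n/2$, using the expansion $|N_G(S)| \ge \alpha|S|$ and a symmetric injection into $N_{G'}(U)$ (with the roles of $u$ and $v$ exchanged in the two sub-cases above). In the degenerate sub-case where $|S|$ itself is too small to produce a useful bound (i.e.\ $|T|$ nearly fills $V_0$), I would instead count external neighbors combinatorially: each ``split'' path with $0 < |U \cap I_e| < \ell$ yields at least one internal external neighbor of $U$, and each ``fully covered'' path with $|U \cap I_e| = \ell$ contributes through its endpoints in $V_0 \setminus U_0$; together these two counts already produce the required $\Omega_\Delta(|U|/\ell)$ bound without invoking $G$-expansion at all.

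The main technical hurdle is the construction of $\phi$ and the control of its multiplicity: the natural recipe ``send $v$ to the external neighbor witnessed at its $T$-neighbor $u$'' fails to be injective when $u \in V(F) \setminus U_0$, because many $v$'s adjacent to the same $u$ reuse the same witness on some $F$-edge at $u$. Bounding this multiplicity by $\Delta$ is precisely what forces the hidden constant in $\Omega_\Delta(\alpha/\ell)$ to depend on the maximum degree of $G$, which is the reason the clean statement of the proposition requires the assumption $\Delta = O(1)$.
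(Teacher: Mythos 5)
Your reduction to three regimes is a sensible plan, and both the $|T|\le n/2$ case and the symmetric $S$-expansion argument are correct as you describe them (the multiplicity-$\Delta$ analysis of $\phi$ does go through, including the cross-case disjointness). The gap is in the degenerate sub-case: your combinatorial count omits a necessary source of external neighbors, namely the \emph{untouched} paths ($U\cap I_e=\emptyset$) with an endpoint in $U_0$. For such an edge $e=\{x,y\}$ with $x\in U_0$, the internal vertex of $P_e$ adjacent to $x$ lies outside $U$ but is adjacent to $x\in U$, hence belongs to $N_{G'}(U)$. You only count split paths and non-$U_0$ endpoints of fully-covered paths, and this can return zero while $|U|$ is large: take $U_0=V_0$, $F_1=\emptyset$, and $U=V_0\cup\bigcup_{e\in F_2}I_e$ for any $F_2\subsetneq E(G)$ with $\ell|F_2|\le(\ell m-n)/2$ (so that $|U|\le|V(G')|/2$). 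Then $T=V_0$ and $S=\emptyset$, so this lands squarely in your degenerate sub-case; your stated count $|F_1|+|V(F_2)\setminus U_0|$ equals $0$, yet $|U|$ can be of order $\ell m$, and here the entirety of $N_{G'}(U)$ lives on the untouched paths you dropped.

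Adding the missing term fixes the count: one has $|N_{G'}(U)|\ge |F_1|+|V(F_2)\setminus U_0|+|\{e\in E(G): U\cap I_e=\emptyset,\ V(e)\cap U_0\ne\emptyset\}|$, with the three contributions pairwise disjoint (internal vertices of split paths, internal vertices of untouched paths, and original vertices, respectively), and a short computation combining $|U|\le(n+\ell m)/2$, $|U|\ge|U_0|+\ell|F_2|$, the bound $|V_0\setminus U_0|\le|S|+|V(F)\setminus U_0|$, and $\Delta(G)\le\Delta$ then delivers $\Omega_\Delta(|U|/\ell)$ in the degenerate regime. So the overall strategy is salvageable, but as written the final sub-case is not a complete proof. (The paper states Proposition~\ref{p-stretching} without proof, so there is no reference argument to compare to.)
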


While being a nice way of producing new expanders, the above proposition should turn all warning lights on. Indeed, even starting with a very strong bounded degree expander $G$, the new graph $G'$ has no adjacent vertices of degree more than 2. Hence, one {\em cannot} hope to embed in $G'$ any graph $H$ in which some two vertices of degree at least three are adjacent. Even if $H$ is a little tree with two vertices of degree three adjacent -- $H$ is not there in $G'$. This seems to indicate very gloomy prospects for this line of business, and for this survey in particular -- there are seemingly not many structures one can seek to embed in an $\alpha$-expander... Not all is lost however -- weak expanders do have a rich enough structure, and we can argue about the existence of long paths and cycles, and also about embedding minors. This is indeed what we plan to do in the sequel.

\medskip

\noindent{\bf 4. Random regular graphs.} Let $G_{n,d}$ be the probability space of $d$-regular graphs on $n$ vertices. (See, e.g.,~\cite{Wor99}, or~\cite[Chapter 10]{FK} for background.) For every fixed $d\ge 3$ and growing $n$ a graph $G$ drawn from $G_{n,d}$ is typically an expander. This can be shown directly using the so-called configuration model, as was done by Bollob\'as in~\cite{Bol88}; another possible approach is to invoke Proposition ~\ref{p-eigenvalues} and known results about eigenvalues of random regular graphs~\cite{Fri08}.

\section{Expanders and separators}\label{s-separators}
Separators are one of the central concepts in graph theory in general, and in structural graph theory in particular.

\begin{defin}\label{def-separator}
Let $G=(V,E)$ be a graph on $n$ vertices. A vertex set $S\subset V$ is a {\em separator} in $G$ if there is a partition $V=A\cup B\cup S$ of the vertex set of $G$ such that $|A|,|B|\le 2n/3$, and $G$ has no edges between $A$ and $B$.
\end{defin}

Separators serve to measure quantitatively the connectivity of large vertex sets in graphs; the fact that all separators in $G$ are large indicates that it is costly to break $G$ into large pieces not connected by any edge. Observe that if $G$ is a bounded degree graph, then disconnecting $G$ can be done cheaply by removing $O(1)$ neighbors of any vertex $v\in V(G)$; however, if $G$ is well connected, then finding a small sized separator might be impossible.

Separators came into prominence with the celebrated result of Lipton and Tarjan~\cite{LT79}, asserting that every planar graph on $n$ vertices has a separator of size $O\left(\sqrt{n}\right)$. This line of research, advancing the alternative ``a small separator or a large minor" to be addressed later in this survey, has been quite fruitful over the years, see, e.g.,~\cite{AST90,PRS94,KR10}. We will return to it later in this survey.

It is easy to argue that expanders do not have small separators.

\begin{propos}\label{prop-sep1}
Let $G$ be an $\alpha$-expander on $n$ vertices, and let $S$ be a separator in $G$. Then  $|S|\ge \frac{\alpha n}{3(1+\alpha)}$.
\end{propos}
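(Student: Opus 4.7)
The plan is to use the separator structure to identify a ``small side'' on which we can apply the expansion assumption, and then exploit the fact that the only vertices outside the small side that can be reached from it must lie in the separator itself.

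Concretely, let $V=A\cup B\cup S$ be a separator partition with $|A|,|B|\le 2n/3$ and no edges between $A$ and $B$. Without loss of generality assume $|A|\le |B|$; then $|A|\le (|A|+|B|)/2\le n/2$, so Definition~\ref{def-expander} applies and gives $|N_G(A)|\ge \alpha|A|$. The crucial structural observation is that $N_G(A)\subseteq S$: any external neighbor of $A$ lies in $V\setminus A=B\cup S$, but the no-edges-between-$A$-and-$B$ condition forbids neighbors in $B$. Consequently $|S|\ge \alpha |A|$.

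Next I would turn this into a bound involving only $n$ and $|S|$ by lower-bounding $|A|$ using the cap on $|B|$. Since $|A|+|B|+|S|=n$ and $|B|\le 2n/3$, we get $|A|\ge n/3-|S|$. Plugging in yields $|S|\ge \alpha(n/3-|S|)$, and rearranging gives exactly $|S|\ge \alpha n/\bigl(3(1+\alpha)\bigr)$. (If the intermediate inequality $|A|\ge n/3-|S|$ is vacuous because $|S|\ge n/3$, we are already done since $n/3\ge \alpha n/(3(1+\alpha))$.)

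There is no real obstacle here: the entire argument is a one-line application of the expansion hypothesis to the smaller ``side'' of the separator, combined with the trivial inclusion $N_G(A)\subseteq S$ and the size constraint $|B|\le 2n/3$. The only thing worth being careful about is the ``WLOG $|A|\le|B|$'' step, which is what guarantees $|A|\le n/2$ so that Definition~\ref{def-expander} is directly applicable without having to invoke the extension to sets of size greater than $n/2$ discussed after that definition.
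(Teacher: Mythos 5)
Your proof is correct and follows essentially the same route as the paper's: take the smaller side $A$ of the separator (so $|A|\le n/2$), observe $N_G(A)\subseteq S$ to get $|S|\ge\alpha|A|$, combine with $|A|+|S|\ge n/3$ (which comes from $|B|\le 2n/3$), and rearrange. The only difference is cosmetic --- the paper multiplies $|S|\ge\alpha|A|$ by $1/\alpha$ and adds it to $|A|+|S|\ge n/3$, while you substitute and rearrange --- but these are the same algebra.
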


\begin{proof}
Let $S$ be a separator in $G$ of size $|S|=s$, separating $A$ and $B$, with $|A|=a$, $|B|=b$; we assume $a\le b\le 2n/3$. Then $a+s\ge n/3$. Clearly, $N_G(A)\subseteq S$. Since $a\le n/2$, by the definition of an $\alpha$-expander we get $s-\alpha a\ge 0$. Multiplying this inequality by $1/\alpha$ and summing with $a+s\ge n/3$, we obtain: $s(1+1/\alpha)\ge n/3$, or $s\ge \frac{\alpha n}{3(1+\alpha)}$.
\end{proof}

Perhaps more surprisingly, it turns out that the opposite implication is true in a well defined quantitative sense -- graphs without small separators contain large induced expanders.

\begin{propos}\label{prop-sep2}
Let $\beta>0$ be a constant. If all separators in a graph $G$ on $n$ vertices are of size at least $\beta n$, then $G$ contains an induced $(3\beta/2)$-expander on at least $2n/3$ vertices.
\end{propos}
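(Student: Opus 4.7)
My plan is to adapt the peeling procedure of Lemma~\ref{lem1}, running it until the residual induced subgraph is a $(3\beta/2)$-expander, and using the hypothesis on separators of $G$ to control how much has been peeled away.

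Start with $G' := G$ and $Z := \emptyset$. As long as there exists $A \subset V(G')$ with $|A| \le |V(G')|/2$ and $|N_{G'}(A)| < (3\beta/2)|A|$, delete $A$ and update $Z := Z \cup A$, $G' := G[V\setminus Z]$. By construction the surviving $G'$ is a $(3\beta/2)$-expander, so everything reduces to showing that the final $|Z|$ is at most $n/3$, which gives $|V(G')|\ge 2n/3$.

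Two book-keeping facts drive the argument. First, boundaries telescope: any $v \in V\setminus Z$ with a neighbor in $Z$ must lie in $N_{G_{i-1}}(A_i)$ for some step $i$, giving
$$
|N_G(Z)| \le \sum_i |N_{G_{i-1}}(A_i)| < (3\beta/2)\,|Z|.
$$
Second, a single deletion cannot overshoot: if $|Z_{i-1}| \le n/3$ then $|A_i| \le (n-|Z_{i-1}|)/2$, hence $|Z_i| \le (n+|Z_{i-1}|)/2 \le 2n/3$. Consequently, if $|Z|$ ever crossed $n/3$, then at the first such step $i$ we would have $n/3 < |Z_i| \le 2n/3$ and $|N_G(Z_i)| < (3\beta/2)\cdot(2n/3) = \beta n$.

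The finishing move treats $S := N_G(Z_i)$ as a separator of $G$ with sides $A := Z_i$ and $B := V\setminus (Z_i\cup S)$. Both parts have size at most $2n/3$ (the first by $|Z_i|\le 2n/3$, the second since $B \subseteq V\setminus Z_i$ has fewer than $2n/3$ vertices) and, by construction, no edges run between $A$ and $B$. The hypothesis then forces $|S|\ge \beta n$, contradicting $|S|<\beta n$ above; thus $|Z|\le n/3$ throughout, as required.

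The main obstacle I anticipate is the degenerate subcase $B = \emptyset$, in which $N_G(Z_i)$ does not genuinely separate $V$. Using $|N_G(Z_i)| = n-|Z_i|$ together with $|N_G(Z_i)| < (3\beta/2)|Z_i|$ and $|Z_i|\le 2n/3$, a quick check shows this can happen only when $\beta > 1/3$, comfortably outside the range typically intended by ``weak expansion''; small adjustments to the procedure (for instance, capping the per-step deletion at $|V(G')|/3$, or treating such a $Z_i$ itself as a candidate separator with any partition of its complement) handle the remaining sliver.
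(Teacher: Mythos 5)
Your proposal is correct and takes essentially the same route as the paper: the same peeling procedure, the same telescoping bound showing $|N_G(Z)| < (3\beta/2)|Z|$, the same first-crossing bookkeeping giving $n/3 < |Z| \le 2n/3$, and the same contradiction via the separator hypothesis applied to $N_G(Z)$. The degenerate case $B=\emptyset$ you flag is not actually a gap, since Definition~\ref{def-separator} only asks for a partition $V = A\cup B\cup S$ with $|A|,|B|\le 2n/3$ and no $A$--$B$ edges, which permits an empty $B$; it is nonetheless a reasonable technicality to notice, and the paper does not comment on it.
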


The proof is somewhat similar to Lemmas~\ref{lem1} and~\ref{lem3}.

\begin{proof}
Start with $G'=G$, and delete repeatedly subsets $A$ of size at most $|V(G')|/2$ not expanding themselves into $V(G')$ by at least the factor of $3\beta/2$, each time updating $G':=G'-A$. Let $Z$ be the union of the deleted sets, clearly $|N_G(Z)|<3\beta |Z|/2$. If the size of $Z$ ever reaches $n/3$, focus on the moment it happens for the first time, let $A$ be the last set deleted, and let $Z'=Z\setminus A$. Then $|A|\le (n-|Z'|)/2$ and $|Z'|\le n/3$. Combining these two inequalities, we get: $|Z|=|Z'|+|A|\le 2n/3$. The set $N_G(Z)$ forms a separator in $G$ (separating between $Z$ and $V(G)\setminus (Z\cup N_G(Z))$), hence by the proposition's assumption its size is at least $\beta n\ge 3\beta|Z|/2$ -- a contradiction. We conclude that the deletion process stops with $|Z|<n/3$, and the final graph of the process is a $(3\beta/2)$-expander on at least $2n/3$ vertices, as required.
\end{proof}

The above arguments can easily be extended to sublinear separators and subconstant expansion.

We can learn a very important qualitative lesson here. It turns out that weak expanders and graphs without small separators are essentially the same thing (at least if we do not care that much about multiplicative constants involved). Thus, when aiming to prove results about graphs without sublinear separators, we can choose instead to operate in the realm of weak expanders. This simple yet powerful connection between two central graph theoretic notions (expanders and separators), usually perceived as belonging to quite different worlds (extremal graph theory and structural graph theory, respectively) can be quite fruitful and illuminating.

\section{Finding large expanding subgraphs}\label{s-subgraphs}
Given the prominence of expanders and their usability, it is tempting to argue that every graph is an expander, or at least contains a large expander inside. This is obviously too much wishful thinking. Not every graph is an expander, moreover essentially every standard notion of expansion is rather fragile -- adding an isolated vertex to a strong expander $G$ produces a non-expanding graph $G'$. Not every graph, even of average degree bounded away from zero, contains a substantially sized expanding subgraph. Indeed, planar graphs, or more generally graphs of bounded genus have sublinear separators~\cite{LT79, GHT84}. Hence, applying Proposition~\ref{prop-sep1}, we conclude that such graphs do not contain expanding subgraphs of super-constant size. Recall also the example of Moshkovitz and Shapira~\cite{MS18}, discussed in Section~\ref{s-defs}.

There is a partial remedy to the problem with this general approach. Shapira and Sudakov~\cite{SS15} and later Montgomery~\cite{Mon15} argued that {\em every} graph $G$ contains a very weak expander $G^*$ of nearly the same average degree. Their notion of expansion is different -- the expansion required is gradual in terms of subset sizes; for an $m$-vertex graph to be a weak expander, vertex sets of size $m^c$, $0<c<1$, should expand by a constant factor, whereas linearly sized vertex sets are required to expand only by about $1/\log m$ factor. (We are rather informal here in our descriptions, see the actual papers~\cite{SS15,Mon15} for accurate definitions.) Neither of these results guarantees a  (weakly) expanding subgraph on linearly many vertices.
%One should probably add here that both papers~\cite{SS15,Mon15} are mainly concerned not with expanders, but rather with finding complete minors of relatively small order; the expansion statements serve as a technical tool and are pretty much tailored to that particular target.
Much earlier, Koml\'os and Szemer\'edi, in their work on topological cliques in graphs~\cite{KS94,KS96}, presented a fairly general scheme for arguing about the existence of weak expanders in any given graph; their scheme does not provide -- naturally -- for finding linearly sized expanders, or expanders with constant expansion of subsets.

We now present a fairly simple sufficient condition guaranteeing the existence of a large expanding subgraph in a given graph. This condition is based on the quantitative notion of local sparseness.
\begin{defin}\label{def-sparseness}
Let $c_1>c_2>1$, $0<\beta<1$. A graph $G=(V,E)$ on $n$ vertices is called a {\em $(c_1,c_2,\beta)$-graph} if
\begin{enumerate}
\item
$\frac{|E|}{|V|}\ge c_1$\,;
\item every vertex subset $W\subset V$ of size $|W|\le \beta n$ spans fewer than $c_2|W|$ edges.
\end{enumerate}
\end{defin}
In words, the above condition says that relatively small sets are sizably sparser than the whole graph. It has been used in recent papers~\cite{Kri16, Kri18} of the author.

How natural or common is this condition? As the (very easy) proposition below shows, most sparse graphs are locally sparse.

\begin{propos}\label{p-sparse1}
Let $c_1>c_2>1$ be reals. Define $\beta=\left(\frac{c_2}{5c_1}\right)^{\frac{c_2}{c_2-1}}$. Let $G$ be a random graph drawn from the probability distribution $G\left(n,\frac{c_1}{n}\right)$. Then with high probability every set of $k\le \beta n$ vertices of $G$ spans fewer than $c_2k$ edges.
\end{propos}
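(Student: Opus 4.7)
My plan is to apply a first-moment union bound indexed by the size of a would-be bad subset. For $1\le k\le \beta n$ let $X_k$ denote the number of vertex subsets of size $k$ that span at least $\lceil c_2 k\rceil$ edges in $G$. Since $\binom{k}{2}<c_2 k$ for $k<2c_2+1$, the variables $X_k$ vanish deterministically for small $k$, and the sum effectively starts at $k_0:=\lceil 2c_2+1\rceil$. It suffices via Markov's inequality to prove $\sum_{k=k_0}^{\beta n}E[X_k]=o(1)$.

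By linearity and a union bound over the $\lceil c_2 k\rceil$ potential edges,
$$E[X_k]\ \le\ \binom{n}{k}\binom{\binom{k}{2}}{\lceil c_2 k\rceil}\left(\frac{c_1}{n}\right)^{c_2 k}.$$
Applying $\binom{n}{k}\le(en/k)^k$ and $\binom{\binom{k}{2}}{c_2 k}\le(ek/(2c_2))^{c_2 k}$ and simplifying produces
$$E[X_k]\ \le\ \left[A\cdot(k/n)^{c_2-1}\right]^k,\qquad A:=e^{1+c_2}\left(\frac{c_1}{2c_2}\right)^{c_2}.$$
The choice of $\beta$ is engineered so that the constant factor $A\beta^{c_2-1}$ collapses: indeed $A\beta^{c_2-1}=e\,(e/10)^{c_2}$, which is strictly less than $\theta:=e^2/10<3/4$ for every $c_2>1$. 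Hence for $k\le\beta n$ one has the convenient factorisation $E[X_k]\le \theta^k\cdot\bigl(k/(\beta n)\bigr)^{(c_2-1)k}$ into two factors in $[0,1]$.

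To bound the sum I would split at $k_1:=\lceil\log n\rceil$. For the tail $k\ge k_1$, the cruder bound $E[X_k]\le\theta^k$ already yields a geometric sum of order $\theta^{\log n}=n^{\log\theta}$, which is polynomially small because $\log\theta<0$. For the head $k_0\le k<k_1$ the factor $\theta^k$ is only a constant, but the second factor now carries the day: differentiating $(c_2-1)k\log(k/(\beta n))$ in $k$ shows that $(k/(\beta n))^{(c_2-1)k}$ is decreasing on $[k_0,\beta n/e]$, so on the head each term is at most $(k_0/(\beta n))^{(c_2-1)k_0}=O(n^{-(c_2-1)k_0})$; summing the $O(\log n)$ head terms produces $O\bigl(\log n\cdot n^{-(c_2-1)k_0}\bigr)=o(1)$, since $(c_2-1)k_0>0$ strictly.

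The main obstacle is the regime where $c_2$ is only a hair above $1$: then both $k_0$ and $|\log\theta|$ are modest, so each of the two decay rates is slow. Both are strictly positive however, so the argument still closes for every fixed $c_2>1$. The role of the specific constant $5$ in $\beta=(c_2/(5c_1))^{c_2/(c_2-1)}$ is exactly to force $A\beta^{c_2-1}$ below a universal constant less than one, independently of $c_1,c_2$; this uniform slack is the common input that makes both halves of the sum work without any further tuning.
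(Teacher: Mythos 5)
Your proof is correct and follows essentially the same first-moment strategy as the paper: both bound the probability by $\sum_k \binom{n}{k}\binom{\binom{k}{2}}{c_2 k}p^{c_2 k}$, simplify to $\sum_k \bigl[A\,(k/n)^{c_2-1}\bigr]^k$ with the same $A$, and exploit the fact that the chosen $\beta$ forces $A\beta^{c_2-1}=e(e/10)^{c_2}<1$. The only difference is tactical: the paper splits the sum at $k=\sqrt{n}$ (bounding the head by a geometric series in $n^{-(c_2-1)/2}$ and showing each tail term is $o(n^{-1})$), whereas you split at $k=\log n$ (bounding the head via a monotonicity argument on $(k/\beta n)^{(c_2-1)k}$ and the tail by $\theta^{\log n}$); both choices close the argument.
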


\begin{proof}
The probability in $G(n,p=c_1/n)$ that there exists a vertex subset violating the required property is at most
\begin{eqnarray*}
&&\sum_{i\le \beta n}\binom{n}{i}\binom{\binom{i}{2}}{c_2i}\cdot p^{c_2i}\le
 \sum_{i\le \beta n}\left(\frac{en}{i}\right)^i\cdot \left(\frac{eip}{2c_2}\right)^{c_2i}
=\sum_{i\le\beta n}\left[\frac{en}{i}\cdot\left(\frac{ec_1i}{2c_2n}\right)^{c_2}\right]^i\\
&=&\sum_{i\le\beta n}\left[\frac{e^{c_2+1}c_1^{c_2}}{(2c_2)^{c_2}}\cdot\left(\frac{i}{n}\right)^{c_2-1}\right]^i\,.
\end{eqnarray*}

Denote the $i$-th summand of the last sum by $a_i$. Then, if $i\le n^{1/2}$ we get: $a_i\le \left(O(1)n^{-\frac{c_2-1}{2}}\right)^i$, implying $\sum_{1\le i\le n^{1/2}}a_i=o(1)$. For $n^{1/2}\le i\le \beta n$, we have, recalling the expression for $\beta$:
$$
a_i\le \left[\frac{e^{c_2+1}c_1^{c_2}}{(2c_2)^{c_2}}\cdot\left(\frac{c_2}{5c_1}\right)^{c_2}\right]^i= \left(e\cdot\left(\frac{e}{10}\right)^{c_2}\right)^i=o(n^{-1})\,.
$$
It follows that $\sum_{i\le\beta n}a_i=o(1)$, and the desired property of the random graphs holds with high probability.
\end{proof}

One can also cap easily the typical maximum degree in (a nearly spanning subgraph of) a random graph.

\begin{propos}\label{p-sparse2}
For every $C>0$ and all sufficiently small  $\delta>0$ the following holds. Let $G$ be a random graph drawn from the probability distribution $G\left(n,\frac{C}{n}\right)$. Then with high probability every set of $\frac{\delta}{\ln\frac{1}{\delta}} n$ vertices of $G$ touches fewer than $\delta n$ edges.
\end{propos}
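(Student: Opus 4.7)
The plan is to apply a standard Chernoff plus union bound argument, where the key point is that the sparsity ratio $k/n=\delta/\ln(1/\delta)$ has been calibrated precisely so that the exponential decay from the large-deviation estimate beats the entropy of the union bound.

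First I would fix a vertex subset $W\subset V(G)$ of size $k=\frac{\delta n}{\ln(1/\delta)}$. The number of edges of $G$ touching $W$, call it $X_W$, is a sum of independent indicators over at most $k(n-k)+\binom{k}{2}\le kn$ potential edges, each present with probability $p=C/n$. So $X_W$ is stochastically dominated by $\Bin(kn,C/n)$, with mean $\mu\le Ck=\frac{C\delta n}{\ln(1/\delta)}$. Since $\delta n\gg \mu$ for $\delta$ small, I would apply the upper-tail Chernoff bound in the form $\Pr[X\ge t]\le (e\mu/t)^{t}$, obtaining
$$
\Pr[X_W\ge \delta n]\le \left(\frac{eC}{\ln(1/\delta)}\right)^{\delta n}\,.
$$

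Next I would union-bound over all choices of $W$. Using $\binom{n}{k}\le (en/k)^k=\left(\frac{e\ln(1/\delta)}{\delta}\right)^k$, the probability that some bad $W$ exists is at most
$$
\left(\frac{e\ln(1/\delta)}{\delta}\right)^{k}\left(\frac{eC}{\ln(1/\delta)}\right)^{\delta n}\,.
$$
Taking logarithms and recalling $k\ln(1/\delta)=\delta n$, the first factor contributes at most $k\bigl(1+\ln\ln(1/\delta)+\ln(1/\delta)\bigr)=\bigl(1+o(1)\bigr)\delta n$, whereas the second factor contributes $\delta n\bigl(1+\ln C-\ln\ln(1/\delta)\bigr)$. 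Summing, the total log-probability is at most $\delta n\bigl(2+\ln C-(1-o(1))\ln\ln(1/\delta)\bigr)$, which tends to $-\infty$ for any fixed $C$ once $\delta$ is taken small enough (so $\ln\ln(1/\delta)$ dominates). Hence the bad event has probability $o(1)$.

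There is no real obstacle here beyond being careful with the calibration: the exponent in the Chernoff bound on a single $W$ is roughly $-\delta n\ln\ln(1/\delta)$, while the entropy of the choice of $W$ is roughly $+\delta n\ln(1/\delta)/\ln(1/\delta)\cdot\ln(1/\delta)=\delta n$; the crucial gain of a $\ln\ln(1/\delta)$ factor in the exponent is exactly what the denominator $\ln(1/\delta)$ in the definition of $k$ was engineered to produce. One small bookkeeping point I would make explicit is that ``edges touched by $W$'' includes edges inside $W$ as well as edges from $W$ to $V\setminus W$; both are accounted for by the $\le kn$ potential edges used above, so the dominating binomial is valid without any additional argument.
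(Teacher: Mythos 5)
Your argument is correct and complete. The paper explicitly omits the proof of Proposition~\ref{p-sparse2} as ``straightforward,'' and your Chernoff-plus-union-bound calculation, with the key calibration that the choice $k=\delta n/\ln(1/\delta)$ makes the $-\delta n\ln\ln(1/\delta)$ gain in the Chernoff exponent dominate the $\approx\delta n$ entropy cost of the union bound, is precisely the intended standard argument (and matches the style of the binomial tail estimate used elsewhere in the paper, e.g.\ in the proof of Theorem~\ref{t-Ramsey}).
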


We omit the straightforward proof.

Observe that if $G=(V,E)$ satisfies the conclusion of the above proposition, then by deleting $\frac{\delta}{\ln\frac{1}{\delta}} n$ vertices of highest degrees in $G$, one obtains a spanning subgraph $G'=(V',E')$ on $|V'|=\left(1-\frac{\delta}{\ln\frac{1}{\delta}}\right) n$ vertices and with $|E'|\ge |E|-\delta n$ edges, in which all degrees are at most $2\ln(1/\delta)$. (Otherwise, all deleted vertices are of degree at least $2\ln(1/\delta)$, forming a subset touching at least $\delta n$ edges -- a contradiction.)

%The above two propositions can be used to argue that a sparse random graph contains typically a linearly sized locally sparse subgraph of bounded maximum degree.

We now formulate the main result of this section.

\begin{thm}[\cite{Kri18}]\label{t-sparse}
Let $c_1>c_2>1$, $0<\beta<1$, $\Delta>0$. Let $G=(V,E)$ be a graph on  $n$ vertices, satisfying:
\begin{enumerate}
\item $\frac{|E|}{|V|}\ge c_1$\,;
\item every vertex subset $U\subset V$ of size $|U|\le \beta n$ spans fewer than $c_2|U|$ edges;
\item $\Delta(G)\le \Delta$.
\end{enumerate}
Then $G$ contains an induced subgraph $G^*=(V^*,E^*)$ on at least $\beta n$ vertices which is an $\alpha$-expander, for $\alpha=\frac{c_1-c_2}{\Delta\cdot\left\lceil\log\frac{1}{\beta}\right\rceil}$.
\end{thm}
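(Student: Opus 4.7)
The plan is a greedy peeling in the spirit of Lemmas~\ref{lem1}--\ref{lem3}. Initialize $G_0:=G$; while $G_i$ is not an $\alpha$-expander, pick a witness $A_i\subseteq V(G_i)$ with $|A_i|\le|V(G_i)|/2$ and $|N_{G_i}(A_i)|<\alpha|A_i|$, and set $G_{i+1}:=G_i-A_i$. The process terminates with $G^\ast$ an $\alpha$-expander by construction, so the entire content of the theorem is the lower bound $|V(G^\ast)|\ge\beta n$. To certify it I would track the potential
$$\Phi(H):=|E(H)|-c_2|V(H)|.$$
Condition~(1) gives $\Phi(G)\ge(c_1-c_2)n$, and condition~(2) applied to the whole vertex set of an induced subgraph $H$ forces $\Phi(H)<0$ whenever $|V(H)|\le\beta n$; hence it suffices to show $\Phi(G^\ast)\ge 0$, which will automatically imply $|V(G^\ast)|>\beta n$.

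The per-step drop is easy to bound when $|A_i|\le\beta n$: condition~(2) applied to $A_i\subseteq V(G)$ bounds the internal edges of $A_i$ by $c_2|A_i|$, while condition~(3) bounds the edges across to $N_{G_i}(A_i)$ by $\Delta|N_{G_i}(A_i)|<\Delta\alpha|A_i|$, so
$$\Phi(G_i)-\Phi(G_{i+1})<(c_2+\Delta\alpha)|A_i|-c_2|A_i|=\Delta\alpha|A_i|.$$
If every deletion were of this ``small'' type, telescoping would give $\Phi(G)-\Phi(G^\ast)<\Delta\alpha\cdot n=(c_1-c_2)n/\lceil\log(1/\beta)\rceil$, comfortably within the budget $(c_1-c_2)n$.

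The main obstacle is that a witness $A_i$ can have any size up to $|V(G_i)|/2$, so as long as $|V(G_i)|>2\beta n$ a ``large'' deletion with $|A_i|>\beta n$ may occur, and condition~(2) no longer directly controls $e_{G_i}(A_i)$: the naive degree bound $e_{G_i}(A_i)\le\Delta|A_i|/2$ is catastrophic. The factor $\lceil\log(1/\beta)\rceil$ in the denominator of $\alpha$ is precisely there to absorb these large steps. My strategy would be to organize the peeling into dyadic phases indexed by $\lfloor\log_2(n/|V(G_i)|)\rfloor\in\{0,\dots,\lceil\log(1/\beta)\rceil-1\}$, exhaust all small-set deletions within a phase before allowing a large one, and choose each large witness to be of \emph{minimum} cardinality among non-expanders. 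Minimality together with the small-step exhaustion guarantees that every subset of $A_i$ of size at most $\beta n$ is $\alpha$-expanding inside $G_i$, so $A_i$ can be partitioned into $\lceil|A_i|/\beta n\rceil$ chunks of size $\le\beta n$ on which condition~(2) is applicable block-wise, while the already-established small-set expansion should control the inter-chunk edges, replacing $\Delta|A_i|/2$ by a bound of the form $(c_2+O(\Delta\alpha))|A_i|$ and so keeping the per-step drop at $O(\Delta\alpha|A_i|)$.

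The hardest step, and the one I expect to be the main hurdle, is precisely the inter-chunk edge bound for a large $A_i$: local sparseness does not see these edges, and they must be squeezed into an $O(\Delta\alpha|A_i|)$ budget using only the internal $(\beta n,\alpha)$-expansion produced by the preceding small-step peeling. Calibrating this estimate is exactly what forces the denominator $\lceil\log(1/\beta)\rceil$ in $\alpha$, as it parcels the fixed budget $(c_1-c_2)n$ equally among the $\lceil\log(1/\beta)\rceil$ dyadic phases (one large step per phase, plus the cumulated small-step losses). Once this accounting closes, $\Phi(G^\ast)\ge 0$, and then condition~(2) forces $|V(G^\ast)|>\beta n$, completing the proof.
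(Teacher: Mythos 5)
Your peeling-plus-potential outline is a genuinely different strategy from the paper's, but it has a real gap exactly at the place you flag as the ``main hurdle,'' and I do not believe the inter-chunk edge bound can be salvaged by the tools you list. Small-set expansion (having $|N_{G_i}(B)|\ge\alpha|B|$ for every chunk $B$ of size $\le\beta n$) gives a \emph{lower} bound on the size of neighborhoods; it says nothing about an \emph{upper} bound on the number of edges running between two chunks $B_j,B_{j'}\subseteq A_i$. Local sparseness (condition~(2)) is likewise silent here, since it only applies to sets of size at most $\beta n$, while $B_j\cup B_{j'}$ and the full $A_i$ can exceed that. Concretely, a set $A_i$ of size $2\beta n$ could consist of two independent chunks joined by a $(\Delta/2)$-regular bipartite graph: each chunk spans zero internal edges (so condition~(2) is satisfied with room to spare) and each chunk expands well, yet $e_{G_i}(A_i)=\Theta(\Delta|A_i|)$, not $(c_2+O(\Delta\alpha))|A_i|$. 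So the per-step drop in $\Phi$ for a large deletion is not bounded by $O(\Delta\alpha|A_i|)$, the dyadic accounting does not close, and the argument stalls.

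The paper's proof avoids this obstacle by never peeling away a large dense set in the first place. It starts from a \emph{minimal-by-inclusion} induced subgraph $H$ of edge density at least $c_1$; minimality forces every subset $U\subseteq V(H)$ to \emph{touch} at least $c_1|U|$ edges, which combined with condition~(2) immediately gives $(c_1-c_2)|U|/\Delta$ vertex expansion for all $|U|\le\beta n$. For a medium set $\beta n<|U|\le|V(H)|/2$: either $U$ spans few ($\le(c_1-\delta)|U|$) edges, in which case it also expands, or $U$ spans many edges --- and then, instead of deleting $U$, the paper \emph{recurses into} $G[U]$, which is an induced subgraph of at most half the order and density still at least $c_1-\delta$. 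The $\lceil\log(1/\beta)\rceil$ factor arises from the number of such halvings (each recursion halves the order, and one cannot go below $\beta n$), with a tiny density loss $\delta$ per level. That move --- diving into a dense subset rather than discarding a sparse one --- is the conceptual step missing from your proposal; once you have it, a potential function is no longer needed and the size bound $|V(G^*)|\ge\beta n$ falls out of the local sparseness assumption alone.
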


Putting it informally, every locally sparse graph $G$ of bounded maximum degree contains a linearly sized expander $G^*$. In our terminology, the first two conditions above say precisely that $G$ is a $(c_1,c_2,\beta)$-graph. They are spelled out in full in the statement above so as to make it self-contained.

\medskip

\begin{proof}[Proof of Theorem~\ref{t-sparse} (sketch).]
The proof proceeds in rounds. We describe the first round here, the rest is fairly similar.

Choose a constant $0<\delta\ll c_1-c_2$. Let $H=(W,F)$ be a minimal by inclusion non-empty induced subgraph of $G$, satisfying $|F|/|W|\ge c_1$. (Such a subgraph exists, as $G$ itself meets this condition.) Due to the local sparseness assumption, we have $|W|>\beta n$. Also, every subset $U\subseteq W$ touches at least $c_1|U|$ edges of $H$.  Otherwise, deleting $U$ is easily seen to produce a smaller induced subgraph $H'$, still meeting the requirement stated in the definition of $H$ -- a contradiction. (We could add that $H$ is connected due to its minimality, but we do not need this property here.)

Let us analyze now the expansion properties of $H$. If a subset $U\subset W$ has at most $\beta n$ vertices, then it spans at most $c_2|U|$ edges in $G$, and thus in $H$, and yet touches at least $c_1|U|$ edges. This shows that at least $(c_1-c_2)|U|$ edges of $H$ leave $U$. Recalling the maximum degree assumption, we conclude that $U$ has at least $(c_1-c_2)|U|/\Delta$ neighbors outside of it in $H$.

At this point of the proof we have assured that $H$ is a $(\beta n, \frac{c_1-c_2}{\Delta})$-expander. We can complete the proof by appealing to the general (and somewhat inexplicit) statement of Theorem~\ref{t-defs}. We prefer however, just like in the original paper~\cite{Kri18}, to provide a self-contained argument, delivering also a better (and explicit) estimate for the order of the resulting subgraph and for its expansion strength.

Consider now medium-sized subsets $U$ in $H$. Let $\beta n<|U|\le |W|/2$. Such $U$ touches at least $c_1|U|$ edges of $U$. If every such $U$ spans at most $(c_1-\delta)|U|$ edges, we get it expanding to $\delta|U|/\Delta$ vertices outside, thus indicating that $H$ is a desired expander. Otherwise there is a subset $U$, $\beta n<|U|\le |W|/2$, spanning at least $(c_1-\delta)|U|$ edges. We can now switch our attention to the induced graph $H[U]=G[U]$, whose order is at most half that of the original graph, yet we did not give much in terms of its density, compared to the assumption on $G$ -- it is now at least $c_1-\delta$.

Iterating this argument at most $O(\log(1/\beta))$ times, we eventually arrive at the desired linearly sized $\alpha$-expander inside $G$.
\end{proof}

The assumption about bounded maximum degree $\Delta(G)=O(1)$ is used in the proof only to trade edge expansion for vertex expansion.

The proof of Theorem~\ref{t-sparse} is inherently non-algorithmic, as it uses the existence of a minimal by inclusion subgraph $H$ of prescribed density, and such a subgraph is hard to find efficiently. It turns out that with some loss in constants, one can provide an algorithmic proof of the theorem. The argument for such a proof is based on the fact that the proof of (the first part of) the Cheeger inequality (\ref{lb-Cheeger}) can be made constructive in the sense that, given a graph $G=(V,E)$ on $n$ vertices, one can find in time polynomial in $n$ a subset $U\subset V$, $\VOL_G(U)\le \VOL_G(V)/2$, satisfying $e_G(U,V\setminus U)\le \sqrt{2\mu(G)}\cdot \VOL_G(U)$. (See~\cite{Alo86}, or~\cite[Ch. 2]{Chung-book}, or~\cite{Chu10}, or~\cite[Sect. 4.5]{HLW06}.) This yields the following algorithmic result.

\begin{thm}[\cite{Kri18}]\label{t-sparse2}
Let $c_1>c_2>1$, $0<\beta<1$, $\Delta>0$. There exist a constant $\alpha=\alpha(c_1,c_2,\beta,\Delta)>0$ and an algorithm, that, given an $n$-vertex graph $G=(V,E)$ with $\Delta(G)\le \Delta$ and $|E|/|V|\ge c_1$, finds in time polynomial in $n$ a subset $U\subset V$ of size $|U|\le \beta n$, spanning at least $c_2|U|$ edges in $G$, or an induced $\alpha$-expander $G^*=(V^*,E^*)\subseteq G$ on at least $\beta n$ vertices.
\end{thm}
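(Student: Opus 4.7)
The plan is to emulate the proof of Theorem~\ref{t-sparse}, replacing its only non-constructive step---finding a minimal by inclusion induced subgraph of density at least $c_1$---by two polynomial-time primitives: a densest subgraph algorithm (for example, Charikar's LP relaxation or Goldberg's max-flow procedure) and the constructive Cheeger inequality described just before the theorem.

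First I would compute a densest subgraph $H_0$ of $G$; its density $d_0=|E(H_0)|/|V(H_0)|$ is at least $c_1$ because $G$ itself satisfies this density bound. Maximality of $d_0$ immediately gives the touching property used in the non-algorithmic proof: for every $S\subseteq V(H_0)$, at least $d_0|S|$ edges of $H_0$ have an endpoint in $S$ (otherwise $H_0[V(H_0)\setminus S]$ would be denser than $H_0$). If $|V(H_0)|\le\beta n$ we output $V(H_0)$ as a violator, since it spans at least $c_1|V(H_0)|>c_2|V(H_0)|$ edges. Otherwise $|V(H_0)|>\beta n$ and we enter the main loop.

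At step $i$ we hold a subgraph $H_i$ of $G$ on more than $\beta n$ vertices, which is a densest subgraph of its ambient host and therefore has the touching property with parameter $d_i\ge c_1-i\sqrt{2\mu^*}\,\Delta$; here $\mu^*$ is a spectral threshold to be calibrated. I would compute in polynomial time an approximation of $\mu(H_i)$. If $\mu(H_i)\ge\mu^*$, the easy half of~(\ref{lb-Cheeger}) gives $h(H_i)\ge\sqrt{\mu^*/2}$, which combined with $\Delta(H_i)\le\Delta$ upgrades to vertex expansion $\alpha=\sqrt{\mu^*/2}/\Delta$; output $H_i$. Otherwise the constructive Cheeger procedure returns $A_i\subset V(H_i)$ with $\VOL_{H_i}(A_i)\le\VOL_{H_i}(V(H_i))/2$ and $e_{H_i}(A_i,V(H_i)\setminus A_i)\le\sqrt{2\mu^*}\,\VOL_{H_i}(A_i)\le\sqrt{2\mu^*}\,\Delta|A_i|$. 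Combining the touching bound with this cut bound gives
\[
e_{H_i}(A_i)\ge d_i|A_i|-\sqrt{2\mu^*}\,\Delta|A_i|\ge\bigl(c_1-(i+1)\sqrt{2\mu^*}\,\Delta\bigr)|A_i|.
\]
If $|A_i|\le\beta n$, my calibration of $\mu^*$ makes this exceed $c_2|A_i|$, so $A_i$ is a valid violator and is output. If $|A_i|>\beta n$, set $H_{i+1}$ to be a densest subgraph of $H_i[A_i]$ and iterate; should $|V(H_{i+1})|\le\beta n$, output $V(H_{i+1})$ instead, whose density is at least $c_1-(i+1)\sqrt{2\mu^*}\,\Delta>c_2$. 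The key termination fact is $\VOL(H_{i+1})\le 2e_{H_i}(A_i)\le\VOL_{H_i}(A_i)\le\VOL(H_i)/2$: the total volume halves every round, and since density stays above $c_2$ we have $|V(H_i)|\le\VOL(H_i)/(2c_2)$, forcing the vertex count below $\beta n$ within $O(\log(\Delta/\beta))$ iterations.

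The main obstacle is calibrating $\mu^*$ to meet three competing requirements at once: (a) $\sqrt{2\mu^*}\,\Delta<c_1-c_2$, so that low-conductance cuts with $|A_i|\le\beta n$ are guaranteed violators; (b) the cumulative density loss $O(\log(1/\beta))\cdot\sqrt{2\mu^*}\,\Delta$ stays below $(c_1-c_2)/2$ across all iterations, keeping every $H_i$ dense enough; (c) $\mu^*$ is still bounded away from zero in terms of $c_1,c_2,\beta,\Delta$ alone, so that $\alpha=\sqrt{\mu^*/2}/\Delta$ is a genuine positive constant. A direct calculation shows $\mu^*=\Theta\bigl((c_1-c_2)^2/(\Delta^2\log^2(1/\beta))\bigr)$ satisfies all three, yielding the required $\alpha=\alpha(c_1,c_2,\beta,\Delta)>0$.
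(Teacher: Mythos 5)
Your proposal is correct and follows essentially the same route as the paper's (Section~2 of~\cite{Kri18}): replace the non-constructive minimal-by-inclusion subgraph with a polynomial-time densest-subgraph computation (which supplies the same touching property), and use the constructive Cheeger inequality at each round to either certify expansion or extract a low-conductance cut whose interior is dense, iterating with the volume-halving argument you describe. One small slip worth flagging: the easy half of~(\ref{lb-Cheeger}) yields $h(H_i)\ge\mu(H_i)/2\ge\mu^*/2$, not $h(H_i)\ge\sqrt{\mu^*/2}$ (the square root belongs to the hard direction $h\le\sqrt{2\mu}$); this only degrades the final constant $\alpha$ by a polynomial factor and does not affect the conclusion.
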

See Section 2 of~\cite{Kri18} for the proof.

Theorem~\ref{t-sparse} can be used to argue that a supercritical random graph $G(n,c/n)$, $c>1$, contains \whp an induced expander of linear size. This can be quite handy as it allows to extend immediately known properties of expanders to sparse random graphs.

\begin{corol}\label{cor-sparse}
For every $\epsilon>0$ there exist $\alpha,\beta>0$ such that a random graph $G\sim G\left(n,\frac{1+\epsilon}{n}\right)$ contains with high probability an induced bounded degree $\alpha$-expander on at least $\beta n$ vertices.
\end{corol}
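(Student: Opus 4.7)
The plan is to apply Theorem~\ref{t-sparse} to a well-chosen induced subgraph of $G\sim G\bigl(n,(1+\epsilon)/n\bigr)$. We must produce a subgraph $G^*$ on $\Theta(n)$ vertices verifying the three hypotheses: edge density $|E(G^*)|/|V(G^*)|\geq c_1>c_2>1$, the local-sparseness inequality with constant $c_2$, and bounded maximum degree. Two of these come essentially for free. Fix some $c_2\in(1,1+\epsilon)$ and apply Proposition~\ref{p-sparse1}, which supplies some $\beta_0>0$ such that \whp every subset $W\subset V(G)$ of size $|W|\leq\beta_0 n$ spans fewer than $c_2|W|$ edges (a property inherited by every subgraph of $G$). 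Next, for a small parameter $\delta>0$ chosen later, Proposition~\ref{p-sparse2} lets us delete the $\delta n/\ln(1/\delta)$ vertices of highest degree in $G$ to obtain a spanning subgraph $G_1\subseteq G$ with $\Delta(G_1)\leq 2\ln(1/\delta)$ and $|E(G_1)|\geq|E(G)|-\delta n$.

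The nontrivial condition is the density requirement: since $G$ itself has density only $(1+\epsilon)/2$, which is less than $1$ when $\epsilon<1$, we cannot take $G^*=G$ or $G^*=G_1$ and must zoom into a denser region. This is where the $2$-core enters. Let $K$ denote the $2$-core of $G$, obtained by iteratively deleting vertices of degree $\leq 1$. By the classical analysis of the supercritical Erd\H{o}s--R\'enyi random graph (e.g.\ of Pittel--Spencer--Wormald), \whp $|V(K)|\geq cn$ for some $c=c(\epsilon)>0$, and moreover the edge density of $K$ is strictly bounded away from $1$, say $|E(K)|/|V(K)|\geq 1+\eta$ for some $\eta=\eta(\epsilon)>0$. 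The reason the strict inequality holds for every $\epsilon>0$ is that $K$ whp contains a linear number of vertices of degree $\geq 3$, equivalently its ``excess'' $|E(K)|-|V(K)|$ is $\Theta(n)$.

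Set $G^*:=G\bigl[V(K)\cap V(G_1)\bigr]$. Since at most $\delta n/\ln(1/\delta)=o(n)$ vertices are removed from $K$ and, by Proposition~\ref{p-sparse2}, at most $\delta n$ of its edges are lost, choosing $\delta$ sufficiently small in terms of $\epsilon$ (specifically $\delta\ll c\eta$) guarantees that $G^*$ has at least $cn/2$ vertices, edge density at least $1+\eta/2=:c_1$, and maximum degree at most $2\ln(1/\delta)$. Choosing the constant $c_2$ fixed earlier also in $(1,1+\eta/2)$, all three hypotheses of Theorem~\ref{t-sparse} hold for $G^*$, so it contains an induced $\alpha$-expander on at least $\beta_0|V(G^*)|=\Omega(n)$ vertices for some $\alpha,\beta>0$ depending only on $\epsilon$, as required.

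The main obstacle is the $2$-core input invoked above: while the fact that the $2$-core of $G(n,(1+\epsilon)/n)$ has linear order is standard, establishing the strict density gap $|E(K)|/|V(K)|\geq 1+\eta$ — which is what gives the margin over the local-sparseness constant $c_2>1$ needed by Theorem~\ref{t-sparse} — requires either appeal to the precise Pittel--Spencer--Wormald asymptotics for the $2$-core, or a direct branching-process computation showing that a positive fraction of the vertices of $K$ have degree at least $3$. Everything else in the argument is bookkeeping around Theorem~\ref{t-sparse} and the two sparseness propositions.
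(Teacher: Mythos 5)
Your proof is correct and closely parallels the paper's argument in structure; the only genuine difference is the choice of the linearly sized dense host. You use the $2$-core $K$ of $G$, whereas the paper works directly with the giant component $C_1$ and invokes the standard asymptotics $|V_1|=2\epsilon(1+o_\epsilon(1))n$ and $|E_1|/|V_1|=1+(1+o_\epsilon(1))\epsilon^2/3$ as its ``black box'' input. Both choices play exactly the same role: they supply an induced subgraph on $\Theta(n)$ vertices with edge-to-vertex ratio bounded away from $1$, after which capping the maximum degree via Proposition~\ref{p-sparse2}, inheriting local sparseness from Proposition~\ref{p-sparse1}, and invoking Theorem~\ref{t-sparse} is shared bookkeeping. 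Your route has a small conceptual advantage --- minimum degree $2$ makes the density $\ge 1$ automatic and reduces the needed input to ``a positive fraction of $2$-core vertices have degree $\ge 3$'' --- but as you correctly flag, that linear excess still requires a Pittel--Spencer--Wormald-type result or a branching-process calculation, so you have not actually saved on external input; the paper's giant-component density statement is, if anything, the more commonly quoted fact. Two minor points to tidy: (i) you fix $c_2\in(1,1+\epsilon)$ at the outset and then re-constrain it to $(1,1+\eta/2)$ after $\eta$ is introduced --- better to define $\eta$ first and take $c_2\in(1,\min(1+\epsilon,1+\eta/2))$; (ii) when feeding $G^*$ to Theorem~\ref{t-sparse}, the sparseness parameter should be rescaled to $\beta=\min\{\beta_0 n/|V(G^*)|, 1/2\}$ (or similar) so that it lies in $(0,1)$, a harmless adjustment since $|V(G^*)|=\Theta(n)$.
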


\begin{proof}
Due to the standard monotonicity arguments we can assume that $\epsilon$ is small enough where necessary.

We will utilize several (very standard) facts about supercritical random graphs. It is known (see, e.g.,
\cite[Ch.\ 5]{JLR}) that \whp $G\sim G\left(n,\frac{1+\epsilon}{n}\right)$ contains a connected component $C_1=(V_1,E_1)$ (the so called giant component) satisfying:
\begin{eqnarray*}
|V_1|&=&2\epsilon (1+o_{\epsilon}(1))n\,,\\
\frac{|E_1|}{|V_1|} &=& 1+(1+o_{\epsilon}(1))\frac{\epsilon^2}{3}\,.
\end{eqnarray*}

Also, by Proposition~\ref{p-sparse2} \whp every $\frac{\epsilon^3}{10\ln\frac{1}{\epsilon}}n$ vertices of $C_1$ touch at most $\frac{\epsilon^3}{3}n$ edges. Deleting $\frac{\epsilon^3}{10\ln\frac{1}{\epsilon}}n$ vertices of highest degrees from  $C_1$, one gets a graph $G_0=(V_0,E_0)$ of maximum degree $\Delta(G_0)\le 7\ln\frac{1}{\epsilon}$. In addition,
\begin{eqnarray*}
|V_0|&\ge &\left(2\epsilon (1+o_{\epsilon}(1))-\frac{\epsilon^3}{10\ln\frac{1}{\epsilon}}\right)n
=2\epsilon (1+o_{\epsilon}(1))n\,,\\
|E_0|&\ge& |E_1|-\frac{\epsilon^3}{3}n
\ge |V_1|\left(1+\frac{\epsilon^2}{3}+o(\epsilon^2)\right)-\frac{\epsilon^3}{3}n\\
 &\ge& |V_0|\left(1+\frac{\epsilon^2}{3}+o(\epsilon^2)\right)-\frac{\epsilon^3}{3}n\ge |V_0|\left(1+\frac{\epsilon^2}{7}\right).
\end{eqnarray*}
Finally, applying Proposition~\ref{p-sparse1} with $c_1=1+\epsilon$, $c_2=1+\frac{\epsilon^2}{10}$, we get that \whp every $k\le \beta n$ vertices of $G_0$ (with $\beta=\beta(\epsilon)$ from Proposition~\ref{p-sparse1}) span fewer than $(1+\epsilon^2/10)k$ edges. The conditions are set to call Theorem~\ref{t-sparse} and to apply it to $G_0$; we conclude that, given the above likely events, $G_0$ contains a linearly sized $\alpha$-expander.
\end{proof}

We remark here that in order to get this qualitative result, one does not really need to apply the heavy machinery of random graphs --- it is enough actually to argue from the ``first principles". Indeed, given the likely existence of the giant component $C_1$ in the supercritical regime, one can prove easily (for example, through sprinkling) that its density is typically above 1.
%In quantitative terms, the above argument delivers very weak (but constant) expansion; much more accurate results can be obtained by invoking a powerful statement of Ding, Lubetzky and Peres~\cite{DLP14}, describing in great detail a likely structure of the giant component in the  supercritical regime. Working out the details carefully, based on~\cite{DLP14}, should probably deliver the likely existence of a $\Theta(\epsilon)$-expander on $\Theta(\epsilon)n$ vertices. We chose not to perform this (pretty technical) analysis here, relying instead on our main theorem to get a qualitative result quickly.

Let us give yet another illustrative example of how Theorem~\ref{t-sparse} can be utilized. This example comes from the realm of positional games. (The reader can consult monograph~\cite{HKSS-book} for a general background on the subject.) In a {\em Maker--Breaker} game two players, called Maker and Breaker, claim alternately free edges of the complete graph $K_n$, with Maker moving first. Maker claims one edge at a time, while Breaker claims $b\ge 1$ edges (or all remaining fewer than $b$ edges if this is the last round of the game). The integer parameter $b$ is the so-called {\em game bias}. Maker wins the game if the graph of his edges in the end possesses a given monotone graph theoretic property, Breaker wins otherwise, with draw being impossible. In the {\em long cycle game} Maker's goal is to create as long as possible cycle. For $b\ge n/2$,  Bednarska and Pikhurko proved~\cite{BP05} that Breaker has a strategy to force Maker's graph being acyclic. On the other hand, if $b=b(n)$ is such that Maker ends up with at least $n$ edges, his graph contains a cycle (of some length) in the end. We can argue, using expanders, that already for $b=(1-\epsilon)n/2$, Maker can force a linearly long cycle in his graph.

\begin{thm}[\cite{Kri16}]\label{t-MB}
For every $\epsilon>0$ there exists $c>0$ such that for all sufficiently large $n$, when playing the $b$-biased Maker--Breaker game on $E(K_n)$ with $b\le (1-\epsilon)\frac{n}{2}$, Maker has a strategy to create a cycle of length at least $cn$.
\end{thm}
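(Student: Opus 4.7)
The plan is to reduce the game-theoretic question to a random-graph statement via the random-strategy paradigm of Bednarska and \L{}uczak. Maker--Breaker games on finite boards are two-player perfect-information games with no draws, so by a minimax argument it suffices to exhibit a \emph{randomized} strategy for Maker that \whp builds a cycle of length $\Omega(n)$ against every Breaker strategy. Maker's randomized strategy will be the natural one: on each of his turns he claims an edge chosen uniformly at random from the set of currently free edges.

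Under this strategy Maker ends up with $m = \lceil\binom{n}{2}/(b+1)\rceil \ge (1+\epsilon/2)n$ edges (using $b \le (1-\epsilon)n/2$ and $n$ large enough). The generic Bednarska--\L{}uczak reduction then asserts: for every monotone increasing graph property $\mathcal{P}$ satisfied \whp by $G(n,p)$ at the corresponding density $p = \Theta_\epsilon(1/n)$ (specifically $p\ge (2+\delta)/n$ for some $\delta = \delta(\epsilon)>0$), Maker's random graph also satisfies $\mathcal{P}$ \whp, regardless of Breaker's play. We take $\mathcal{P}$ to be the property of containing a cycle of length at least $cn$. Because $p$ is firmly in the supercritical regime $np>1$, Corollary~\ref{cor-sparse} supplies \whp an induced bounded-degree $\alpha$-expander on at least $\beta n$ vertices inside $G(n,p)$, and the long-cycle results for bounded-degree expanders developed in Section~\ref{s-long_pc} extract from such an expander a cycle of length $\Omega(n)$, securing $\mathcal{P}$.

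The main obstacle is the Bednarska--\L{}uczak reduction itself --- formalizing the assertion that Maker's sequentially-uniform moves jointly dominate $G(n,1/(b+1))$ on monotone properties. The standard approach is an edge-by-edge coupling that loses only a constant factor in edge-presence probabilities; that constant is affordable precisely because the bias is bounded away from the extremal value $n/2$, which keeps the free-edge pool a positive-fraction subset of $K_n$ throughout the bulk of the game. A more self-contained alternative would be for Maker to play a deterministic greedy strategy that directly preserves the hypotheses of Theorem~\ref{t-sparse} --- local $(c_1,c_2,\beta)$-sparseness with $c_1>c_2>1$ and bounded maximum degree $\Delta$ --- and then invoke Theorem~\ref{t-sparse} in place of Corollary~\ref{cor-sparse}. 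The delicate point in that route is bounding the number of forbidden edges at each step by a suitable potential argument, which is nontrivial since the sparseness margin $c_1-c_2$ one can afford is necessarily small.
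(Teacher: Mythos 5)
Your high-level route --- Maker plays uniformly at random, derandomize via the perfect-information/no-chance-moves argument, then extract a linear cycle through Theorem~\ref{t-sparse} and Theorem~\ref{t-long_cycle} --- is the paper's proof. The gap is precisely in the step you yourself flag as the ``main obstacle'': there is no \emph{generic} Bednarska--\L{}uczak reduction transferring every monotone increasing property from $G(n,p)$ to Maker's randomly grown graph. What the random-strategy method actually yields (and what the paper's proof sketch re-derives here) is only a one-sided estimate: as long as the free-edge pool has size $\Theta(n^2)$, the probability Maker claims all edges of a fixed set $F$ within the first $O(n)$ rounds is at most $(C/n)^{|F|}$. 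That estimate suffices to run a first-moment union bound over small dense vertex sets and over vertex sets of high total degree --- the same computation as Proposition~\ref{p-sparse1} with $p$ replaced by $C/n$ --- and thereby certify the three bullet-point properties of $M_0$ in the paper's proof, which in turn feed into Theorem~\ref{t-sparse} after degree pruning. It does \emph{not} give a coupling with $G(n,p)$ for arbitrary monotone properties, so you cannot invoke Corollary~\ref{cor-sparse} as a black box about Maker's graph; you have to re-prove its underlying union bounds in the game setting, which is exactly what the paper's bullet list is doing.

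A second, smaller but real, issue: you have Maker play the random strategy for the entire game, over $\lceil\binom{n}{2}/(b+1)\rceil$ rounds. In the late rounds the free-edge pool drops to $o(n^2)$, the per-edge bound $\sum_t 1/f_t$ picks up a logarithmic factor, and the $(C/n)^{|F|}$ estimate is lost. The paper instead has Maker play randomly only for the first $(1+\epsilon/2)n$ rounds, during which $f_t=\Theta(\epsilon n^2)$ holds deterministically against any Breaker strategy --- this is exactly where $b\le(1-\epsilon)n/2$ is used --- and analyzes the resulting graph $M_0$. Since $M_0$ is a subgraph of Maker's final graph, a linear cycle in $M_0$ suffices. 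With these two repairs, your outline coincides with the paper's argument; your deterministic-greedy alternative is not needed and is harder than the random route.
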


\begin{proof}[Proof (sketch)] Maker plays randomly (i.e., by taking a random free edge) during the first $(1+\epsilon/2)n$ rounds of the game. Observe that at any moment during these rounds, the board still has $\Theta(n^2)$ free edges, and thus the probability of any edge to be chosen by Maker at a given round is $O(n^{-2})$. This allows us to apply standard union-bound type calculations to claim that with positive probability Maker, playing against any strategy of Breaker, can create a graph $M_0$ on $n$ vertices with the following properties:
\begin{itemize}
\item $M_0$ has  $\left(1+\frac{\epsilon}{2}\right)n$ edges;
\item every $k\le\delta n$ vertices span at most $\left(1+\frac{\epsilon}{8}\right)k$ edges;
\item every $\delta n$ vertices touch at most $\frac{\epsilon n}{4}$ edges\,,
\end{itemize}
for some $\delta=\delta(\epsilon)>0$.
Since the game analyzed is a perfect information game with no chance moves, it follows that in fact Maker has a (deterministic) strategy to create a graph $M_0$ with the above stated properties. Take such $M_0$ and delete $\delta n$ vertices of highest degrees. The obtained graph $M_1$ has $(1-\delta)n$ vertices, at least $\left(1+\frac{\epsilon}{4}\right)n$ edges, maximum degree $\Delta(M_1)\le \frac{\epsilon}{2\delta}$, and every $k\le \delta n$ vertices span at most $\left(1+\frac{\epsilon}{8}\right)k$ edges. Applying Theorem~\ref{t-sparse} shows that such $M_1$, being a part of Maker's graph by the end of the game, contains a linearly sized $\alpha$-expander, for some $\alpha=\alpha(\epsilon)>0$. As we will argue in the next section (see Theorem~\ref{t-long_cycle}), Maker's graph contains then a linearly long cycle.
\end{proof}

\section{Long paths and cycles}\label{s-long_pc}

Now we start investigating extremal properties of expanders directly. The first objects to explore are paths and cycles. As we explained in Section~\ref{s-examples}, it is unrealistic to expect appearance of many substructures in every expanding graphs. Luckily paths and cycles (the latter ones possibly with some restrictions on their lengths) are not in this excluded category, and indeed, as we will argue in this section, every $\alpha$-expander on $n$ vertices contains cycles whose length is linear in $n$.

We first describe the most basic -- and very handy -- tool for arguing about long paths and cycles in expanding graphs, which is the Depth First Search algorithm (DFS).

\subsection{DFS algorithm}\label{ss-DFS}
The {\em Depth First Search} is a well known graph exploration algorithm, usually applied to discover connected components of an input graph. As it turns out, due to its nature (of always pushing deeper, as its name suggests), this algorithm is particularly suitable for finding long paths and cycles in graphs.
Some illustrative examples of its applications in the context of extremal graphs and random graphs include~\cite{AKS81, AKS00, BBDK06, BKS12, KS13, Rio14, KLS15}.

Recall that the DFS (Depth First Search) is a graph search algorithm
that visits all vertices of a  graph and eventually discovers the structure of the connected components of $G$.
The algorithm receives as an input a graph $G=(V,E)$; it is also assumed that an order $\sigma$ on the
vertices of $G$ is given, and the algorithm prioritizes vertices
according to $\sigma$. The algorithm's output is a spanning forest $F$ of $G$, whose connected components are identical to those of $G$. Each connected component $C$ of $F$ is a tree rooted at vertex $r$, the first vertex of $C$ according to $\sigma$.

The algorithm maintains three sets of vertices, letting
$S$ be the set of vertices whose exploration is complete, $T$ be the
set of unvisited vertices, and $U=V\setminus(S\cup T)$, where the
vertices of $U$ are kept in a stack (the last in, first out data
structure). It initializes with $S=U=\emptyset$ and
$T=V$, and runs till $U\cup T=\emptyset$. At each round of the
algorithm, if the set $U$ is non-empty, the algorithm queries $T$
for neighbors of the last vertex $v$ that has been added to $U$,
scanning $T$ according to $\sigma$. If $v$ has a neighbor $u$  in
$T$, the algorithm deletes $u$ from $T$ and inserts it into $U$. If
$v$ does not have a neighbor in $T$, then $v$ is popped out of $U$
and is moved to $S$. If $U$ is empty, the algorithm chooses the
first vertex of $T$ according to $\sigma$, deletes it from $T$ and
pushes it into $U$.

The following are basic properties of the DFS algorithm:
\begin{itemize}
\item[{\bf (P1)}] at each round of the algorithm one vertex moves, either from
$T$ to $U$, or from $U$ to $S$;
\item[{\bf (P2)}] at any stage of the algorithm, it has been revealed already
that the graph $G$ has no edges between the current set $S$ and the
current set $T$;
\item[{\bf (P3)}] the set $U$ always spans a path;
\item[{\bf (P4)}] let $F$ be a forest produced by the DFS algorithm applied to $G$. If $(u,v)\in E(G)\setminus E(F)$, then one of $\{u,v\}$ is a predecessor of the other in $F$ (along the unique path to the root).
\end{itemize}

Properties  {\bf (P1)}, {\bf (P2)} are immediate upon a brief reflection on the description of the DFS algorithm. For Property {\bf (P3)}, observe that when a vertex $u$ is added to $U$, it happens because $u$ is a neighbor of the last
vertex $v$ in $U$; thus $u$ augments the path spanned by $U$, of which $v$ is the last vertex; moving the last vertex $v$ of $U$ over to $S$ deletes the last vertex of the path spanned by $U$, and hence still leaves the updated $U$ spanning a path. Property {\bf (P4)}, applied for example in~\cite{AKS00}, is perhaps less obvious, and therefore we supply its short proof here. Since $u,v$ are connected by an edge, they obviously belong to the same connected component $C$ of $G$ (which is also a connected component of $F$, vertex-wise). Assume that $v$ was discovered by the algorithm (and moved to $U$) earlier than $u$. If $u$ is not under $v$ in the corresponding rooted tree component $T$ of $F$, then the algorithm has completed exploring $v$ and moved it over to $S$ before getting to explore $u$ (and to move it to $U$) --- obviously a contradiction.

\subsection{Long paths}\label{ss-paths}
We now exploit the features of the DFS algorithm to derive the existence of long paths in expanding graphs. The following simple to prove statement is stated explicitly in~\cite{Kri13} (see Proposition 2.1 there).

\begin{propos}\label{p-DFS}
Let $k,\ell$ be positive integers. Assume that $G=(V,E)$ is a graph on more than $k$ vertices, in which every vertex subset $S$ of size $|S|=k$ satisfies: $|N_G(S)|\ge \ell$. Then $G$ contains a path of length $\ell$.
\end{propos}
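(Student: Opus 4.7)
The plan is to run the DFS algorithm described in Subsection~\ref{ss-DFS} on $G$ with an arbitrary vertex ordering $\sigma$, and to locate one critical moment during its execution at which the set $U$ --- which by property (P3) always spans a path in $G$ --- is forced to be large. The hypothesis on external neighborhoods will be applied exactly once, at that critical moment, leveraging the ``no edges between $S$ and $T$'' structural guarantee (P2).

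First, since $|V(G)|>k$ and DFS terminates only when $S=V$, the cardinality $|S|$ grows from $0$ to $|V|$ over the course of the algorithm, increasing by $1$ exactly when a vertex is popped from $U$ into $S$ (property (P1)). Consequently there is a well-defined first time $t^\ast$ at which $|S|=k$, and the move executed at time $t^\ast$ is the pop of some vertex $v$ from $U$ into $S$. Let $S^\ast$, $U^\ast$, $T^\ast$ denote the three sets immediately after this pop. By (P2), no edges of $G$ go between $S^\ast$ and $T^\ast$, so $N_G(S^\ast)\subseteq V\setminus(S^\ast\cup T^\ast)=U^\ast$. By the hypothesis applied to $S^\ast$ (which has size exactly $k$), $|N_G(S^\ast)|\ge\ell$, and therefore $|U^\ast|\ge\ell$.

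Now I would extract the path. Consider the state of the algorithm immediately \emph{before} the critical pop: the stack then contained $U^\ast\cup\{v\}$ with $v$ on top, so it had at least $\ell+1$ vertices, and by (P3) it spanned a path of $G$. That path has length at least $\ell$, as required. Equivalently, one can argue at time $t^\ast$ itself by recalling that $v$ was pushed onto $U$ only because it was a neighbor of what is now the top of $U^\ast$; thus $\{v\}\cup U^\ast$ still spans a path of length $\ge\ell$.

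The only potentially subtle point is an off-by-one: if one inspects the state \emph{after} the pop, one obtains a path on $\ell$ vertices (length $\ell-1$), one short of the claim. The fix, as above, is to look one step earlier --- or, equivalently, to remember that the just-popped vertex extends the remaining stack-path by one edge --- which is the step I expect to be the main thing to get right.
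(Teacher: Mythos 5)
Your proof is correct and follows exactly the paper's argument: run DFS, locate the moment $|S|=k$, use (P2) to deduce $N_G(S)\subseteq U$ so $|U|\ge\ell$, and step back one move (the pop) so that the stack has $\ge\ell+1$ vertices spanning a path by (P3). The off-by-one point you flag is handled in the paper with the same fix.
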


\begin{proof}
Run the DFS algorithm on $G$, with $\sigma$ being an arbitrary ordering of $V$. Look at the moment during the algorithm execution when the size of the set $S$ of already processed vertices becomes exactly equal to $k$ (there is such a moment due to Property  {\bf (P1)} above, as the vertices of $G$ move into $S$ one by one, till eventually all of them land there). By Property {\bf (P2)}, the current set $S$ has no neighbors in the current set $T$, and thus $N_G(S)\subseteq U$, implying $|U|\ge \ell$.  The last move of the algorithm was to shift a vertex from $U$ to $S$, so before this move $U$ was one vertex larger. The set $U$ always spans a path in $G$, by Property {\bf (P3)}. Hence $G$ contains a path of length $\ell$.
\end{proof}

It thus follows that if $G$ is $\alpha$-expander on $n$ vertices, then $G$ contains a path of length at least $\alpha n/2$ (apply Proposition~\ref{p-DFS} with $k=n/2$, $\ell=\alpha n/2$). As we have seen, with the right technology (DFS in this case) the proof of this result becomes essentially a one-liner. The estimate delivered by this argument is fairly tight, also in terms of the dependence on $\alpha$ -- if $G$ is taken to be the complete bipartite graph with sides of sizes $\alpha n$ and $(1-\alpha)n$, then a longest path in $G$ has $2\alpha n$ vertices.

We now describe an application of Proposition~\ref{p-DFS} to size Ramsey numbers. (See, e.g.,~\cite{Kri16} or~\cite{DP18} for the relevant background.) For graphs $G,H$ and a positive integer $r$, we write $G\rightarrow (H)_r$ if every $r$-coloring of the edges of $G$ produces a monochromatic copy of $H$. The {\em $r$-color size Ramsey number} $\hat{R}(H,r)$ is defined as the minimal possible number of edges in a graph $G$, for which $G\rightarrow (H)_r$. Let us discuss briefly  the multi-color size Ramsey number $\hat{R}(P_n,r)$ of the path $P_n$ on $n$ vertices.  Beck in his groundbreaking paper~\cite{Bec83} proved that $\hat{R}(P_n,2)=O(n)$; his argument can easily be extended to show that $\hat{R}(P_n,r)=O_r(n)$ for any fixed $r\ge 2$. If so, the relevant question now is what is the hidden dependence on $r$ in the above bound. Here we argue that $\hat{R}(P_n,r)=O(r^2\log r)n$.

The following proposition is tailored to handle size Ramsey numbers of long paths.

\begin{propos}\label{p-Ramsey}
Let $d,r>0$.  Let  $G=(V,E)$ be a graph on $|V|=N$ vertices with average degree at least $d$. Assume that for an integer $0<n<N$, every vertex subset $W\subset V$ of cardinality at most $2n$ spans at most $(d/8r)|W|$ edges. Then every edge subset $E_0\subset E$ with $|E_0|\ge |E|/r$ contains a path of length $n$.
\end{propos}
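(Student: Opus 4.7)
I would apply Proposition~\ref{p-DFS} to a suitably dense subgraph of $G_0 := (V, E_0)$. Applying it directly to $G_0$ is hindered by the possible presence of very low-degree vertices, so the first move is to pass to a min-degree kernel. Since $|E_0|\ge|E|/r\ge dN/(2r)$, the average degree of $G_0$ is at least $d/r$, and the standard iterated-deletion argument (repeatedly remove any vertex whose current degree is below half the current average) produces a non-empty subgraph $G_0'\subseteq G_0$ with $\delta(G_0')\ge d/(2r)$. Since $G_0'\subseteq G$, the local sparseness hypothesis is inherited: every $W\subseteq V(G_0')$ with $|W|\le 2n$ satisfies $e_{G_0'}(W)\le (d/8r)|W|$.

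The heart of the plan is a sparseness-plus-min-degree computation carried out on two scales. On the global scale I would verify $|V(G_0')|>2n$: otherwise sparseness applied to $W=V(G_0')$ gives $|E(G_0')|\le (d/8r)|V(G_0')|$, while the minimum degree yields $|E(G_0')|\ge (d/(4r))|V(G_0')|$, and $d/(4r)>d/(8r)$ is the contradiction. On the local scale I want to show that every $S\subseteq V(G_0')$ of size $n+1$ satisfies $|N_{G_0'}(S)|\ge n$. Assuming the opposite, $|S\cup N_{G_0'}(S)|\le 2n$, so sparseness bounds the edges spanned by this set by $dn/(4r)$. Since every $G_0'$-edge leaving $S$ lands in $N_{G_0'}(S)$, summing degrees yields
\[
\sum_{v\in S}\deg_{G_0'}(v)\le 2\,e_{G_0'}(S\cup N_{G_0'}(S))\le \frac{dn}{2r},
\]
which conflicts with the lower bound $(n+1)\cdot d/(2r)>dn/(2r)$ coming from the minimum degree. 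This is the desired contradiction.

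With both statements in hand, Proposition~\ref{p-DFS} applied to $G_0'$ with parameters $k=n+1$ and $\ell=n$ delivers a path of length $n$ inside $G_0'\subseteq G_0$. The step I expect to be most delicate is the first one: without the lower bound $|V(G_0')|>2n$, the $(n+1)$-subset condition cannot even be formulated, and one might worry that the min-degree reduction strips away too much of $G_0$. Luckily, exactly the same interplay between sparseness and minimum degree that drives the expansion in the local step also rules out the extreme shrinking in the global step, so the two potential obstructions dissolve each other.
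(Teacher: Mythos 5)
Your proof is correct and follows essentially the same route as the paper's: pass to a minimum-degree-$\ge d/(2r)$ subgraph of $G_0$, use the local sparseness hypothesis once to show this subgraph has more than $2n$ vertices and once more to show that every $n$- or $(n+1)$-vertex subset has at least $n$ external neighbors, then invoke Proposition~\ref{p-DFS}. The only cosmetic differences are that the paper uses $k=\ell=n$ and a direct count of edges touching $U$, whereas you use $k=n+1$, $\ell=n$ and argue by contradiction; the substance is identical.
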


\begin{proof}
Denote $G_0=(V,E_0)$. Clearly the average degree of $G_0$ is at least $d/r$. Find a subgraph $G_1=(V_1,E_1)\subseteq G_0$ of minimum degree at least $d/2r$. Since $|E_1|\ge (d/4r)|V_1|$, we have $|V_1|>2n$ by the proposition's assumption. Let now $U\subset V_1$ be an arbitrary set of $n$ vertices in $G_1$. Then the set $U\cup N_{G_1}(U)$ spans all edges of $G_1$ touching $U$, and there are at least $|U|(d/2r)/2=nd/4r$ of them. Hence $|U\cup N_{G_1}(U)|\ge 2n$. We derive: $|N_{G_1}(U)|\ge n$. Applying Proposition~\ref{p-DFS} to $G_1$ with $k=\ell=n$, the proposition follows.
\end{proof}

Note that the assumption in the proposition is fairly similar to the definition of locally sparse graphs as in  Section~\ref{s-subgraphs}.

We can now prove
\begin{thm}\label{t-Ramsey}
There exists an absolute constant $C_0>0$ such that for any integer $r\ge 2$ and sufficiently large $n$, $\hat{R}(P_n,r)\le C_0r^2\log r\cdot n$.
\end{thm}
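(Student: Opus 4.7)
\noindent\emph{Proof proposal.} The plan is to exhibit a graph $G$ on $N=\Theta(rn)$ vertices with $|E(G)|=O(r^2\log r\cdot n)$ that meets the hypotheses of Proposition~\ref{p-Ramsey}, and then conclude by pigeonhole: any $r$-coloring of $E(G)$ has a color class $E_0$ with $|E_0|\ge |E(G)|/r$, so Proposition~\ref{p-Ramsey} produces a long monochromatic path inside $E_0$.

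The natural candidate for $G$ is a binomial random graph. I would take $G\sim G(N,p)$ with $N=\lceil Arn\rceil$ and $p=B\log r/n$, for large absolute constants $A,B$ to be tuned at the end. A first-moment computation together with Chernoff concentration gives, \whp, $|E(G)|=(1+o(1))\frac{A^2B}{2}\,r^2 n\log r$ and average degree $d=(1+o(1))ABr\log r$, which is the correct order of magnitude on both counts.

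The heart of the argument is verification of the local sparseness condition of Proposition~\ref{p-Ramsey}: \whp every $W\subset V(G)$ with $|W|\le 2n$ spans at most $(d/8r)|W|=(AB\log r/8)|W|$ edges. This is a direct analogue of Proposition~\ref{p-sparse1}: with $c_2=AB\log r/8$, a union bound on the event ``some $W$ of size $w$ spans at least $c_2 w$ edges'' yields a sum of the form $\sum_{w\le 2n}\left[\frac{eN}{w}\left(\frac{ewp}{2c_2}\right)^{c_2}\right]^w$. Substituting the chosen $N$ and $p$ turns the bracketed expression into $\frac{eArn}{w}\left(\frac{4ew}{An}\right)^{c_2}$, and the main (and essentially only) obstacle is showing that for $A,B$ chosen large enough each summand is $o(n^{-1})$ uniformly in $w\in\{1,\dots,2n\}$. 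For $w$ of order $n$ the factor $(4ew/(An))^{c_2}$ is at most $(1/2)^{c_2}=r^{-\Omega(AB)}$ once $A\ge 16e$, easily killing the polynomial prefactor; the very smallest sizes are handled by the constraint $c_2 w\le\binom{w}{2}$, and intermediate $w$ succumb to the same case split as in Proposition~\ref{p-sparse1} (splitting around $w\sim n^{1/2}$).

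With local sparseness in hand, Proposition~\ref{p-Ramsey} applies to $G$ with average degree parameter $d$ and path-length parameter $n-1$. Pigeonholing over the $r$ color classes of an arbitrary $r$-edge-coloring of $E(G)$ produces a monochromatic subgraph with at least $|E(G)|/r$ edges, and hence a monochromatic path on $n$ vertices. We conclude that $\hat R(P_n,r)\le |E(G)|\le C_0\,r^2\log r\cdot n$ for the absolute constant $C_0=A^2B$.
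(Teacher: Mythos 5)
Your proposal is correct and follows essentially the same route as the paper: take $G\sim G(N,p)$ with $N=\Theta(rn)$ and $p=\Theta(\log r/n)$ (the paper uses $N=25rn$, $p=Cr\log r/N$, which is the same family up to reparametrization), verify the local sparseness hypothesis of Proposition~\ref{p-Ramsey} by a standard union bound over sets of size at most $2n$, and then invoke that proposition on the majority color class. The only minor point to tidy in a full write-up is that you should take $c_2$ slightly below $AB\log r/8$ (say $AB\log r/9$, as the paper does) to absorb the $(1+o(1))$ slack in the actual average degree of the sampled graph.
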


This result can be read out from~\cite{Kri16} (Theorem 4 there). An alternative proof has recently been offered by Dudek and Pra\l at~\cite{DP18}, it also relies on a DFS-based argument. The estimate of Theorem~\ref{t-Ramsey} is close to being tight -- Dudek and Pra\l at proved in a prior paper~\cite{DP17} that $\hat{R}(P_n,r)=\Omega(r^2)n$; a somewhat better absolute constant for the lower bound has been obtained by the author~\cite{Kri16}.

\begin{proof}[Proof (sketch)] Let $N=25rn$, and consider the binomial random graph $G(N,p)$ with $p=\frac{Cr\log r}{N}$, where $C>0$ is sufficiently large constant. Then with high probability $G\sim G(N,p)$ has average degree $d=(1+o(1))Np=(1+o(1))Cr\log r$.  In order to bound the typical local density of $G$, observe that for a subset $W\subset [N]$ of cardinality $|W|=i\le 2n$, the number $e_G(W)$ of edges spanned by $W$ in $G$ is a binomial random variable with parameters $\binom{i}{2}$ and $p$. Recalling the easy estimate $\Pr[\Bin(n,p)\ge k]\le \binom{n}{k}p^k\le (enp/k)^k$, we can write:
$$
\Pr\left[e_G(W)\ge \frac{Ci\log r}{9}\right]\le \left[\frac{\frac{ei^2}{2}\cdot\frac{Cr\log r}{N}}{\frac{Ci\log r}{9}}\right]^{\frac{Ci\log r}{9}}= \left(\frac{9er}{2}\cdot\frac{i}{N}\right)^{\frac{Ci\log r}{9}}=\left(\frac{9e}{50}\cdot\frac{i}{n}\right)^{\frac{Ci\log r}{9}}\,.
$$
Summing over all $i\le 2n$ and over all choices of $W\subset [N]$ with $|W|=i$, performing standard asymptotic manipulations, and recalling that $C>0$ can be chosen to be sufficiently large, we derive that with high probability every such $W$ spans at most $(d/8r)|W|$ edges.

Let now $f\colon E(G)\rightarrow [r]$ be an $r$-coloring of $E(G)$. Take $E_0$ to be the majority color, clearly $|E_0|\ge |E|/r$. Applying Proposition~\ref{p-Ramsey} to $G$ and $E_0$, we conclude that $E_0$ contains a path of length~$n$. It follows that every $r$-coloring of $E(G)$ contains a monochromatic path of length~$n$. Hence $\hat{R}(P_n,r)\le |E(G)|=O(r^2\log r)n$.
\end{proof}

\subsection{Long cycles}\label{ss-cycles}
With more effort (still based on the DFS algorithm and its properties) one can prove that every $\alpha$-expander on $n$ vertices contains a cycle of length linear in $n$. Notice that here, unlike in the case of paths, we cannot expect to get a cycle of any prescribed length, but rather a cycle at least as long as the lower bound given by the next theorem.
\begin{thm}[\cite{Kri16}]\label{t-long_cycle}
Let $k>0,\ell\ge 2$ be integers. Let $G=(V,E)$ be a graph on more than $k$ vertices, satisfying:
$$
|N_G(W)|\ge \ell,\quad \mbox{for every } W\subset V,\  \frac{k}{2}\le |W|\le k\,.
$$
Then $G$ contains a cycle of length at least $\ell+1$.
\end{thm}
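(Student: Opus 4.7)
My plan is to prove this by contradiction: assume $G$ has no cycle of length $\ge \ell+1$ and exhibit a set $W$ with $|W|\in[k/2,k]$ yet $|N_G(W)|<\ell$, violating the hypothesis. The main tool is the DFS algorithm from Subsection~\ref{ss-DFS}: run DFS on $G$ with any vertex ordering to produce a spanning forest $F$, and use Property~(P4) to conclude that every non-tree edge is a back edge between an ancestor $u$ and a descendant $w$ in $F$. Under the no-long-cycle assumption such a back edge must satisfy $d(w)-d(u)\le \ell-1$ in tree-depth, for otherwise the tree path from $u$ to $w$ combined with the back edge would form a cycle of length at least $\ell+1$. This bound is the crux: for every vertex $v\in V$, every edge of $G$ leaving the subtree $T_v$ of $F$ must reach a strict ancestor of $v$ at depth in $[d(v)-\ell+1,\,d(v)-1]$, and hence $|N_G(T_v)\setminus T_v| \le \min(\ell-1,d(v)) \le \ell-1$.

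If some $v$ satisfies $|T_v|\in[k/2,k]$, the choice $W=T_v$ delivers the contradiction at once. The harder case is when subtree sizes skip the interval $[k/2,k]$ entirely. Consider the upper-closed set $H=\{v\in V:|T_v|>k\}$. If $H$ is empty, then every root of $F$ has $|T_{\mathrm{root}}|\le k$; combined with the case assumption this forces every component of $G$ to have fewer than $k/2$ vertices, and a greedy union of several components yields a set $W$ with $|W|\in[k/2,k)$ and $|N_G(W)|=0$, already a contradiction. Otherwise pick any leaf $v^*$ of $H$: $|T_{v^*}|>k$, while every child $c$ of $v^*$ has $|T_c|\le k$ and hence, by the case assumption, $|T_c|<k/2$.

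The main obstacle is choosing the correct $W$ at $v^*$. The naive attempt $W=\{v^*\}\cup\bigcup_{c\in C'}T_c$ fails, because every \emph{unused} child of $v^*$ becomes an external neighbor via its tree edge to $v^*$, and $v^*$ may have very many children. The key trick is to leave $v^*$ \emph{out} of $W$ and set $W=\bigcup_{c\in C'}T_c$, choosing $C'$ greedily by adding children's subtrees one at a time until $|W|\ge k/2$. Since each $|T_c|<k/2$, this yields $|W|\in[k/2,k)$, and $\sum_c|T_c|=|T_{v^*}|-1>k-1$ guarantees we can actually reach $k/2$. Every edge from $W$ to its complement then either hits $v^*$ itself (a tree edge from some $c\in C'$, or a back edge from $T_c$) or a strict ancestor of $v^*$; in the latter case, the back-edge span bound applied to a vertex of depth at least $d(v^*)+1$ forces the target to lie at depth in $[d(v^*)-\ell+2,\,d(v^*)-1]$, giving at most $\ell-2$ such ancestors. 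Altogether $|N_G(W)| \le 1+(\ell-2) = \ell-1$, contradicting the hypothesis and completing the proof.
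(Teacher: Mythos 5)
Your proof is correct and rests on the same DFS machinery as the paper's: Property (P4), the branching vertex $v^*$ with $|T_{v^*}|>k$ whose children's subtrees are all small, and a set $W$ assembled greedily from those subtrees so that $|W|\in[k/2,k)$. The only difference is presentational — the paper argues constructively, picking the farthest vertex of $N_G(W)$ along the root-to-$v^*$ path and reading off a cycle of length $\ge\ell+1$, whereas you argue by contradiction and cap $|N_G(W)|\le 1+(\ell-2)=\ell-1$ directly via the back-edge depth-span bound — two phrasings of the same count.
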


\begin{proof}[Proof (sketch)]
Observe first that $G$ has a connected component $C$ on more than $k$ vertices. Let $T$ be the tree obtained by applying the DFS algorithm to $C$, under an arbitrary order $\sigma$ of its vertices, and let $r$ be its root (which is the first vertex of $C$ under $\sigma$). Then $|V(T)|=|V(C)|>k$. Now we find a vertex $v$ and a subset $X$ of its children in $T$ such that the subtrees of $T$ rooted at the vertices of $X$ have between $k/2$ and $k$ vertices in total. (This can be done by going from $r$ down the tree till we find a vertex $v$ whose subtree $T_v$ has more than $k$ vertices, but the subtrees $T_x$ of its children in $T$ are all less than $k$ in their sizes; the required set $X$ can then be recruited from the children of $v$ in $T$.)  Let $W=\bigcup_{x\in X}V(T_x)$. Denote by $P$ the path in $T$ from the root $r$ to $v$. Then $k/2\le |W|\le k$, and by Property {\bf (P4)} of the DFS algorithm we have: $N_G(W)\subseteq V(P)$. Let $v^*\in V(P)$ be the farthest from $v$ vertex in $N_G(W)\cap V(P)=N_G(W)$. Clearly its distance from $v$ is at least $|N_G(W)|-1\ge \ell-1$, by the theorem's assumption. Let $w$ be a neighbor of $v^*$ in $W$. Then the cycle, formed by the path in $T$ from $w$ to $v^*$ (which includes the part of $P$ from $v$ to $v^*$) and the edge $(w,v^*)$, has length at least $\ell+1$.
\end{proof}

Assuming that $G$ is an $\alpha$-expander on $n=\Omega(1/\alpha)$ vertices and applying Theorem~\ref{t-long_cycle} to $G$, we derive the existence of a cycle of length more than $\alpha n/4$ in $G$. The order of magnitude in $n$ obtained is obviously optimal; moreover, the order of dependence on $\alpha$ in this bound is optimal as well, due to the same example of $K_{\alpha n,(1-\alpha)n}$.

\subsection{Cycle lengths}\label{ss-cycle_lengths}
Theorem~\ref{t-long_cycle} argues that if $G$ is an $\alpha$-expander on $n$ vertices, then $G$ has a cycle of length $\Omega(\alpha)n$. Also, due to Proposition~\ref{p1}, $G$ has cycles as short as $O_{\alpha}(\log n)$. What can be said then about cycle lengths in $G$ between the two extremes? Does $G$ have linearly many in $n$ cycle lengths? How well can one approximate an integer $\ell$ within the set of cycle lengths in $G$? The answer is given by the following theorem:
\begin{thm}[\cite{FKN18}]\label{t-cycle_lengths}
For every $\alpha>0$ there exist $A,C,n_0$ such that for every $\alpha$-expander $G=(V,E)$ on $n\ge n_0$ vertices and every $\ell\in [C\ln n,n/C]$, the graph $G$ has a cycle whose length is between $\ell$ and $\ell+A$.
\end{thm}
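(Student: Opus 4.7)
To prove Theorem~\ref{t-cycle_lengths}, the plan is to construct a family of candidate cycles parametrized by a single integer $s$, where $s$ controls the cycle length in a ``nearly continuous'' fashion, so that some cycle in the family necessarily lands in the target window $[\ell, \ell+A]$.

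First I would apply Proposition~\ref{p-DFS} to the $\alpha$-expander $G$ (taking $k=n/2$) to obtain a path $P = u_1 u_2 \ldots u_m$ of length $m \ge \alpha n/2$. For the given range $\ell \le n/C$, picking $C$ large in terms of $\alpha$ guarantees $m \ge 10\ell$, so there is plenty of room to work with prefixes of $P$ of length near $\ell$.

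Next, for each $s \in [\ell, 2\ell]$ I would consider the following candidate cycle $C_s$: take the prefix $P_s := u_1 u_2 \ldots u_s$ of $P$, together with a shortest $u_s$-to-$u_1$ path $Q_s$ in the subgraph $G_s := G - \{u_2, \ldots, u_{s-1}\}$. Since $|V(G) \setminus V(G_s)| \le 2\ell \le 2n/C$, the graph $G_s$ still has strong expansion: by an adaptation of Lemmas~\ref{lem1} and~\ref{lem3} (essentially, sublinear vertex deletion preserves linearly sized induced expanders, after first choosing $u_1$ to be a vertex of suitably high degree so that it is not orphaned), $G_s$ contains a large induced $(\alpha/2)$-expander; combined with Corollary~\ref{coroll2} this gives $|Q_s| = O_\alpha(\log n) \le \ell/10$. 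The simple cycle $C_s := P_s \cup Q_s$ then has length $L(s) := (s-1) + |Q_s|$.

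The crux is then to show that the function $L(s)$ is \emph{quasi-continuous} in $s$, i.e., $|L(s+1) - L(s)| \le A_0$ for a constant $A_0 = A_0(\alpha)$. Granted this, since $L(\ell) \ge \ell - 1$ and $L(2\ell) \ge 2\ell - 1 > \ell + A_0$, as $s$ increases from $\ell$ to $2\ell$ the function $L$ covers the interval $[\ell, 2\ell]$ with jumps of size at most $A_0$, hence attains a value in $[\ell, \ell+A_0]$, proving the theorem with $A = A_0$.

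The main obstacle is justifying this quasi-continuity. As $s$ increments by one, the source vertex shifts by a single edge $u_s u_{s+1}$ of $G$ and the forbidden set enlarges by the vertex $u_s$; the first change perturbs $|Q_s|$ by $\pm 1$, but the second could \emph{a priori} inflate it significantly. The central technical input needed is a distance-stability claim for expanders: in an $\alpha$-expander, removing a single vertex $v$ alters the distance $d(x,y)$ between any other pair by at most $c(\alpha)$. Intuitively, Proposition~\ref{p1} guarantees that BFS balls around $x$ grow exponentially even after deleting a single vertex (their sizes drop by at most one), so a $u_s$-to-$u_1$ path only $c(\alpha)$ longer than $d_{G_s}(u_s, u_1)$ can be rerouted around $u_s$ in $G_{s+1}$. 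Making this precise and then carefully chaining its effect across the incremental construction of $G_s$ as $s$ varies --- possibly maintaining a ``reserve'' of alternative closing paths to switch between, so as to insulate against the sporadic loss of the current $Q_s$ --- constitutes the central challenge, and pins down the final constant $A = A(\alpha)$.
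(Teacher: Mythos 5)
Your high-level plan is reasonable and genuinely different from what the paper presents. Note first that the paper does not actually prove Theorem~\ref{t-cycle_lengths} --- it defers the full proof to~\cite{FKN18} and only sketches a weaker statement (linearly many cycle lengths), via a spanning tree with bounded-degree ``thick'' layers $L_{k_0},\dots,L_{k_1}$, a long DFS path inside those layers, and cycles assembled from segments of that path joined by tree paths through $L_{k_0}$. Your ``discrete intermediate value'' scheme over a one-parameter family of cycles $C_s = P_s\cup Q_s$ is a distinct approach, and it would be a clean route if the two claims you lean on were true. They are not.

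The central gap is the distance-stability claim: it is \emph{false} that in an $\alpha$-expander deleting one vertex $v$ changes any distance $d(x,y)$ by at most $c(\alpha)$. Take $G$ a random $3$-regular graph (a weak expander by Section~\ref{s-examples}), $v$ any vertex, and $x,y$ two of its neighbours; then $d_G(x,y)\le 2$, but since the neighbourhood of $v$ is typically tree-like up to depth $\Theta(\log n)$, we have $d_{G-v}(x,y)=\Omega(\log n)$. Your supporting sentence ``BFS balls \dots{} drop by at most one'' is also wrong for the same reason: deleting a hub vertex on the only short routes can collapse $B_t(x)$ drastically. Moreover the graphs $G_s$ to which you would apply this are not $\alpha$-expanders --- they are $G$ minus up to $2\ell=\Theta(n/C)$ vertices --- so even a correct stability lemma for expanders would not apply directly. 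There is also a prior, unaddressed issue: you give no reason why $u_1$ and $u_s$ are even in the same component of $G_s$, let alone within $O_\alpha(\log n)$; all of $N_G(u_1)$ could lie inside $\{u_2,\dots,u_{s-1}\}$ (choosing $u_1$ of ``high degree'' does not help in a bounded-degree graph), and Lemmas~\ref{lem1}/\ref{lem3} only yield \emph{some} large expanding induced subgraph, with no control over whether it contains $u_1$ or $u_s$. The remark about keeping a ``reserve of alternative closing paths'' gestures at what is really needed --- a robust family of many vertex-disjoint closing paths of controlled, nearly consecutive lengths --- but as written it is a placeholder rather than an argument, and this is exactly where the hard work of~\cite{FKN18} lies.
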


Hence the set $L(G)$ of cycle lengths in an $\alpha$-expander $G$ on $n$ vertices is well spread and approximates every number $\ell$ in the range $[C\ln n,n/C]$ within an additive constant. In particular, $G$ has linearly many cycle lengths.

The proof of Theorem~\ref{t-cycle_lengths} is somewhat involved. We outline instead the proof of a weaker statement, assuring that the set $L(G)$ of cycle lengths in an $\alpha$-expander $G$ on $n$ vertices has cardinality linear in $n$. The proof borrows some ideas from prior papers on cycle lengths, such as~\cite{SV08}.

We start with the following lemma.
\begin{lemma}\label{l-cycle_lengths}
Let $\epsilon>0$. There are constants $C_1=C_1(\alpha,\epsilon)$ and $C_2=C_2(\alpha,\epsilon)>0$ such that for every $v\in V$ the graph $G$ contains a spanning tree $T$ rooted at $v$ with levels $L_0=\{v\},L_1,\ldots,L_{k_0},\ldots, L_{k_1}$ such that $k_0=O(\log n), k_1\le k_0+C_1$,
$$
\left| \bigcup_{i=k_0}^{k_1} L_i \right| \ge (1-\epsilon)n\,,
$$
and the degrees of all $u\in \bigcup_{i=k_0}^{k_1} L_i$ in $T$ are at most $C_2$.
\end{lemma}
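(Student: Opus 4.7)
The plan is to construct $T$ as a balanced BFS tree rooted at $v$ and then to refine it so that a constant-width band of tree-levels captures almost all vertices at bounded tree-degree.

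First, I would fix the level window using the expansion rate of BFS balls in $G$. Run BFS from $v$ and set $B_t=B_t(v)$. By Proposition~\ref{p1}, $|B_t|$ grows by a factor of at least $1+\alpha$ per step as long as $|B_t|\le n/2$. Once $|B_t|>n/2$, applying the $\alpha$-expansion to $W=V\setminus (B_t\cup N_G(B_t))$ (which has size less than $n/2$) yields the complementary estimate $n-|B_{t+1}|\le (1+\alpha)^{-1}(n-|B_t|)$, so the complement shrinks geometrically as well. Define $k_0$ as the smallest index with $|B_{k_0}|\ge \epsilon n/4$ and $k_1$ as the smallest with $|B_{k_1}|\ge (1-\epsilon/4)n$; the two geometric rates give $k_0=O_\alpha(\log n)$ and $k_1-k_0\le C_1=O_\alpha(\log(1/\epsilon))$. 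In particular, the BFS-band $B_{k_1}\setminus B_{k_0-1}$ contains at least $(1-\epsilon/2)n$ vertices.

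Next, I would produce a BFS spanning tree $T_0$ by assigning, for each $u\in B_{i+1}\setminus B_i$, a parent in $B_i\setminus B_{i-1}$ via a load-balancing greedy rule. Summing the parent-edge plus children contributions and telescoping, the total $T_0$-degree of vertices in $B_{k_1}\setminus B_{k_0-1}$ is at most $|B_{k_1}\setminus B_{k_0-1}|+|B_{k_1+1}\setminus B_{k_0}|\le 2n$. Choosing $C_2:=16/\epsilon$, Markov's inequality bounds the exceptional set
\[
H=\{u\in B_{k_1}\setminus B_{k_0-1}:\deg_{T_0}(u)>C_2\}
\]
by $|H|\le\epsilon n/8$.

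The main obstacle is to strengthen this averaging bound into a uniform one: \emph{every} vertex in the final tree-band must have tree-degree at most $C_2$, not just most of them. I would handle this by modifying $T_0$ to evict the vertices of $H$ from the tree-band, pushing each $u\in H$ to tree-depth strictly greater than $k_1$. Concretely, sever the edge from $u$ to its $T_0$-parent and reattach $u$ (together with as much of its $T_0$-subtree as needed) under a fresh host vertex at BFS-depth at least $k_1$, reachable from $u$ via a short lateral path in $G$. The expander has $\Omega_\alpha(n)$ vertices at BFS-depth near $k_1$ (this is again a consequence of the complementary shrinking of $V\setminus B_t$), so the $|H|\le\epsilon n/8$ evictions can be distributed over an anchor set of linear size, each anchor receiving only $O_\alpha(1)$ new tree-children; this prevents any new vertex from becoming high-degree. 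Former children of an evicted $u$ that must remain in the band are re-parented greedily: each such child either has an alternative BFS-backward neighbor, or can be reconnected through a lateral detour whose existence is guaranteed by $\alpha$-expansion. The resulting spanning tree $T$ has every vertex in $\bigcup_{i=k_0}^{k_1}L_i$ of tree-degree at most $C_2$, while the band still covers at least $(1-\epsilon)n$ vertices. The delicate bookkeeping for simultaneous re-parentings, together with the expansion estimates needed to find all the required short lateral paths, will be the most technical part of the argument.
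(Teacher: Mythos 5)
The survey states this lemma but does not prove it --- the text introduces it as borrowed from the manuscript~\cite{FKN18} and proceeds directly to its application --- so there is no in-paper argument against which to compare. Judged on its own merits, your proposal is correct and natural in its first two stages but leaves a genuine gap in the third.

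The window choice is fine: geometric growth of BFS balls up to size $n/2$ (Proposition~\ref{p1}) and the complementary shrinkage of $n-|B_t|$ once $|B_t|>n/2$ do give $k_0=O_\alpha(\log n)$ and $k_1-k_0=O_\alpha(\log(1/\epsilon))$, and the Markov bound (total tree degree $<2n$ gives at most $\epsilon n/8$ band vertices of degree $>16/\epsilon$) is also correct. The gap is in the eviction step, which you rightly flag as ``the most technical part'' but do not resolve. The obstruction is concrete: a single vertex $u\in H$ at BFS-level $i$ can have $\Theta(n)$ children in $T_0$. Moving $u$ to depth $>k_1$ does not help unless you also deal with those children, and there are only two options, both problematic. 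If the children accompany $u$ to depth $>k_1$, the band loses $\Theta(n)$ vertices and coverage fails. If they are re-parented within $L_i$, you must exhibit alternative level-$i$ parents; but it is entirely consistent with $\alpha$-expansion that many level-$(i+1)$ vertices have $u$ as their \emph{unique} level-$i$ neighbor (their other edges go sideways to $L_{i+1}$ or forward to $L_{i+2}$), so a Hall-type matching between $L_{i+1}$ and $L_i$ with bounded fan-in need not exist. Expansion of a set $S\subseteq L_{i+1}$ bounds $|N_G(S)|$, not $|N_G(S)\cap L_i|$, and the latter is what re-parenting requires. Moreover, the ``short lateral paths'' you invoke to route $u$ to an anchor at depth $\approx k_1$ would themselves pass through band vertices whose tree-degrees could then exceed $C_2$, and the number of re-parenting operations is not bounded by $|H|$ but by the total number of children of $H$-vertices, which can be close to $n$.

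Two pointers that would tighten the argument. First, use the separator--expander link (Proposition~\ref{prop-sep1}) to observe that any BFS level $L_i$ with both $|B_{i-1}|$ and $|V\setminus B_i|$ at least $\gamma n$ is a separator of two sets of size $\ge\gamma n$, hence $|L_i|\ge\alpha\gamma n$; there are therefore only $O_{\alpha,\gamma}(1)$ such levels, which is an alternative (and in some ways cleaner) route to $k_1-k_0=O_{\alpha,\epsilon}(1)$ that also tells you the band levels have \emph{linear} size, bounding the \emph{average} fan-in between consecutive band levels. Second, for each $u$ in a band level $L_i$, the set $W_u$ of vertices at levels $\ge i+1$ whose every $G$-path to $B_{i-1}$ passes through $u$ satisfies $N_G(W_u)\subseteq\{u\}$, so by expansion $|W_u|\le 1/\alpha$ once $|B_i|>1/\alpha$; this bounds the \emph{forced} children of $u$ but not the children a naive parent assignment might saddle it with, and converting this local bound into a global bounded-fan-in assignment is exactly the piece your writeup still needs.
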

The lemma guarantees that one can find a (spanning) tree $T$ in $G$ with few consecutive ``thick" layers $L_i$, $i=O(\log n)$, containing nearly all vertices of $G$ and such that the degrees in $T$ of vertices in these thick layers are all bounded.

Let now $T$ be the tree guaranteed by Lemma ~\ref{l-cycle_lengths} with $\epsilon=\alpha/4$ and arbitrary $v$. Denote $W=\bigcup_{i=k_0}^{k_1} L_i$. Observe that every $U\subset W$, $|U|=n/2$, has at least $\alpha |U|- (n-|W|)\ge \alpha n/4$ neighbors in $W$. Hence $W$ spans a path $P$ of length at least $\alpha n/4$ by Proposition~\ref{p-DFS}.

For each $u\in L_{k_0}$ let $T_u$ be the full subtree of $T$ rooted at $u$. Due to the bounded degree assumption on $T$ we have $|T_u|\le C_2^{C_1}$.

For each $u\in L_{k_0}$ with $T_u\cap V(P)\ne\emptyset$ (there are at least $\frac{|V(P)|}{C_2^{C_1}}=\Theta(n)$ of those) choose a highest (closest to $v$) vertex $x\in T_u\cap V(P)$. Let $X$ be the set of chosen vertices. Now, choose a maximum subset $X_0\subseteq X$ such that the distance along $P$ between any two vertices of $X_0$ is at least $C_1+1$. We have still $|X_0|=\Theta(n)$.

Let $T'$ be the minimal subtree of $T$ whose set of leaves is $X_0$, let $r$ be its root. Due to the minimality $T'$ branches at $r$. Let $A\subset X_0$ be the set of vertices of $X_0$ in one of the branches, and let $B=X_0-A$. We can assume that $|B|\ge |A|$, implying $|B|\ge |X_0|/2$. Let $a$ be an arbitrary vertex of $A$. At least $|B|/2\ge |X_0|/4$ vertices of $B$ are on the same side of $a$ in $P$, denote this set by $B_0$.

For every $b\in B_0$ we get a cycle $C_b$ in $G$ as follows: first, the path from $r$ to $a$ in $T'$, then we move from $a$ to $b$ along $P$, and finally take the path from $b$ to $r$ in $T'$. (These three pieces do not intersect internally, as $a$ and $b$ were chosen as highest vertices of $P$ in their corresponding subtrees $T_u$, $u\in L_{k_0}$.) It is easy to check that the lengths of the cycles $C_b$ are all distinct. Altogether we get $|B_0|=\Theta(n)$ different cycle lengths as promised.

\section{Minors in expanding graphs}\label{s-minors}
Minors is one of the most central notions in modern graph theory. It is thus only natural to expect to see meaningful research connecting expanding graphs and minors. And indeed, there have been several papers, addressing this subject directly or indirectly. We will mention some of them in this section, along with a description of new results due to Rajko Nenadov and the author.

First we recall the definition of a minor.  Let $G=(V,E)$, $H=(U,F)$ be graphs with $U=\{u_1,\ldots, u_k\}$. We say that $G$ contains $H$ as a {\em minor} if there is a collection $(U_1,\ldots,U_k)$ of pairwise disjoint vertex subsets (supernodes) in $V$ such that each $U_i$ spans a connected subgraph in $G$, and whenever $(u_i,u_j)\in F$, the graph $G$ has an edge between $U_i$ and $U_j$. (Then contracting each $U_i$ to a single vertex produces a copy of (a supergraph of) $H$.) Notice that if $G$ contains a minor of $H$ then $|V(G)|\ge |V(H)|$ and $|E(G)|\ge |E(H)|$; these trivial bounds provide an obvious but meaningful benchmark for minor embedding statements.

As we discussed in Section~\ref{s-separators}, there is a very close connection between expanders and graphs without small separators. This connection  enables to extend known results about embedding minors in graphs without small separators to claims about embedding minors in expanding graphs. In particular, it follows from the result of Plotkin, Rao and Smith~\cite{PRS94} that an $\alpha$-expanding graph on $n$ vertices, $\alpha>0$ a constant, contains a minor of the complete graph $K_{c\sqrt{n/\log n}}$. (In fact, the complete minor delivered by the result of~\cite{PRS94} is shallow, i.e., each supernode $U_i$ has diameter $O(\log n)$ in $G$.) An even stronger result has been announced by {Kawarabayashi and Reed, who claimed in~\cite{KR10} that (translating it to the language of expanders) an $\alpha$-expanding graph on $n$ vertices contains a minor of  $K_{c\sqrt{n}}$. Also, Kleinberg and Rubinfeld proved in~\cite{KR96} that an $\alpha$-expander on $n$ vertices  of maximum degree $\Delta$ contains all graphs with $O(n/\log^{\kappa}n)$ vertices and edges as minors, for $\kappa=\kappa(\alpha,\Delta)>1$. Phrasing it differently, a bounded degree $\alpha$-expander is {\em minor universal} for all graphs with $O(n/\log^{\kappa}n)$ vertices and edges. This is optimal up to polylogarithmic factors due to the above stated trivial bound, as there exist $n$-vertex expanders with $\Theta(n)$ edges. (Formally, the number of vertices $n$ is another bottleneck here.) It is worth mentioning here that the bounded maximum degree assumption $\Delta(G)=\Delta=O(1)$ is essential for the argument of Kleinberg and Rubinfeld, as the latter relies heavily on mixing properties of random walks; this issue has been briefly touched upon in Section~\ref{s2}.

We now present recent results, along with outlines of their proofs.

\subsection{Large minors in expanding graphs}\label{ss-minors1}
Our aim here is to present a universality-type result, guaranteeing the existence of a minor of {\em every} graph with bounded (as a function of $n$) number of vertices and edges in every $\alpha$-expanding graph. This theorem is obtained jointly with Rajko Nenadov.

\begin{thm}\label{t-minors1}
For every $\alpha>0$ there exist $c,n_0$ such that the following holds. Let $G$ be a graph on $n\ge n_0$ vertices, and let $H$ be a graph with at most $cn/\log n$ vertices and edges. Then $G$ has a separator of size at most $\alpha n$, or $G$ contains $H$ as a minor.
\end{thm}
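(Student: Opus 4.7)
The plan is to first pass to an honest expander, then build the minor greedily edge by edge using short connecting paths. If $G$ has a separator of size at most $\alpha n$, we are done; otherwise Proposition~\ref{prop-sep2} with $\beta=\alpha$ yields an induced $(3\alpha/2)$-expander $G_0\subseteq G$ on $n_0\ge 2n/3$ vertices. Set $\alpha_0 = 3\alpha/2$; after adjusting $c$ by an absolute constant, it suffices to embed every $H$ with at most $c'n_0/\log n_0$ vertices and edges as a minor in $G_0$. By Corollary~\ref{coroll2} the host graph satisfies $\diam(G_0) = O_\alpha(\log n)$.

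To build the minor, I would fix pairwise distinct representatives $v_1,\ldots,v_k\in V(G_0)$ for the vertices of $H$, initialize supernodes $U_i := \{v_i\}$, and process the edges $e=(u_i,u_j)$ of $H$ one at a time. At each step, with $W$ denoting the current union of all supernodes, I would locate a path $P_e$ from $U_i$ to $U_j$ in $G_0\setminus(W\setminus(U_i\cup U_j))$ of length $L=O_\alpha(\log n)$; I would split $P_e$ at its midpoint and attach the two halves to $U_i,U_j$ respectively, so that each supernode stays connected and the middle edge of $P_e$ yields the required adjacency. Throughout the procedure $|W|\le k + L\cdot|E(H)| = O(cn_0)$, which for $c$ small is at most $\alpha_0 n_0/C$ for any prescribed constant $C$. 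The crucial step is guaranteeing the short path $P_e$: applying the $\alpha_0$-expansion of $G_0$ to the union $S$ of small components of $G_0\setminus W$ forces $|S|\le |W|/\alpha_0$, so $G_0\setminus W$ has a unique large component $V^*$ of size $n_0 - O(|W|/\alpha_0)$. Inside $V^*$ the inherited relative expansion $|N_{V^*}(B)|\ge \alpha_0|B|-|W|$ (for $|B|\le n_0/2$) makes any ball that exceeds the threshold $2|W|/\alpha_0$ grow by factor $1+\alpha_0/2$ per step, so two such balls reach size $|V^*|/2$ within $O_\alpha(\log n)$ steps and must intersect, yielding the desired path.

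The main obstacle I anticipate is \emph{bootstrapping} this ball growth when the supernode $U_i$ is still tiny at the moment it is first used: the threshold $2|W|/\alpha_0$ can be of order $cn$, whereas a singleton $U_i$ has size $1$. I would address this by selecting the representatives $v_i$ from a ``core'' subset of $V(G_0)$ whose $G_0$-balls of radius $O_\alpha(\log n)$ are guaranteed to have large intersection with every $V^*$ encountered during the procedure -- for instance, by choosing the $v_i$ uniformly at random and absorbing a union bound over the structurally simple family of potential ``bad'' sets $V(G_0)\setminus V^*$ that can arise, possibly combined with a preliminary seeding phase that enlarges each $U_i$ past the expansion threshold before the main loop begins. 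Simultaneously balancing the size of these seeds against the $O(cn)$ budget on $|W|$ and the expansion threshold $|W|/\alpha_0$, while maintaining pairwise disjointness and the overall $O(cn/\log n)$ edge count, is the delicate bookkeeping that I expect to form the technical heart of the proof.
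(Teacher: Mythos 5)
The paper's own proof does not pass to an expander and then try to always succeed; instead it runs a ``separator or minor'' dichotomy algorithm directly on $G$, maintaining a tripartition $V = A\cup B\cup C$ with the invariant $|N_G(A,C)|\le\alpha|A|$. Crucially, it never fixes vertex representatives in advance: after reducing $H$ to maximum degree $3$ by vertex-splitting, each new supernode $B_i$ is constructed \emph{from scratch} as a connected set of size $O(\log n)$ living entirely in the current reservoir $C$, connecting up to three previously designated neighbor vertices $v_j\in N_G(B_j,C)$, which is possible exactly when $G[C]$ has logarithmic diameter. Whenever that fails, Lemma~\ref{l-minors} hands back a non-expanding subset of $C$ which is dumped into $A$; similarly, any $B_j$ stripped of all its neighbors in $C$ is dumped into $A$. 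Because every piece added to $A$ is $\alpha$-non-expanding into $C$, once $|A|$ crosses $n/3$ the set $N_G(A)$ is a small separator. This design means nothing ever has to be grown from a singleton past an expansion threshold.

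Your proposal has a genuine gap precisely where you flag it, and I don't think the proposed patch saves it. Once you fix representatives $v_1,\dots,v_k$ up front and let $|W|$ climb to $\Theta(cn)$, the relative expansion estimate $|N_{V^*}(B)|\ge \alpha_0|B|-|W|$ only gives geometric growth for balls of size $\gtrsim |W|/\alpha_0 = \Theta(cn/\alpha_0)$. A singleton representative first touched late in the process simply never reaches that threshold, so you cannot find the required short path. The suggested remedies are not convincing: the family of sets $V(G_0)\setminus V^*$ that can arise is determined by the full adaptive history of the algorithm, not by any ``structurally simple'' class one could union-bound over; and a preliminary seeding phase that grows each $U_i$ past the threshold $\Theta(cn/\alpha_0)$ would already spend $\Theta(kcn/\alpha_0)$ vertices, which is not $O(n)$ unless $k=O(1/c)=O(\log n)$, far short of $cn/\log n$. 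The resolution the paper uses --- choose the $\log n$-size supernode inside $C$ at the moment it is needed, rather than pre-committing a base vertex, and fall back to a separator whenever the reservoir refuses to have small diameter --- removes the bootstrapping problem rather than trying to overcome it. You also omit the reduction of $H$ to maximum degree $\le 3$; your edge-at-a-time scheme makes this less critical for the $|W|$ budget, but it is worth noting that the paper needs it because each supernode is attached to all of its at most three prior neighbors in a single step.
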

(We bound the number of vertices in $H$ as well so as to avoid the pathological case of $H$ having $O(n/\log n)$ edges, but many more vertices, say, more than $n$, thus making the embedding of $H$ as a minor in $G$ impossible.)

 The guarantee $|V(H)|+|E(H)|=O(n/\log n)$ of the above theorem is asymptotically optimal up to multiplicative constants due to the following argument. Let $G$ be an expanding graph of $n$ vertices with logarithmic girth $\girth(G)=\Theta(\log n)$, say, the celebrated Lubotzky-Phillips-Sarnak graph~\cite{LPS88}, and let $H$ be a collection of $k$ vertex-disjoint triangles. If $H$ is a minor of $G$, then the image of every triangle in a minor embedding of $H$ in $G$ contains a cycle, and these cycles are disjoint for distinct triangles in $H$. It follows that $k\cdot \girth (G)\le n$, implying $k=O(n/\log n)$. Hence the largest collection of disjoint triangles that can be embedded in $G$ as a minor counts $O(n/\log n)$ vertices.

\begin{proof}
The proof borrows substantially from the ideas of Plotkin, Rao and Smith~\cite{PRS94}. Just as in many arguments of this sort, we start by assuming for convenience (of the proof) that the target graph $H=(U,F)$ has maximum degree (at most) three. This is possible as by splitting the vertices of $H$ and edges emanating from them, we can construct a graph $H'=(U',F')$ with $\Delta(H')\le 3$ and $|U'|=O(|U|+|F|)$ so that $H$ is a minor of $H'$. Since minor containment is transitive, we can embed instead $H'$ in $G$ as a minor.

Rephrasing Corollary~\ref{coroll2}, we get the following handy lemma:
 \begin{lemma}\label{l-minors}
For every $\alpha>0$ there exists $C_0>0$ such that every graph $G$ on $n$ vertices has diameter at most $C_0\log n$, or contains a subset $U\subset V(G)$ satisfying: $|U|\le n/2$ and $|N_G(U)|\le \alpha n$.
\end{lemma}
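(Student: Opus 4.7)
The plan is to derive this as a direct contrapositive of Corollary~\ref{coroll2}. First I would assume that $G$ contains no subset $U$ as described in the conclusion, so that every $U\subset V(G)$ with $|U|\le n/2$ satisfies $|N_G(U)|>\alpha n$. Since $|U|\le n/2\le n$, this immediately implies $|N_G(U)|>\alpha n\ge \alpha|U|$, which is exactly the condition in Definition~\ref{def-expander}. Thus $G$ is an $\alpha$-expander, and Corollary~\ref{coroll2} yields $\diam(G)\le \lceil 2(\log n-1)/\log(1+\alpha)\rceil+1$. Choosing, say, $C_0=C_0(\alpha):=2/\log(1+\alpha)+2$ makes this at most $C_0\log n$ for all $n\ge 2$, which selects the desired constant depending only on $\alpha$.

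The only minor subtlety to handle will be the mild mismatch between the two notions of ``non-expansion'' in play: the negation of being an $\alpha$-expander in Definition~\ref{def-expander} reads $|N(U)|<\alpha|U|$, whereas the failure alternative in the lemma reads $|N_G(U)|\le \alpha n$. The trivial inequality $\alpha|U|\le\alpha n$ bridges this gap in exactly the direction we need, so no further work is required.

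As a side remark, the hypothesis is in fact considerably stronger than what is needed for an $O(\log n)$ diameter bound, and a direct ball-growing argument yields $\diam(G)=O_\alpha(1)$. Indeed, for any vertex $v$, as long as $|B_t(v)|\le n/2$ one has $|B_{t+1}(v)|\ge |B_t(v)|+|N_G(B_t(v))|>|B_t(v)|+\alpha n$, so $|B_t(v)|$ exceeds $n/2$ within at most $\lceil 1/(2\alpha)\rceil$ steps, and then balls of that radius around any two vertices must intersect. But since only an $O(\log n)$ bound is needed in the application to Theorem~\ref{t-minors1}, the reduction to Corollary~\ref{coroll2} is the cleanest route and best fits the text's phrasing ``Rephrasing Corollary~\ref{coroll2}''. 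I expect no real obstacle here; the whole argument is essentially a one-line contrapositive, plus the bookkeeping to pick $C_0$.
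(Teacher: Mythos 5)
Your proof is correct and is exactly the intended argument: Lemma~\ref{l-minors} is the contrapositive reading of Corollary~\ref{coroll2}, and your bookkeeping for $C_0$ is sound. Your side remark is also on the mark --- the condition $|N_G(U)|\le\alpha n$ as written forces constant diameter under its negation, which suggests the lemma carries more slack than needed; what the application inside Theorem~\ref{t-minors1} actually invokes is a set $U$ with $|N_{G}(U,C\setminus U)|\le\alpha|U|$, the tighter condition for which the $O(\log n)$ bound is the right order of magnitude.
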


The proof of Theorem~\ref{t-minors1} is described as an algorithmic procedure that, given $G$ and $H$ with $|V(H)|=k\le \frac{\alpha n}{6C_0\log n}$ and $\Delta(H)\le 3$ (where $C_0$ is the constant from Lemma~\ref{l-minors}), outputs a small separator  in $G$ or a minor embedding of $H$ in $G$. In fact, this procedure can be easily turned into a polynomial time algorithm performing this task. Let us assume $V(H)=[k]$.  The algorithm maintains and updates a partition $V=A\cup B\cup C$ of the vertex set of $G$, where the set $A$ will eventually contain a large and non-expanding set, witnessing the existence of a small separator in $G$, or the set $B$ will contain the desired minor embedding of $H$; the set $C$ will serve as a vertex reservoir. We will maintain $|N_G(A,C)|\le \alpha |A|$. We will also maintain and update a subset $I_0\subset [k]$, which will describe the current induced subgraph of $H$, minor embedded in $G[B]$. Accordingly, there will be disjoint subsets $B_i\subseteq B$, $i\in I_0$, each spanning a connected subgraph in $G$; we commit ourselves to having always: $|B_i|\le 2C_0\log n$.

We initialize $A=B=\emptyset$, $C=V$, $I_0=\emptyset$.

The procedure will repeat the following loop. Choose an arbitrary $i\in [k]\setminus I_0$. Let $X$ be the set of neighbors of $i$ in $I_0$, $|X|\le 3$. For $j\in X$, choose an arbitrary $v_j\in N_G(B_j,C)$. If for some $j\in X$ there is no such neighbor, we dump $B_j$ into $A$ and update accordingly: $A:=A\cup B_j$, $B:=B\setminus B_j$, $I_0:=I_0\setminus \{j\}$. Otherwise, we apply Lemma~\ref{l-minors} to the induced subgraph $G[C]$. In case it outputs a set $U$, $|U|\le |C|/2$, $|N_G(U,C\setminus U)|\le \alpha |U|$, we move $U$ over to $A$ and update: $A:=A\cup U$, $C:=C\setminus U$. In the complementary case, where $G[C]$ has logarithmic diameter, we find a subset $Y\subset C$ of size $|Y|\le 2\cdot \diam(G[C])\le 2C_0\log n$, such that $G[Y]$ is connected and contains vertices $v_j, j\in X$ (choose one of $v_j$'s as the pivot vertex and connect it by paths of length at most $C_0\log n$ to other $v_j$'s).  We then update: $B:=B\cup Y$, $C:=C\setminus Y$, $B_i:=Y$, $I_0=I_0\cup \{i\}$. (In case $X=\emptyset$, we can simply take $Y$ to be an arbitrary vertex of $C$ and perform the same update.) The set $B_i$ is connected and has an edge connecting it to $B_j$, $j\in X$.

Since we always add to the current $A$ a piece $U$ satisfying $|N_G(U,C)|\le \alpha |U|$, and the set $C$ only shrinks as the algorithm proceeds, we observe that indeed at any point of the execution the current set $A$ satisfies: $|N_G(A,C)|\le \alpha |A|$. Also, the set $B$ is not too large -- it is always composed of sets $B_i$, $i\in I_0$, and thus has at most $k\cdot 2C_0\log n=\alpha n/3$ vertices.

The above procedure eventually reaches the situation when $A$ exceeds $n/3$ or $I_0=[k]$. In the former case, it is easy to see that the first moment the size of $A$ goes above $n/3$, it satisfies: $|A|\le 2n/3$. Also, $|N_G(A)|\le |B|+|N_G(A,C)|\le \alpha n/3+\alpha|A|\le \alpha n/3+2\alpha n/3=\alpha n$. In this case, the set $N_G(A)$ witnesses the existence of a small separator in $G$, separating between $A$ and $V(G)\setminus (A\cup N_G(A)$. In case where the stopping criterion has been reached due to $I_0=[k]$, the set $B$ spans a minor of $H$ in $G$, where the image of $u_i\in V(H)$ is the (connected) set $B_i$.
\end{proof}

\begin{corol}\label{cor-minors}
Let $G$ be an $\alpha$-expander on $n$ vertices. Then $G$ is minor universal for the class of all graphs $H$ with at most $cn/\log n$ vertices and edges, for some $c=c(\alpha)>0$.
\end{corol}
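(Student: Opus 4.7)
The plan is to obtain the corollary as a direct consequence of Theorem~\ref{t-minors1}, using Proposition~\ref{prop-sep1} to rule out the separator alternative. Since $G$ is an $\alpha$-expander on $n$ vertices, Proposition~\ref{prop-sep1} guarantees that every separator in $G$ has size at least $\frac{\alpha n}{3(1+\alpha)}$. Set
$$
\alpha' := \frac{\alpha}{4(1+\alpha)},
$$
which is strictly smaller than $\frac{\alpha}{3(1+\alpha)}$, so $G$ admits no separator of size at most $\alpha' n$.

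Next, I would invoke Theorem~\ref{t-minors1} with the parameter $\alpha'$ in place of $\alpha$. The theorem supplies constants $c' = c(\alpha') > 0$ and $n_0' = n_0(\alpha')$ such that for every $n \ge n_0'$ and every graph $H$ with at most $c' n / \log n$ vertices and edges, $G$ either has a separator of size at most $\alpha' n$ or contains $H$ as a minor. By the choice of $\alpha'$ the first alternative is impossible, so the minor embedding must exist.

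To handle the finitely many exceptional values $n < n_0'$, I would take $c = c(\alpha)$ small enough that $c n / \log n < 1$ for every such $n$ (where $\log n$ is understood, say, to base $2$, and $n \ge 2$). This forces the only admissible $H$ to have no vertices and no edges, which is trivially a minor of $G$. Setting the constant in the corollary to be the minimum of $c'$ and this small boundary value yields a single $c = c(\alpha) > 0$ that works uniformly in $n$.

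There is no real obstacle in this argument; all the work has already been carried out inside Theorem~\ref{t-minors1}, and the corollary is merely a convenient repackaging via the easy separator lower bound of Proposition~\ref{prop-sep1}. The only point warranting attention is the strict comparison $\frac{\alpha}{4(1+\alpha)} < \frac{\alpha}{3(1+\alpha)}$, which is what legitimates feeding the smaller parameter $\alpha'$ into Theorem~\ref{t-minors1}.
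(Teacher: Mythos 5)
Your proof is correct and takes exactly the approach the paper intends: Corollary~\ref{cor-minors} is stated immediately after Theorem~\ref{t-minors1} as a direct consequence, obtained by observing via Proposition~\ref{prop-sep1} that an $\alpha$-expander has no sublinear separators, so the separator alternative in Theorem~\ref{t-minors1} cannot hold. The extra care you take in choosing $\alpha'$ strictly below the Proposition~\ref{prop-sep1} bound and in handling the finitely many small $n$ is a correct and tidy way to make the deduction fully rigorous.
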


Let us add few remarks here. First, as we have indicated, the proof presented is algorithmic and allows to find in time polynomial in $n$ a minor embedding of a given graph $H$ with $|V(H)|+|E(H)|\le cn/\log n$. Secondly, the minor embedding produced is shallow, i.e., the image of every vertex $u\in V(H)$ in the embedding is a connected set $U$ in $G$, spanning a graph of diameter logarithmic in $n$. Working out the details of the proof, we see that the constant $c$ in Theorem~\ref{t-minors1}, and thus in Corollary~\ref{cor-minors} depends quadratically on $\alpha$. Finally, notice that the argument above can be trivially adapted to the case where $\alpha=\alpha(n)$ is vanishing.

\subsection{Complete minors in expanding graphs}\label{ss-minors2}
Theorem~\ref{t-minors1} guarantees that every expanding graph $G$ on $n$ vertices contains every graph $H$ with $O(n/\log n)$ vertices and edges as a minor. As we argued in the previous subsection, this estimate on the size of $H$ is optimal. For some particular types of $H$ however we can do better. For example, due to Proposition~\ref{p-DFS} an expanding $n$-vertex graph $G$ contains a path of length linear in $n$. One can also prove easily that such $G$ contains a linearly sized star as a minor. (First find a linearly long path $P$ in $G$, then argue that due to expansion one can find in $G$ a matching $M$ of linear size with every edge intersecting $V(P)$ in exactly one vertex; contracting $V(P)$ to a single vertex produces the required star.)

A particularly prominent class of graphs to try and find as minors is cliques. How large a clique minor can one hope to find in a weak expander? Denote by $ccl(G)$ the clique contraction number of $G$, which is the order of a largest clique minor to be found in $G$.  Recall that if $H$ is a minor of $G$, then $|E(H)|\le |E(G)|$. Since weak expanders  on $n$ vertices can have linearly few in $n$ edges, the best bound one can hope to get is $ccl(G)=\Theta\left(\sqrt{n}\right)$. Theorem~\ref{t-minors1} comes quite close and guarantees that a graph $G$ on $n$ vertices without separators of small linear size has $ccl(G)=\Omega\left(\sqrt{n/\log n}\right)$.

The above mentioned result of Kawarabayashi and Reed~\cite{KR10} improves this estimate, proving that a graph $G$ on $n$ vertices without sublinear separators satisfies: $ccl(G)=\Omega\left(\sqrt{n}\right)$.  In fact, their result is much more general as it proves the following alternative: for any $h(n)$, an $n$-vertex graph $G$ has a minor of the complete graph $K_h$, or a separator of order $O(h\sqrt{n})$.

Here we present a proof of the following theorem, a joint work with Rajko Nenadov.
\begin{thm}\label{t-minors2}
For every $\alpha>0$ there exist $c,n_0$ such that the following holds. Let $G$ be a graph on $n\ge n_0$ vertices and maximum degree $d$. Assume that the edge isoperimetric number $i(G)$ satisfies: $i(G)\ge \alpha d$. Then $ccl(G)\ge c\sqrt{n}$.
\end{thm}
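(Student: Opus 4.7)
The first observation is that the edge-isoperimetric hypothesis $i(G)\ge\alpha d$ upgrades to vertex expansion: for any $U\subseteq V(G)$ with $|U|\le n/2$ one has $e_G(U,V\setminus U)\ge\alpha d|U|$, and since every vertex of $N_G(U)$ absorbs at most $d$ boundary edges, $|N_G(U)|\ge\alpha|U|$. Hence $G$ is an $\alpha$-expander in the sense of Definition~\ref{def-expander}, and by Proposition~\ref{prop-sep1} no separator of $G$ has fewer than $\alpha n/(3(1+\alpha))$ vertices. This brings the entire machinery of the previous sections into play.

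The plan is then to build a $K_k$ minor with $k=\lfloor c\sqrt n\rfloor$ by growing $k$ BFS clusters and pairwise linking them through a reservoir of unused vertices. Set $t=\lceil n/(3k)\rceil=\Theta(\sqrt n)$ and iteratively choose seed vertices $v_1,\ldots,v_k$, growing around each $v_i$ a connected BFS tree $T_i$ of exactly $t$ vertices inside $G\setminus\bigcup_{j<i}T_j$. Throughout the construction the removed set has size at most $n/2$, and the $\alpha$-expander property (via the standard observation that any small component in the residual graph has its whole $G$-neighborhood inside the removed set, bounding the mass outside the giant component by $|Z|/\alpha$) guarantees a connected residual of linear size, so the BFS always proceeds. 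This yields disjoint connected sets $T_1,\ldots,T_k$ occupying only $n/2$ vertices and leaves a reservoir $R$ of size $\ge n/2$ into which the clusters can still be expanded.

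For each pair $(i,j)$ that does not already share an edge of $G$, I would augment $T_i$ by attaching a short path from $T_i$ to $T_j$ routed through $R$. The induced subgraph on $R\cup T_i\cup T_j$ still contains a large $\Omega(\alpha)$-expander (via Lemma~\ref{lem1}), so by Corollary~\ref{coroll2} it has diameter $O(\log n)$, and a path of length $O(\log n)$ always exists. To avoid the resulting logarithmic overhead --- which, if applied naively to all $\binom{k}{2}=O(n)$ pairs, would exhaust the reservoir and only deliver the Plotkin--Rao--Smith bound $\Omega(\sqrt{n/\log n})$ --- one must exploit the full edge expansion: the hypothesis $i(G)\ge\alpha d$ supplies $\Omega(dn)$ boundary edges across any balanced cut, enough to afford $\Omega(d)$ candidate connecting edges per pair on average, and an amortized charging argument (each step of a routing path against a distinct unused boundary edge rather than against a reservoir vertex) aims to keep total vertex consumption linear.

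The main obstacle is precisely this final accounting: to eliminate the $\sqrt{\log n}$ penalty, the proof cannot treat the $\binom{k}{2}$ connections independently, but must bundle them through a shared edge-expander structure so that each reservoir vertex participates in many pair-connections simultaneously. This is the point where the distinction between $\alpha$-vertex expansion (which alone would only yield the weaker $\Omega(\sqrt{n/\log n})$ bound via Theorem~\ref{t-minors1}) and the stronger edge-isoperimetric assumption $i(G)\ge\alpha d$ becomes indispensable, and where an argument in the spirit of Alon--Seymour--Thomas or Kawarabayashi--Reed, carried out directly against the edge boundary, must be invoked.
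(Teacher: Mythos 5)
You correctly reduce $i(G)\ge\alpha d$ to $\alpha$-vertex expansion, and you correctly diagnose the crux: connecting $\binom{k}{2}$ pairs by independent $O(\log n)$-paths exhausts the reservoir and can only give the Plotkin--Rao--Smith bound $\Omega(\sqrt{n/\log n})$. But your proposal stops exactly there. The ``amortized charging argument'' against boundary edges is not carried out, and it is not at all clear it can be: the obstruction is not a shortage of boundary \emph{edges} but of reservoir \emph{vertices} through which paths must pass, and each reservoir vertex has degree at most $d$, so charging path steps to distinct boundary edges does not obviously control vertex consumption. Appealing to ``an argument in the spirit of Alon--Seymour--Thomas or Kawarabayashi--Reed'' names the difficulty without resolving it.

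The paper's proof attacks the accounting problem differently, and your final sentence (``bundle them \ldots\ so that each reservoir vertex participates in many pair-connections simultaneously'') is a correct description of the idea you do not supply. The supernodes are built one at a time: $B_i$ is taken to be the trace of a lazy random walk in the current reservoir $G[C]$, together with shortest paths from the trace to $N_G(B_j,C)$ for every already-built $B_j$, so a single set $B_i$ connects to \emph{all} earlier supernodes at once. The random-walk analysis uses the edge-isoperimetric assumption through conductance: $\Phi(G[C])\ge i(G[C])/2d \ge \beta/2$, Jerrum--Sinclair gives a constant spectral gap, and a Mossel-et-al.\ avoidance bound (Theorem~\ref{thm:spectral}) controls the probability the walk misses a target set. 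The resulting covering lemma (Lemma~\ref{lemma:covering}) produces a connected hitting set of size $O((n/s)\ln(ks/n))$ for $k$ sets of size $\ge s$. The decisive quantitative step you are missing is that here $s=\Theta(b)=\Theta(\sqrt{n})$ and $k=\Theta(\sqrt{n})$, so $ks=\Theta(n)$ and the logarithmic factor $\ln(ks/n)$ is $O(1)$. This is precisely what collapses the $\sqrt{\log n}$ penalty to a constant, and it also explains why the argument needs the stronger hypothesis $i(G)\ge\alpha d$ rather than mere $\alpha$-vertex expansion: it is the conductance, not the vertex expansion, that drives the random-walk mixing. As written, your proposal identifies the right obstacle and the right intuition but contains a genuine gap where the proof's actual mechanism should go.
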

Here the edge isoperimetric number $i(G)$ of  a graph $G=(V,E)$ is defined as:
$$
i(G)=\min_{\substack{U\subset V\\ |U|\le |V|/2}}\frac{e_G(U,V\setminus U)}{|U|}\,.
$$
This quantity is fairly similar to the Cheeger constant $h(G)$ (to the extent that it is sometimes called the Cheeger constant of $G$); they are in fact exactly equal up to the factor $d$ for the case of $G$ being a $d$-regular graph.

The above theorem is superseded by the much more general result of Kawarabayashi and Reed. However, the proof of the latter is quite involved, and the full version of the conference paper~\cite{KR10} is still waiting to be published. We thus believe that presenting a reasonably short proof of Theorem~\ref{t-minors2} -- whose outline will come shortly -- certainly has merit.

Theorem~\ref{t-minors2} implies in particular that an $\alpha$-expanding graph on $n$ vertices with bounded degrees has a complete minor of the order of magnitude $\sqrt{n}$. The assumption about edge expansion being comparable to the maximum degree is an artifact of the proof and its techniques (specifically, the use of random walks), and can perhaps be lifted.

\begin{proof}
The proof is somewhat similar to the proof of Theorem~\ref{t-minors1}, and uses some ideas from the arguments of Plotkin, Rao and Smith~\cite{PRS94}, and of Krivelevich and Sudakov~\cite{KS09}. Basically, we construct a complete minor of $K_k$ in $G$, for an appropriately chosen $k$, a supernode by a supernode. Each time, when embedding the next supernode $B_i$ in $G$, we  need to make sure that $B_i$ both spans a connected graph in $G$, and contains a neighbor of every presently embedded supernode $B_{j}$; due to expansion we can assume that the set of neighbors $U_j$ of $B_j$ is reasonably large. We will aim to do it economically, i.e., to get a relatively small set $B_i$, so as to allow enough room for embedding of $k$ supernodes. A fairly natural idea for finding such a set, already suggested in~\cite{KS09}, is to use random walks, and to take $B_i$ to be the trace of a long enough random walk $W$, hoping it will hit each of the neighborhoods $U_j$. Our hopes are supported by the expansion properties of $G$, enabling to argue that $W$ behaves similarly to a random set of the same size in terms of its hitting properties. In reality, it turns out to be more beneficial to take $B_i$ to be the trace of $W$ along with shortest paths from $W$ to each $U_j$, arguing that typically the addition of these shortest paths does not add much to the size of $W$.

Now we get to the actual proof. We start it by introducing a basic tool based on random walks. Recall that a \emph{lazy random walk} on a graph $G = (V, E)$ with the vertex set $V = \{1, \ldots, n\}$ is a Markov chain whose matrix of transition probabilities $P = P(G) = (p_{i,j})$ is defined by
\[
	p_{i,j} = \begin{cases}
		\frac{1}{2\deg_G(i)}, &\text{if } \{i,j\} \in E(G) \\
		\frac{1}{2}, &\text{if } i = j, \\
		0, &\text{otherwise.}
	\end{cases}
\]
This Markov chain has the stationary distribution $\pi$ defined as $\pi(i) = \deg_G(i) / 2e(G)$. As usual, for a subset $A\subseteq [n]$, we write $\pi(A)=\sum_{i\in A} \pi(i)$.

The following lemma gives an upper bound on the probability that a lazy random walk avoids a given subset $U \subseteq V(G)$.

\begin{lemma} \label{lemma:random_walk_miss_U}
	Let $G$ be a graph with $n$ vertices and maximum degree $d = d(n)$. Then for any $U \subseteq V(G)$ the probability that a lazy random walk on $G$, which starts from the stationary distribution $\pi$ and makes $\ell$ steps, does not visit $U$ is at most
	\[
		\exp\left(- \frac{i(G)^3}{8d^3} \cdot \frac{|U|\ell}{n}\right).
	\]		
\end{lemma}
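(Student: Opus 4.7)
The plan is to reduce this probabilistic statement to a spectral one. Let $A:=V\setminus U$ and let $P_A$ denote the substochastic operator obtained from $P$ by deleting all rows and columns indexed by $U$. Since $P$ is reversible with respect to $\pi$, so is $P_A$, and laziness of $P$ ensures that all eigenvalues of $P_A$ lie in $[0,1]$. Applying the spectral theorem to $P_A$, viewed as a self-adjoint operator on $\ell^2(A,\pi|_A)$, yields
$$\Pr_\pi[X_0,X_1,\ldots,X_\ell\in A]\;=\;\langle\mathbf{1}_A,P_A^\ell\mathbf{1}_A\rangle_\pi\;\le\;\lambda_1(P_A)^\ell\cdot\pi(A)\;\le\;\lambda_1(P_A)^\ell.$$
Everything then reduces to showing that $1-\lambda_1(P_A)$ is at least a constant multiple of $i(G)^3|U|/(d^3 n)$, after which the elementary inequality $(1-\gamma)^\ell\le e^{-\gamma\ell}$ closes the lemma.

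To lower bound $1-\lambda_1(P_A)$ I would invoke the Cheeger inequality for the Dirichlet Laplacian, namely $1-\lambda_1(P_A)\ge\phi_D^2/2$, where
$$\phi_D\;=\;\min_{\emptyset\ne S\subseteq A}\frac{Q(S,V\setminus S)}{\pi(S)}\;=\;\min_{\emptyset\ne S\subseteq A}\frac{e_G(S,V\setminus S)}{2\,\VOL_G(S)}.$$
The key combinatorial observation is that $V\setminus S\supseteq U$ for every $S\subseteq A$, which splits the minimum into two favourable regimes. For $|S|\le n/2$ the definition of $i(G)$ directly gives $e_G(S,V\setminus S)\ge i(G)|S|$ and hence $Q(S,V\setminus S)/\pi(S)\ge i(G)/(2d)$; for $|S|>n/2$, applying $i(G)$ to the smaller side $V\setminus S$ yields $e_G(S,V\setminus S)\ge i(G)|V\setminus S|\ge i(G)|U|$ against $\VOL_G(S)\le dn$, producing $Q(S,V\setminus S)/\pi(S)\ge i(G)|U|/(2dn)$. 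Combining the two cases, together with $|U|\le n$, delivers the uniform bound $\phi_D\ge i(G)|U|/(2dn)$.

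The main obstacle is the squaring loss in Cheeger: the estimate above, fed naively into $1-\lambda_1(P_A)\ge\phi_D^2/2$, yields rate $i(G)^2|U|^2/(d^2 n^2)$ in the exponent, which falls short of the claimed $i(G)^3|U|/(d^3 n)$ precisely in the small-$|U|$ regime. To close this gap one should exploit the two-regime structure of $\phi_D$ separately. In the small-set regime $|S|\le n/2$ the much stronger local bound $Q(S,V\setminus S)/\pi(S)\ge i(G)/(2d)$ controls fast \emph{internal} mixing of the chain inside $A$ on the timescale $O(d^2/i(G)^2)$, whereas the escape of the chain from $A$ into $U$ on that timescale is governed by the single-set Rayleigh quotient of $\mathbf{1}_A$, namely $Q(A,U)/\pi(A)\ge i(G)|U|/(2dn)$. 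Multiplying the internal spectral gap $\Omega(i(G)^2/d^2)$ by this escape probability $\Omega(i(G)|U|/(dn))$ reproduces the desired rate $\Omega(i(G)^3|U|/(d^3 n))$ without any logarithmic overhead; converting this informal ``mixing plus escape'' heuristic into a clean spectral estimate, for instance via a Poincar\'{e} inequality for the restricted chain combined with an escape bound for the quasi-stationary distribution of $P_A$, is the technical heart of the proof.
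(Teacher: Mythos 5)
Your reduction to the Dirichlet eigenvalue $\lambda_1(P_A)$ of the killed chain is a valid and natural alternative to the paper's route, and your diagnosis of the squaring loss in the Dirichlet--Cheeger bound is exactly right: with $\phi_D \ge i(G)|U|/(2dn)$, the naive estimate $1-\lambda_1(P_A)\ge \phi_D^2/2$ yields an exponent of order $i(G)^2|U|^2/(d^2n^2)$, which is strictly weaker than the claimed $i(G)^3|U|/(d^3n)$ whenever $|U|/n < i(G)/d$. You correctly identify the shape of the bound that would close the gap, namely $1-\lambda_1(P_A)\gtrsim \eta(G)\cdot\pi(U)$ where $\eta(G)=1-\lambda_2(P)$ is the global spectral gap, since Jerrum--Sinclair gives $\eta(G)\ge \Phi(G)^2/2 \ge i(G)^2/(8d^2)$ and $\pi(U)\ge i(G)|U|/(dn)$.

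However, you stop short of proving or locating the estimate $1-\lambda_1(P_A)\ge \eta(G)\pi(U)$ (equivalently, the restricted-chain escape bound), and you explicitly flag the ``mixing plus escape'' step as an unproved heuristic. This is a genuine gap: it is precisely the technical content that the lemma requires, and without a Poincar\'e-type argument for the quasi-stationary distribution of $P_A$ you have not established the claimed rate. The paper avoids the Dirichlet operator entirely by invoking Theorem 5.4 of Mossel, O'Donnell, Regev, Steif and Sudakov, which directly bounds the probability of staying in $A$ by $\pi(A)\bigl(1-\eta(G)(1-\pi(A))\bigr)^\ell$; combined with Jerrum--Sinclair and the trivial bound on $\pi(U)$ this gives the lemma in a few lines. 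That cited result is exactly the ``clean spectral estimate'' your heuristic is reaching for. To make your approach self-contained you would need either to cite that theorem (in which case you may as well skip the detour through $P_A$, as the paper does) or to supply a variational proof of $1-\lambda_1(P_A)\ge \eta(G)\pi(U)$, e.g.\ by decomposing the Dirichlet form of a top eigenfunction of $P_A$, supported on $A$, into a component controlled by the global Poincar\'e inequality and a boundary term weighted by $\pi(U)$.
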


We now describe the proof of Lemma~\ref{lemma:random_walk_miss_U}.

Let $1=\lambda_1 \ge \lambda_2 \ge \ldots \ge \lambda_n$ be the eigenvalues of the transition matrix $P$. The \emph{spectral gap} of $P$ (or of $G$) is defined as $\eta(G) =  1 - \lambda_2$. The following result of Mossel et al.~\cite{MORSS06} (more precisely, the first case of~\cite[Theorem 5.4]{MORSS06}) relates the spectral gap to the probability that a random walk does not leave a specific subset. We state its version tailored for our needs.

\begin{thm}[\cite{MORSS06}] \label{thm:spectral}
	Let $G$ be a connected graph with $n$ vertices and let $\eta(G)$ be the spectral gap of the transition matrix $P = P(G)$. Then the probability that a lazy random walk of length $\ell$, starting from a vertex chosen according to the stationary distribution $\pi$, does not leave a non-empty subset $A \subseteq V(G)$ is at most
	\[
		\pi(A) (1 - \eta(G)(1 - \pi(A)))^\ell.
	\]		
\end{thm}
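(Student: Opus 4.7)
The plan is to prove the spectral bound by reducing the staying probability to a quadratic form in the principal submatrix $P_A$ of the transition matrix, using reversibility to symmetrize, and then bounding the top eigenvalue of that submatrix via a variational argument tied to the Perron eigenvector of the full chain.

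First I would set things up. The probability that the walk started from $\pi$ remains inside $A$ for $\ell$ consecutive steps is exactly $\pi_A^T P_A^\ell \mathbf{1}_A$, where $P_A$ is the $|A|\times|A|$ principal submatrix of $P$, $\pi_A$ is $\pi$ restricted to $A$, and $\mathbf{1}_A$ is the all-ones vector on $A$. Reversibility of $P$ with respect to $\pi$ says $T := D^{1/2} P D^{-1/2}$ is symmetric (where $D=\mathrm{diag}(\pi)$), with the same spectrum as $P$; similarly $T_A = D_A^{1/2}P_A D_A^{-1/2}$ is symmetric and similar to $P_A$. A direct computation rewrites the staying probability as $\langle u, T_A^\ell u\rangle$, where $u(x)=\sqrt{\pi(x)}$ for $x\in A$, so that $\|u\|^2 = \pi(A)$. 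Because $P$ is lazy, $T$ is positive semidefinite, hence so is its principal submatrix $T_A$; letting $\beta_1(A)$ be the top eigenvalue of $T_A$, the Rayleigh principle gives
\[
\langle u, T_A^\ell u\rangle \;\le\; \beta_1(A)^{\ell}\,\|u\|^2 \;=\; \pi(A)\,\beta_1(A)^\ell.
\]

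The heart of the proof is bounding $\beta_1(A) \le 1 - \eta(G)(1-\pi(A))$. Any $v \in \mathbb{R}^A$ may be extended by zero to $\tilde v \in \mathbb{R}^V$, and then $\langle T_A v,v\rangle = \langle T\tilde v,\tilde v\rangle$, so I work with vectors supported on $A$. The Perron eigenvector of $T$ corresponding to $\lambda_1=1$ is $\sqrt{\pi}$ (entries $\sqrt{\pi(x)}$), so decompose
\[
\tilde v \;=\; c\sqrt{\pi} + w, \qquad w\perp\sqrt{\pi}, \qquad c=\langle \tilde v,\sqrt{\pi}\rangle.
\]
Because $w$ is orthogonal to the top eigenvector, $\langle Tw,w\rangle \le (1-\eta(G))\|w\|^2$, while $\langle T\sqrt{\pi},\sqrt{\pi}\rangle=1$ and the cross term vanishes, giving
\[
\langle T\tilde v,\tilde v\rangle \;\le\; c^2 + (1-\eta(G))\|w\|^2 \;=\; \|\tilde v\|^2 - \eta(G)\bigl(\|\tilde v\|^2-c^2\bigr).
\]
Now the fact that $\tilde v$ is supported on $A$ enters via Cauchy--Schwarz: $c^2 = \bigl(\sum_{x\in A}\tilde v(x)\sqrt{\pi(x)}\bigr)^2 \le \|\tilde v\|^2\,\pi(A)$. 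Dividing through, the Rayleigh quotient is at most $1-\eta(G)(1-\pi(A))$, which yields the desired bound on $\beta_1(A)$. Combining with the Rayleigh bound on $\langle u, T_A^\ell u\rangle$ finishes the proof.

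The main obstacle is the third step: the support constraint $\mathrm{supp}(\tilde v)\subseteq A$ must be turned into the quantitative improvement $c^2/\|\tilde v\|^2 \le \pi(A)$ on the component along $\sqrt{\pi}$, so that the spectral gap $\eta(G)$ gets multiplied by the "room outside $A$", namely $1-\pi(A)$. Everything else is a routine use of reversibility, symmetrization, and the Rayleigh principle; the proof does not invoke any Cheeger-type estimate and does not use the maximum degree or $i(G)$.
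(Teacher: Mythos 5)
Your proof is correct. Note that the paper itself does not prove this statement at all: it is imported verbatim (in a ``version tailored for our needs'') from Theorem 5.4 of the Mossel--O'Donnell--Regev--Steif--Sudakov paper \cite{MORSS06}, so there is no internal argument to compare against. What you have written is a sound, self-contained derivation along the standard lines: the identity $\Pr[\text{walk stays in }A]=\pi_A^T P_A^{\ell}\mathbf{1}_A=\langle u,T_A^{\ell}u\rangle$ with $u=\sqrt{\pi}|_A$ is exactly right for the reversible chain $\pi(i)p_{i,j}=1/4e(G)$; laziness is used correctly and essentially (it makes $T$, hence the principal submatrix $T_A$, positive semidefinite, which is what legitimizes $\langle u,T_A^{\ell}u\rangle\le\beta_1(A)^{\ell}\|u\|^2$ --- without it a large negative eigenvalue of $T_A$ could spoil the bound for even $\ell$); and connectivity of $G$ is used correctly to ensure the eigenvalue $1$ of $T$ is simple with eigenvector $\sqrt{\pi}$, so that $w\perp\sqrt{\pi}$ gives $\langle Tw,w\rangle\le(1-\eta)\|w\|^2$. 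The key quantitative step, $c^2\le\pi(A)\|\tilde v\|^2$ via Cauchy--Schwarz over the support of $\tilde v$, is exactly the mechanism that converts the support restriction into the factor $1-\pi(A)$ multiplying the gap, and the edge case $A=V$ degenerates harmlessly to the trivial bound. I see no gap.
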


In order to bound the spectral gap of $G$, we use the celebrated result of Jerrum and Sinclair~\cite[Lemma 3.3]{JS89}, relating the spectral gap of $P(G)$ to its \emph{conductance} $\Phi(G)$, defined as
\[
	\Phi(G) = \min_{\substack{S \subseteq V \\ 0 < \pi(S) \le 1/2}} \frac{\sum_{i \in S, j \notin S} \pi(i) p_{i,j}}{\pi(S) }.
\]

\begin{lemma}[\cite{JS89}] \label{lemma:conductance}
	Let $G = (V, E)$ be a connected graph. Then the second eigenvalue $\lambda_2$ of the transition matrix $P$ satisfies:  $1-\lambda_2 \ge \Phi(G)^2/2$.
\end{lemma}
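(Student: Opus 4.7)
The plan is to follow the classical Cheeger-type argument for reversible Markov chains, combining the Rayleigh characterisation of $1-\lambda_2$ with a Cauchy--Schwarz and layer-cake trick. Since $P$ is reversible with respect to $\pi$, it is self-adjoint on $\ell^2(\pi)$, and one has
$$
1-\lambda_2 \;=\; \min_{f\,\not\equiv\,\mathrm{const}} \frac{\mathcal{E}(f,f)}{\mathrm{Var}_\pi(f)},
\qquad
\mathcal{E}(f,f) \;=\; \tfrac{1}{2}\sum_{i,j}\pi(i)\,p_{i,j}\,(f(i)-f(j))^2.
$$
First I would fix a minimiser $f$ and translate it by a $\pi$-median, so that both $\{f>0\}$ and $\{f<0\}$ have $\pi$-measure at most $1/2$. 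The pointwise inequality $(a_+-b_+)^2+(a_--b_-)^2\le (a-b)^2$ yields $\mathcal{E}(f_+,f_+)+\mathcal{E}(f_-,f_-)\le \mathcal{E}(f,f)$, and the disjointness of the supports of $f_+$ and $f_-$ together with $\mathrm{Var}_\pi(f)\le \sum_i\pi(i)f(i)^2$ gives $\mathrm{Var}_\pi(f)\le \sum_i\pi(i)f_+(i)^2+\sum_i\pi(i)f_-(i)^2$. The mediant inequality then produces at least one of $f_+$ or $f_-$, call it $g$, satisfying $\mathcal{E}(g,g)/\sum_i\pi(i)g(i)^2\le 1-\lambda_2$, and by construction $g\ge 0$ is supported on a set $S$ with $\pi(S)\le 1/2$. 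Thus it suffices to establish the ``edge inequality" $\mathcal{E}(g,g)\ge (\Phi(G)^2/2)\sum_i\pi(i)g(i)^2$ for every such nonnegative $g$.

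The heart of the argument is a two-step comparison. Cauchy--Schwarz with edge weights $\pi(i)p_{i,j}$ yields
$$
\Bigl(\sum_{i,j}\pi(i)p_{i,j}\,|g(i)^2-g(j)^2|\Bigr)^{2}
\,\le\,
\Bigl(\sum_{i,j}\pi(i)p_{i,j}(g(i)-g(j))^2\Bigr)\Bigl(\sum_{i,j}\pi(i)p_{i,j}(g(i)+g(j))^2\Bigr),
$$
where the first factor on the right equals $2\mathcal{E}(g,g)$, and, via $(x+y)^2\le 2(x^2+y^2)$ together with $\sum_j p_{i,j}=1$ and reversibility, the second factor is at most $4\sum_i\pi(i)g(i)^2$. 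Next, the layer-cake identity for nonnegative $g$ gives
$$
\sum_{i,j}\pi(i)p_{i,j}\,|g(i)^2-g(j)^2|\;=\;2\int_0^{\infty} Q(\{g^2>t\})\,dt,
$$
where $Q(A):=\sum_{i\in A,\,j\notin A}\pi(i)p_{i,j}$ is the edge-boundary measure and the factor $2$ comes from reversibility. Every level set $\{g^2>t\}$ sits in $S$ and so has $\pi$-measure at most $1/2$, whence $Q(\{g^2>t\})\ge \Phi(G)\,\pi(\{g^2>t\})$ pointwise in $t$; integrating and using $\sum_i\pi(i)g(i)^2=\int_0^\infty \pi(\{g^2>t\})\,dt$ produces
$$
\sum_{i,j}\pi(i)p_{i,j}\,|g(i)^2-g(j)^2|\;\ge\;2\Phi(G)\sum_i\pi(i)g(i)^2.
$$
Substituting both bounds into Cauchy--Schwarz and cancelling one factor of $\sum_i\pi(i)g(i)^2$ yields the desired edge inequality $\mathcal{E}(g,g)\ge (\Phi(G)^2/2)\sum_i\pi(i)g(i)^2$, and combining with the reduction above finishes the proof.

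The step I expect to demand the most care is the reduction from a signed eigenfunction to a nonnegative function supported on a set of $\pi$-measure at most $1/2$: one has to verify that neither the Dirichlet form nor the variance is spoiled when passing from $f$ to its positive or negative part, and this is exactly where the median (rather than the mean) is used in tandem with the convexity inequality for squared differences. The remaining ingredients --- Cauchy--Schwarz, the coarea identity and the conductance bound on level sets --- are essentially bookkeeping, with reversibility being the only chain-specific input, used to symmetrise the boundary measure $Q(\cdot)$ in the layer-cake calculation.
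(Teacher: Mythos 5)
The paper does not prove this lemma; it cites it directly as Lemma 3.3 of Jerrum and Sinclair~\cite{JS89}, so there is no in-paper proof to compare against. Your proposal is a correct and complete proof. The route you take --- the Rayleigh-quotient characterisation of $1-\lambda_2$, translation by a $\pi$-median, the inequality $\mathcal{E}(f_+,f_+)+\mathcal{E}(f_-,f_-)\le \mathcal{E}(f,f)$ together with the mediant trick to extract a nonnegative test function $g$ supported on a set of $\pi$-measure at most $1/2$, and then Cauchy--Schwarz against the coarea identity with $Q(\{g^2>t\})\ge \Phi(G)\,\pi(\{g^2>t\})$ on level sets --- is precisely the canonical discrete-Cheeger argument and matches the Jerrum--Sinclair derivation in substance. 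All the arithmetic checks out, including the factor bookkeeping: the first Cauchy--Schwarz factor is $2\mathcal{E}(g,g)$, the second is bounded by $4\sum_i\pi(i)g(i)^2$ via $\sum_j p_{i,j}=1$ and reversibility, and the coarea side gives $2\Phi(G)\sum_i\pi(i)g(i)^2$, producing exactly $\mathcal{E}(g,g)\ge \tfrac{1}{2}\Phi(G)^2\sum_i\pi(i)g(i)^2$. One small remark worth making explicit: when you say ``at least one of $f_+$ or $f_-$'' satisfies the mediant bound, you implicitly assume the corresponding $\sum_i\pi(i)g(i)^2$ is positive; this is harmless since after median-centering at least one of $f_+,f_-$ is nonzero, and the degenerate case where one of them vanishes just means you take the other. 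Overall this is the standard proof, correctly executed.
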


Finally, it is easy to verify that $\Phi(G)\ge i(G)/2d$, where as before $i(G)$ is the edge isoperimetric number of $G$, and $d$ is its maximum degree.

\begin{proof}[Proof of Lemma~\ref{lemma:random_walk_miss_U}]
Consider a non-empty subset $U \subset V(G)$. Theorem~\ref{thm:spectral} states that a lazy random walk never leaves the set $A = V(G) \setminus U$ (i.e., never visits $U$) with probability at most
	\begin{align}
		\pi(A)(1 - \eta(G)(1 - \pi(A)))^{\ell} &= (1 - \pi(U))(1 - \eta(G) \pi(U))^{\ell} \nonumber \\
		&\le \exp(- \eta(G) \ell \pi(U)) \le \exp\left(- \frac{i(G)^2}{8 d^2} \cdot \ell \pi(U)\right), \label{eq:U}
	\end{align}
	where in the last inequality we used Lemma~\ref{lemma:conductance} and $\Phi(G) \ge i(G)/2d$.  From the trivial bound $2e(G) \le d n$ and $\delta(G) \ge i(G)$ we get
	\[
		\pi(U) \ge \frac{|U| i(G)}{2e(G)} \ge |U| \frac{i(G)}{dn},
	\]		
	which after plugging into \eqref{eq:U} gives the desired probability that a random walk misses $U$.
\end{proof}

The next lemma implements our promise to utilize (extensions of) random walks as connected hitting sets.
\begin{lemma} \label{lemma:covering}
	For every $\beta>0$ there exist positive $C = \Theta(1/\beta^3)$ and $n_0$ such that the following holds. Let $G$ be a graph with $n \ge n_0$ vertices, maximum degree $d$, and $i(G) \ge \beta d$. Given $k$ and $s$ such that $ks \ge 2 n$, and subsets $U_1, \ldots, U_k \subseteq V(G)$, where each $U_j$ is of size $|U_j| \ge s$, there exists a connected set $Y \subseteq V(G)$ of size at most
	\[
		|Y| \le C \cdot \frac{n}{s} \ln \left( \frac{k s}{n} \right)
	\]
	which intersects every $U_j$.
\end{lemma}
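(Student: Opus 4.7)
The plan is to construct $Y$ in two phases, combining a random walk with a greedy patching step. Fix $C_0=\Theta(1/\beta^3)$, set $\ell=C_0(n/s)\ln(ks/n)$, and first perform a lazy random walk of length $\ell$ from the stationary distribution~$\pi$. Its trace $W$ is a connected subset of $V(G)$ with $|W|\le\ell+1$. Applying Lemma~\ref{lemma:random_walk_miss_U} to each $U_j$ and summing over $j$ gives $E[M]\le k(ks/n)^{-\beta^3 C_0/8}$ for the number $M$ of sets missed by $W$; choosing $C_0=8/\beta^3$ makes this at most $n/s$.

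In the second phase, starting from $Y:=W$, I would iteratively locate the closest still-unhit $U_j$ and attach a shortest $Y$--$U_j$ path. The key estimate is the cost per iteration. Converting edge expansion into vertex expansion, $i(G)\ge\beta d$ forces $|N_G(A)|\ge\beta|A|$ whenever $|A|\le n/2$, and symmetrically $|V\setminus B_1(A)|\le(1-\beta)|V\setminus A|$ whenever $|A|>n/2$; hence $B_t(Y)$ grows by a factor $1+\beta$ up to size $n/2$ and then its complement shrinks by factor $1-\beta$ per additional step. So at the greedy iteration where $m$ sets remain unhit, the union of unhit sets has size at least $\min(ms,n/2)$ in the pessimistic disjoint case, making the shortest path to the nearest unhit target of length at most $O(1/\beta+\ln(n/(ms))/\beta)$ (overlapping sets only decrease $M$ and hence help).

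Summing the iteration costs as $m$ runs from $M$ down to $1$ and using the Stirling-type estimate $\sum_{m=1}^t\ln(n/(ms))=O(t\ln(en/(st)))$, the greedy phase contributes $O(M/\beta+\min(M,n/s)\ln(en/(\min(M,n/s)\cdot s))/\beta)$ vertices. Taking expectations, using $E[M]\le n/s$ and Jensen's inequality on the concave map $x\mapsto x\ln(en/(sx))$, yields $E[\text{greedy cost}]=O(n/(s\beta))$. Combining both phases and using $\ln(ks/n)\ge\ln 2$ gives $E[|Y|]=O((n/s)\ln(ks/n)/\beta^3)$; some realisation attains the expectation and establishes the lemma with $C=\Theta(1/\beta^3)$.

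The main obstacle is the greedy bound: a naive estimate of $M\cdot\diam(G)$ introduces a stray $\log n$ factor that wrecks the target when $ks/n$ is close to $2$. Circumventing it requires both (i) the two-sided vertex expansion past the $n/2$ mark, giving the finer per-iteration cost $O(\ln(n/(ms))/\beta)$, and (ii) a Stirling telescoping that collapses the sum of these logarithms to $O(n/s)$, independent of $M$ as long as $E[M]=O(n/s)$.
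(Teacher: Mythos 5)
Your first phase is exactly the paper's: take a lazy random walk of length $\ell=\Theta(\beta^{-3})\cdot\frac{n}{s}\ln(\frac{ks}{n})$ from stationarity, and observe via Lemma~\ref{lemma:random_walk_miss_U} that the expected number of $U_j$'s missed is at most $n/s$. The divergence, and the gap, is in the patching phase.

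Your per-iteration estimate for the greedy step does not hold. You claim that when $m$ sets remain unhit, the nearest unhit target is at distance $O\bigl(\frac{1}{\beta}+\frac{1}{\beta}\ln\frac{n}{ms}\bigr)$ from $Y$, on the grounds that the union of the unhit sets has size at least $\min(ms,n/2)$ ``in the pessimistic disjoint case.'' This is backwards: disjointness is the \emph{favourable} case, since it maximizes the union. If the unhit sets overlap heavily --- in the extreme, if several of them coincide --- the union can have size as small as $s$, independently of $m$, and yet $M$ (the number of unhit sets $W$ must eventually account for) is unchanged. The parenthetical ``overlapping sets only decrease $M$ and hence help'' does not repair this: $M$ counts sets missed by $W$, which is determined by the walk, not by how the $U_j$ overlap. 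A second, independent problem is that the ball-growing bound on $d(Y,\cdot)$ cannot avoid a $\log(n/|Y|)$ term (one ball must grow from $Y$, whose size is only $\Theta(\ell)$, to roughly $n/2$), and $\log(n/\ell)$ can be of order $\log s$; combined with $\log(n/s)$ from the other ball this gives back the $\log n$ diameter bound you were trying to beat. So the deterministic estimate collapses to $M\cdot O(\log n/\beta)$, which, as you yourself note, overshoots the target when $\log n\gg\beta^{-2}\ln(ks/n)$ --- precisely the regime needed in the application (there $s=\Theta(\sqrt n)$ and $\ln(ks/n)=\Theta(1)$).

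The fix, which is what the paper actually does, is to abandon deterministic ball growth from $Y$ altogether and instead bound the patching cost probabilistically per set: for each $j$, the distance $X_j=d(W,U_j)$ satisfies $\Pr[X_j\ge z]\le\exp\bigl(-\frac{\beta^3}{8}(1+\beta)^{z-1}\frac{s\ell}{n}\bigr)$, obtained by applying Lemma~\ref{lemma:random_walk_miss_U} to the blown-up set $B_{z-1}(U_j)$, whose size is at least $\min\{s(1+\beta)^{z-1},n/2\}$ by vertex expansion. Summing the tail gives $E[X_j]=O_\beta(n/(ks))$, whence $\sum_j E[X_j]=O_\beta(n/s)=O(\ell)$. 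This bound is valid whether you patch every $U_j$ (as the paper does) or only the unhit ones greedily, since the greedy cost is at most $\sum_{j:X_j>0}X_j\le\sum_j X_j$. Your greedy construction is fine; its analysis must go through $E[\sum_j X_j]$ rather than a deterministic size bound on the union of unhit sets.
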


We mention in passing that for many pairs $(k(n),s(n))$, by choosing subsets $U_i\subset [n]$, $|U_i|=s$, at random one can see that there is a familiy $\{U_i\}_{i=1}^k$,
whose covering number has order of magnitude $(n/s)\log(ks/n)$. Thus Lemma~\ref{lemma:covering} delivers a nearly optimal promise of the size of a hitting set, with an additional -- and potentially important -- benefit of this set spanning a connected subgraph in $G$.

\begin{proof}
	Let
	\[
		\ell = \frac{8}{\beta^3} \cdot \frac{n}{s} \ln \left( \frac{ks}{n} \right)\,,
	\]		
	and consider a lazy random walk $W$ in $G$ which starts from the stationary distribution and makes $\ell$ steps. The desired connected subset $Y$ is now constructed by taking the union of $W$  with a shortest path between $U_j$ and $W$, for each $j \in [k]$. We argue that with positive probability $Y$ is of required size.

For $1\le j\le k$, let $X_j$ be the random variable measuring the distance from $U_j$ to $W$ in $G$. Then the set $Y$ has expected size at most $\ell+\sum_{j=1}^k E[X_j]$.

In order to estimate the expectation of $X_j$, for a positive integer $z$ write $p_{j,z}=Pr[X_j=z]$ and $p_{j,\ge z}=Pr[X_j\ge z]$. Trivially $p_{j,z}=p_{j,\ge z}-p_{j,\ge z+1}$. Hence
$$
E[X_j]=\sum_{z\ge 1} zp_{j,z}=\sum_{z\ge 1} z\left(p_{j,\ge z}-p_{j,\ge z+1}\right)\le \sum_{z\ge 1}p_{j,\ge z}\,.
$$

Let $U_{j,z}$ be the set of vertices of $G$ at distance at most $z$ from $U_j$. The graph $G$ is easily seen to be a $\beta$-expander, and similarly to Proposition~\ref{p1} we have: $|U_{j,z}|\ge \min\left\{s(1+\beta)^z,\frac{n}{2}\right\}$. Applying Lemma~\ref{lemma:random_walk_miss_U} to $U_{j,z}$, we obtain for $z\le \log(n/2)/\log(1+\beta)$:
$$
p_{j,\ge z}\le \exp\left\{-\frac{\beta^3(1+\beta)^zs\ell}{8n}\right\}\,.
$$
Plugging in this estimate, recalling the value of $\ell$, and doing fairly straightforward arithmetic, we can derive:
$$
E[X_j]= \sum_{z\ge 1}p_{j,\ge z}=O\left(\left(\frac{n}{ks}\right)^{1+\beta}\right)\,.
$$
Hence
$$
E[|Y|]=\ell+\sum_{j=1}^k E[X_j]=\ell+O\left(k\left(\frac{n}{ks}\right)^{1+\beta}\right)=O(\ell)\,.
$$
\end{proof}

We are now ready to complete the proof of Theorem~\ref{t-minors2}. Let $C$ be a constant given by Lemma~\ref{lemma:covering} with $\beta=\alpha/4$, and set
	\[
		b = \sqrt{\frac{8 C}{\alpha} \cdot n} \qquad \text{ and } \qquad k = \frac{\alpha n}{6b}=\Theta(\alpha^3)\sqrt{n}\,.
	\]

The argument here is somewhat similar to the proof of Theorem~\ref{t-minors1}. We maintain and update a partition $V(G)=A\cup B\cup C$ so that always $e_G(A,C)< \beta d|A|$. There is also a set of indices $I_0\subseteq [k]$ (corresponding to the set of supernodes $B_i,i\in I_0$), such that the subsets $B_i\subset B$ are pairwise disjoint, span connected subgraphs in $G$, and $|B_i|=b$. Moreover, if $i_1\ne i_2\in I_0$, then $G$ has an edge between $B_{i_1}$ and $B_{i_2}$.

Initially $A=B=\emptyset$, $C=V$, $I_0=\emptyset$. As long as $|A|\le n/3$ or $I_0\subsetneq [k]$, we repeat the following loop. If there exists $i\in I_0$ such that $e_G(B_i,C)< \beta d|B_i|$, we move $B_i$ to $A$ and update: $A:=A\cup B_i$, $B:=B\setminus B_i$, $I_0:=I_0\setminus\{i\}$. Otherwise, recalling that $\Delta(G)\le d$, we derive that $|N_G(B_i,C)|\ge \beta b$ for all $i\in I_0$.

Now look into the induced subgraph $G[C]$. If there is a subset $U\subset C$, $|U|\le |C|/2$ with $e_G(U,C\setminus U)<\beta d|U|$, we move $U$ to $A$ and update $A:=A\cup U$, $C:=C\setminus U$. Otherwise, the subgraph $G[C]$ satisfies $i(G[C])\ge \beta d$. We now apply to it  Lemma~\ref{lemma:covering} with $s=\beta b$ and the sets $U_j=N_G(B_j,C)$, $j\in I_0$ (adding dummy sets $U_j$ to get $k'$ sets $U_j$ altogether, all of size at least $s$, so that $2n<k's<3n$) to get a connected set $Y$ of size
$|Y| \le C\frac{n}{s}\ln 3<b$. Finally we extend  $Y$ to a connected set $Y'$ of exactly $b$ vertices in $C$, choose an arbitrary $i\in [k]\setminus I_0$ and update: $B:=B\cup Y'$, $C:=C\setminus Y'$, $B_i:= Y$, $I_0:=I_0\cup\{i\}$.

If the procedure terminated due to $I_0 = [k]$, then we have found the required $K_k$-minor. Suppose that this is not the case. We have $|B|<bk=\alpha n/6$. In the last step the set $A$ has become larger than $n/3$, but still $|A| \le 2n/3$ . This implies $\min(|A|, |V \setminus A|) \ge n/3$, and hence $e_G(A, V \setminus A) \ge i(G)n / 3\ge \alpha nd/3$. On the other hand, $e_G(A,B)\le d|B|\le \alpha nd/6$, and $e_G(A,C)<\alpha d/4\cdot |A|\le \alpha dn/6$. Altogether $e_G(A,V\setminus A)<e_G(A,B)+e_G(A,C)< \alpha nd/6+\alpha nd/6=\alpha nd/3$. The obtained contradiction shows that the algorithm always runs to a successful end, outputting a minor of $K_k$ in $G$.
\end{proof}

Let us mention that \cite{KN19} presents a result about large complete minors in expanding graph with growing degrees. Since the result's statement is somewhat involved and assumes stronger edge expansion of small sets, we decided not to state it here, referring the interested reader to \cite{KN19} for details.  

\subsection{Large minors in random graphs}\label{ss-minors3}
We conclude this section by presenting consequences of the above results for random graphs.

As we argued in Section~\ref{s-subgraphs} (see Corollary~\ref{cor-sparse} there), a supercritical random graph $G\sim G(n,c/n)$, $c>1$, typically contains an expanding subgraph $G^*$ of bounded degree on linearly many vertices. Applying Corollary~\ref{cor-minors} and  Theorem~\ref{t-minors2} to $G^*$, we derive the following corollaries.

\begin{coroll}\label{cor-minors2}
For every $c>1$ there exists $\delta>0$ such that a random graph $G\sim G(n,c/n)$ is with high probability minor universal for the class of all graphs $H$ with at most $\delta n/\log n$ vertices and edges.
\end{coroll}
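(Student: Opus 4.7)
The plan is to combine Corollary~\ref{cor-sparse} with Corollary~\ref{cor-minors} in a direct way. First I would invoke Corollary~\ref{cor-sparse}: since $c>1$, writing $c=1+\epsilon$, it guarantees the existence of constants $\alpha=\alpha(c)>0$ and $\beta=\beta(c)>0$ such that $G\sim G(n,c/n)$ contains with high probability an induced bounded-degree $\alpha$-expander $G^*$ on $m\ge \beta n$ vertices. We may condition on this likely event.

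Next I would apply Corollary~\ref{cor-minors} to $G^*$. Since $G^*$ is an $\alpha$-expander on $m$ vertices, there is a constant $c'=c'(\alpha)>0$ such that $G^*$ is minor universal for the class of all graphs $H$ with at most $c'm/\log m$ vertices and edges. In particular, using $m\ge \beta n$ and the monotonicity of $x/\log x$ for large $x$, there is a further constant $\delta=\delta(c)>0$ (depending only on $c$ through $\alpha$ and $\beta$) such that
\[
\frac{c'm}{\log m} \;\ge\; \frac{c'\beta n}{\log n} \;\ge\; \frac{\delta n}{\log n}
\]
for all $n$ sufficiently large. Hence any graph $H$ with at most $\delta n/\log n$ vertices and edges is a minor of $G^*$, and therefore a minor of $G$, which is exactly the conclusion of the corollary.

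There is essentially no obstacle here: everything is built out of the two results already proved in the survey, and the only minor bookkeeping is to verify that the minor-universality threshold $c'm/\log m$ for the induced expander translates into a threshold of the same asymptotic form in $n$. This is immediate because $m=\Theta(n)$, so $\log m=\log n-O(1)$ and the ratio $c'm/\log m$ is itself $\Theta(n/\log n)$. If one wants an explicit dependence of $\delta$ on $c$, one can trace it through Corollary~\ref{cor-sparse} (which gives $\alpha,\beta$ as explicit functions of $\epsilon$) and Theorem~\ref{t-minors1} (whose constant depends quadratically on $\alpha$), but for the qualitative statement of Corollary~\ref{cor-minors2} as written, the two-step reduction above suffices.
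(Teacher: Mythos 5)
Your proposal matches the paper's own argument exactly: invoke Corollary~\ref{cor-sparse} to extract a linearly sized bounded-degree induced $\alpha$-expander $G^*$ in $G(n,c/n)$ with high probability, then apply Corollary~\ref{cor-minors} to $G^*$ and absorb the constants (using $m=\Theta(n)$, hence $m/\log m = \Theta(n/\log n)$). The bookkeeping with $\delta$ is handled correctly, so nothing further is needed.
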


\begin{coroll}\label{cor-minors3}
For every $c>1$ there exists $\delta>0$ such that a random graph $G\sim G(n,c/n)$ contains with high probability a minor of the complete graph on $\lfloor\delta\sqrt{n}\rfloor$ vertices.
\end{coroll}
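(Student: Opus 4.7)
The natural plan is to reduce the statement to the deterministic Theorem~\ref{t-minors2} by extracting, in a typical random graph $G \sim G(n,c/n)$, a bounded-degree expander on a linear number of vertices and then feeding it into Theorem~\ref{t-minors2}.

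First, I would invoke Corollary~\ref{cor-sparse} with $\epsilon = c-1 > 0$: it provides constants $\alpha,\beta > 0$ (depending only on $c$) such that whp the random graph $G$ contains an induced subgraph $G^* = (V^*, E^*)$ on $|V^*| \ge \beta n$ vertices which is an $\alpha$-expander of bounded maximum degree $\Delta \le D = D(c)$. This is exactly the step designed to translate ``$G$ is sparse random'' into ``$G$ has a large, well-behaved deterministic expander inside''.

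Next, I would verify that $G^*$ meets the hypothesis of Theorem~\ref{t-minors2}. By $\alpha$-expansion, for every $U \subset V^*$ with $|U| \le |V^*|/2$ we have $|N_{G^*}(U)| \ge \alpha |U|$, and since every vertex of $N_{G^*}(U)$ contributes at least one edge to the cut, $e_{G^*}(U, V^* \setminus U) \ge \alpha |U|$. Thus the edge isoperimetric number satisfies $i(G^*) \ge \alpha \ge (\alpha/D)\cdot \Delta(G^*)$, so Theorem~\ref{t-minors2} applies to $G^*$ with parameter $\alpha' = \alpha/D$. It therefore produces a $K_k$-minor in $G^*$ with $k \ge c'\sqrt{|V^*|} \ge c'\sqrt{\beta n}$, for a constant $c' = c'(\alpha,D) > 0$. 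Setting $\delta = c'\sqrt{\beta}$ gives the desired $K_{\lfloor \delta\sqrt{n}\rfloor}$-minor in $G^* \subseteq G$.

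There is essentially no new obstacle: the heavy lifting is done by Corollary~\ref{cor-sparse} (which itself builds on Theorem~\ref{t-sparse} together with the standard giant-component facts in $G(n,c/n)$) and by Theorem~\ref{t-minors2}. The only point that requires a moment of care is the passage from the vertex-expansion hypothesis of Definition~\ref{def-expander} to the edge-isoperimetric hypothesis of Theorem~\ref{t-minors2}; this is handled in one line using $i(G^*) \ge \alpha$ and the boundedness of $\Delta(G^*)$, as above. Once this is checked, the minor bound $k = \Theta(\sqrt{\beta n}) = \Omega(\sqrt{n})$ follows immediately and yields the required $\delta = \delta(c) > 0$.
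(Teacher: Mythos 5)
Your argument is correct and follows essentially the same route as the paper: extract a linear-size bounded-degree $\alpha$-expander $G^*$ via Corollary~\ref{cor-sparse}, then apply Theorem~\ref{t-minors2} to $G^*$. The paper states this in one line; you usefully make explicit the small bridging step $i(G^*)\ge\alpha\ge(\alpha/D)\Delta(G^*)$ needed to satisfy the edge-isoperimetric hypothesis, which the paper leaves implicit.
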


Both corollaries provide an asymptotically optimal order of magnitude. For Corollary~\ref{cor-minors2}, notice that  $G\sim G(n,c/n)$ contains typically $o(n/\log n)$ cycles of length at most $c_0\log n$, for some $c_0=c_0(c)>0$ small enough (straightforward first moment argument), and thus we can repeat the argument from Section~\ref{ss-minors1}. For Corollary ~\ref{cor-minors3}, observe that $G\sim G(n,c/n)$ has typically $O(n)$ edges.

Corollary~\ref{cor-minors3} reproves a result of Fountoulakis,  K\"uhn and Osthus~\cite{FKO08}, obtained by applying direct ad hoc methods (and by working quite hard one may add).

Finally, recall that for $d\ge 3$ a random $d$-regular graph $G_{n,d}$ is an $\alpha$-expander for $\alpha=\alpha(d)>0$ (see Section
\ref{s-examples} for discussion). We thus obtain:

\begin{coroll}\label{cor-minors4}
For every integer $d\ge 3$ there exists $\delta>0$ such that a random graph $G\sim G_{n,d}$ contains with high probability a minor of the complete graph on $\lfloor\delta\sqrt{n}\rfloor$ vertices.
\end{coroll}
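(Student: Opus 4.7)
The plan is to apply Theorem~\ref{t-minors2} directly to $G \sim G_{n,d}$. Two ingredients are required: a bound on the maximum degree, which is trivially $d$ since $G$ is $d$-regular; and a lower bound on the edge isoperimetric number of the form $i(G) \ge \alpha d$ for some constant $\alpha = \alpha(d) > 0$. Everything hinges on extracting the second ingredient from the known expansion properties of $G_{n,d}$.

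For the isoperimetric bound, I would invoke the fact recalled in Section~\ref{s-examples} that a random $d$-regular graph with $d \ge 3$ is \whp an $\alpha_0$-expander for some $\alpha_0 = \alpha_0(d) > 0$; this can be justified either by Bollob\'as's configuration model computation~\cite{Bol88}, or via Proposition~\ref{p-eigenvalues} combined with the spectral bound of~\cite{Fri08}, which gives $\lambda_2(G) \le 2\sqrt{d-1} + o(1) < d$ \whp. Given $\alpha_0$-expansion, for every $U \subset V(G)$ with $|U| \le n/2$ we have
\[
e_G(U, V\setminus U) \;\ge\; |N_G(U)| \;\ge\; \alpha_0 |U|,
\]
since each vertex of $N_G(U)$ contributes at least one crossing edge. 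Hence $i(G) \ge \alpha_0 = (\alpha_0/d)\cdot d$, and setting $\alpha := \alpha_0/d > 0$ shows that the hypothesis $i(G) \ge \alpha d$ of Theorem~\ref{t-minors2} is satisfied \whp.

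Theorem~\ref{t-minors2} then yields a constant $c = c(\alpha) = c(d) > 0$ such that $ccl(G) \ge c\sqrt{n}$ \whp, and choosing $\delta := c$ closes the argument. There is no real substantive obstacle to carrying out this plan: the deduction is a one-shot composition of two results already in hand. The only point worth verifying is that the translation from vertex expansion to the $\alpha d$ form of edge expansion goes through cleanly, which it does for regular graphs essentially for free; by contrast, the non-regular setting of Corollary~\ref{cor-minors3} required the additional preparation via Corollary~\ref{cor-sparse} to also cap the maximum degree of the relevant subgraph before feeding it into Theorem~\ref{t-minors2}.
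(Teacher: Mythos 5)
Your argument matches the paper's intended (implicit) derivation: recall that $G_{n,d}$ is \whp a bounded-degree $\alpha_0$-expander (via Bollob\'as~\cite{Bol88} or Friedman~\cite{Fri08} with Proposition~\ref{p-eigenvalues}), convert vertex expansion to the required $i(G)\ge \alpha d$ bound with $\alpha=\alpha_0/d$, and invoke Theorem~\ref{t-minors2}. The translation from $|N_G(U)|\ge\alpha_0|U|$ to $e_G(U,V\setminus U)\ge\alpha_0|U|$ is exactly the right step for a regular graph, and since $d$ is a fixed constant the loss of the factor $d$ is harmless; this is the same composition the paper relies on.
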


This has been proven in another paper of Fountoulakis,  K\"uhn and Osthus~\cite{FKO09}, through the use of the so-called configuration model, and a quite substantial contiguity result about random graphs. The argument presented here provides an alternative, and perhaps more conceptual, way to derive this result.

\bigskip

\noindent{\bf Acknowledgement.} The author wishes to thank the anonymous referee for their careful reading and helpful remarks. He is also very grateful to Rajko Nenadov for his cooperation in parts of the research presented in this survey. Finally, the author thanks Limor Friedman, Rajko Nenadov and Wojciech Samotij for  their input and remarks.

\myaddress

\end{document}